\def\aa{{d}}
\def\bb{B}
\def\ba{\frak{B}}
\def\bm{{B}}
\def\bbm{\mathbb{B}}
\def\cc{C}
\def\eett{\eta}
\def\kk{k_1,\ldots,k_r}
\def\km{-k_1,\ldots,-k_r}
\def\mm{m_1,\ldots,m_r}
\def\ppp{\widetilde{P}}
\def\Li{{\rm Li}}
\long\def\comment#1{}
\def\re{{\rm Re}}
\font\fivecy=wncyr5  \def\sa{{\hbox{\fivecy X}}}
\DeclareSymbolFont{cyss}{OT2}{wncyss}{m}{n}
\DeclareMathSymbol{\sh}{\mathbin}{cyss}{`x}
\newtheorem{theorem}{Theorem}[section]
\newtheorem{lemma}[theorem]{Lemma}
\newtheorem{proposition}[theorem]{Proposition}
\newtheorem{corollary}[theorem]{Corollary}
\theoremstyle{definition}
\newtheorem{definition}[theorem]{Definition}
\newtheorem{example}[theorem]{Example}
\newtheorem{remark}[theorem]{Remark}
\DeclareSymbolFont{cyss}{OT2}{wncyss}{m}{n}
\DeclareMathSymbol{\sh}{\mathbin}{cyss}{`x}
\begin{document}

\title{Multi-poly-Bernoulli numbers and related zeta functions}

\author{Masanobu Kaneko\footnote{This work was supported by Japan Society for the Promotion of Science, 
Grant-in-Aid for Scientific Research (B) 23340010.}\, and Hirofumi Tsumura\footnote{This work was also supported by Japan Society for the Promotion of Science, 
Grant-in-Aid for Scientific Research (C) 15K04788.}}

\date{}

\maketitle

\begin{abstract}
  We construct and study a certain zeta function which interpolates multi-poly-Bernoulli numbers at non-positive integers
  and whose values at positive integers are linear combinations of multiple zeta values. 
  This function can be regarded as the one to be paired up with the $\xi$-function defined by Arakawa and the first-named author. 
  We show that both are closely related to the multiple zeta functions.  Further we define multi-indexed poly-Bernoulli numbers, and 
  generalize the duality formulas for poly-Bernoulli numbers by introducing more general zeta functions. 
\end{abstract}

\section{Introduction} \label{sec-1}

In this paper, we investigate the function defined by 
\begin{equation}
\eta(\kk;s)=\frac{1}{\Gamma(s)}\int_{0}^\infty t^{s-1}\frac{\Li_{\kk}(1-e^t)}{1-e^t}\,dt \label{etadef}
\end{equation}
and its generalizations, in connection with multi-poly-Bernoulli numbers and multiple zeta values
 (we shall give the precise definitions later in \S2).   
This function can be viewed as a twin sibling of the function $\xi(\kk;s)$,
\begin{equation}
\xi(\kk;s)=\frac{1}{\Gamma(s)}\int_0^\infty {t^{s-1}}\frac{\Li_{\kk}(1-e^{-t})}{e^t-1}\,dt,  \label{xidef}
\end{equation}
which was introduced and studied in \cite{AK1999}. 
The present paper may constitute a natural continuation of the work \cite{AK1999}. 

To explain our results in some detail, we first give an overview of the necessary background.
For an integer $k\in \mathbb{Z}$, two types of poly-Bernoulli numbers 
$\{\bb_n^{(k)}\}$ and $\{\cc_n^{(k)}\}$ are defined  as follows
(see Kaneko \cite{Kaneko1997} and Arakawa-Kaneko \cite{AK1999}, also Arakawa-Ibukiyama-Kaneko \cite{AIK2014}):
\begin{align}
&\frac{{\rm Li}_{k}(1-e^{-t})}{1-e^{-t}}=\sum_{n=0}^\infty \bb_n^{(k)}\frac{t^n}{n!},  \label{1-1}\\
&\frac{{\rm Li}_{k}(1-e^{-t})}{e^t-1}=\sum_{n=0}^\infty \cc_n^{(k)}\frac{t^n}{n!},  \label{1-2}
\end{align}
where ${\rm Li}_{k}(z)$ is the polylogarithm function defined by
\begin{equation}
{\rm Li}_{k}(z)=\sum_{m=1}^\infty \frac{z^m}{m^k}\quad (|z|<1). \label{1-3}
\end{equation}
Since ${\rm Li}_1(z)=-\log(1-z)$, we see that $\bb_n^{(1)}$ (resp. $C_n^{(1)}$) coincides with the ordinary Bernoulli number $B_n$ defined by
$$\frac{te^t}{e^t-1}=\sum_{n=0}^\infty B_n\frac{t^n}{n!}\qquad \biggl(\text{resp. }
\frac{t}{e^t-1}=\sum_{n=0}^\infty B_n\frac{t^n}{n!}\,\biggr)\,.$$
A number of formulas, including closed formulas of $\bb_n^{(k)}$  and $C_n^{(k)}$
in terms of the Stirling numbers of the second kind as well as the duality formulas
\begin{align}
& \bb_n^{(-k)}=\bb_{k}^{(-n)}, \label{1-4}\\
& \cc_n^{(-k-1)}=\cc_{k}^{(-n-1)} \label{1-5}
\end{align}
that hold for $k,n\in \mathbb{Z}_{\geq 0}$, have been established
(see \cite[Theorems\ 1\ and\ 2]{Kaneko1997} and \cite[\S\,2]{Kaneko-Mem}).
We also mention that Brewbaker \cite{Brew2008} gave a purely combinatorial interpretation of
the number $\bb_{n}^{(-k)}$ of negative upper index as the number of `Lonesum-matrices' with $n$ rows and $k$ columns.

A multiple version of $\bb_{n}^{(k)}$ is defined in \cite[p.\,202, Remarks\,(ii)]{AK1999} by
\begin{align}
&\frac{{\rm Li}_{k_1,\ldots,k_r}(1-e^{-t})}{(1-e^{-t})^r}=\sum_{n=0}^\infty \mathbb{B}_n^{(\kk)}\frac{t^n}{n!}\quad (\kk\in \mathbb{Z}),  \label{1-5-2}
\end{align}
where 
\begin{equation}
{\rm Li}_{k_1,\ldots,k_r}(z)=\sum_{1\leq m_1<\cdots<m_r}\frac{z^{m_r}}{m_1^{k_1}m_2^{k_2}\cdots m_r^{k_r}}  \label{1-7}
\end{equation}
is the multiple polylogarithm. 
Hamahata and Masubuchi \cite{HM2007-1,HM2007-2} investigated some properties of $\mathbb{B}_n^{(\kk)}$, 
and gave several generalizations of the known results in the single-index case. 
Based on this research, Bayad and Hamahata \cite{BH2012} further studied these numbers. 
Furusho \cite[p.\,269]{Fu2004} also refers to \eqref{1-5-2}. 

More recently, Imatomi, Takeda and the first-named author \cite{IKT2014} defined and studied another type 
of multi-poly-Bernoulli numbers given by
\begin{align}
&\frac{{\rm Li}_{k_1,\ldots,k_r}(1-e^{-t})}{1-e^{-t}}=\sum_{n=0}^\infty \bb_n^{(\kk)}\frac{t^n}{n!},  \label{1-6}\\
&\frac{{\rm Li}_{k_1,\ldots,k_r}(1-e^{-t})}{e^{t}-1}=\sum_{n=0}^\infty \cc_n^{(\kk)}\frac{t^n}{n!}  \label{1-6-2}
\end{align}
for $\kk\in \mathbb{Z}$. 
They proved several formulas for $\bb_n^{(\kk)}$ and $\cc_n^{(\kk)}$, 
and further gave an important relation between $\cc_{p-2}^{(\kk)}$ and the `finite multiple zeta value', that is, 
\begin{equation}
\sum_{1\leq m_1<\cdots <m_r<p}\frac{1}{m_1^{k_1}\cdots m_r^{k_r}} \equiv -C_{p-2}^{(k_1,\ldots,k_{r-1},k_r-1)} \mod p \label{1-8}
\end{equation}
for any prime number $p$.

The function \eqref{xidef} for $\kk \in \mathbb{Z}_{\geq 1}$ can be analytically continued to an entire function
of the complex variable $s\in\mathbb{C}$ (\cite[Sections $3$ and $4$]{AK1999}). 
The particular case $r=k=1$ gives 
$\xi(1;s)=s\zeta(s+1)$. Hence $\xi(\kk;s)$  can be regarded as a multi-indexed zeta function. It is shown in \cite{AK1999} that
the values at non-positive integers of $\xi(k;s)$ interpolate poly-Bernoulli numbers $\cc_m^{(k)}$, 
\begin{equation}
\xi(k;-m)=(-1)^m \cc_m^{(k)}  \label{1-10}
\end{equation}
for $k \in \mathbb{Z}_{\geq 1}$ and $m\in \mathbb{Z}_{\geq 0}$. And also by investigating $\xi(\kk;s)$ and its values at
positive integer arguments, one produces many relations among multiple zeta values defined by
\begin{equation}
\zeta(l_1,\ldots,l_r)=\sum_{1\leq m_1<\cdots<m_r}\frac{1}{m_1^{l_1}\cdots m_r^{l_r}}\ \left(\,=\,\Li_{l_1,\ldots,l_r}(1)\right)\label{MZV}
\end{equation}
for $l_1,\ldots,l_r\in \mathbb{Z}_{\geq 1}$ with $l_r\geq 2$ 
(\cite[Corollary 11]{AK1999}). 

Recently, further properties of $\xi(\kk;s)$ and related results have been given by several authors (see, for example, Bayad-Hamahata \cite{BH2011-1,BH2011-2}, Coppo-Candelpergher \cite{CC2010}, Sasaki \cite{Sasaki2012}, and
Young \cite{Yo}).

In this paper, we conduct a basic study of the function \eqref{etadef} and relate it to the multi-poly-Bernoulli numbers
$\bb_n^{(\kk)}$ as well as multiple zeta (or `zeta-star') values.  Note that the only difference in 
both definitions \eqref{etadef} and \eqref{xidef} is, up to sign, 
the arguments $1-e^t$ and $1-e^{-t}$ of $\Li_{\kk}(z)$ in the integrands.  
One sees in the main body of the paper a remarkable contrast between `$B$-type' poly-Bernoulli numbers and those 
of `$C$-type', and between multiple zeta and zeta-star values.  
We further investigate the case of non-positive indices $k_i$ in connection with
a yet more generalized `multi-indexed' poly-Bernoulli number.

The paper is organized as follows.  In \S2, we give the analytic continuation of $\eta(\kk;s)$ in the case of positive indices, and 
formulas for values at integer arguments (Theorems \ref{Th-Main-1} and \ref{T-3-7}).  In \S3, we study relations between two 
functions $\eta(\kk;s)$ and $\xi(\kk;s)$ (Proposition \ref{etaxi}), 
as well as relations with the single variable multiple zeta function (Definition \ref{mzfct} and Theorem \ref{xibyzeta}).
We turn in \S4 to the study of  $\eta(\kk;s)$ in the negative index case and give a certain duality formula for $B_m^{(\km)}$
(Definition \ref{Def-Main-2} and Theorems \ref{Th-Main-2} and \ref{Th-4-6}).
We carry forward the study of negative index case  in \S5 and define the `multi-indexed' poly-Bernoulli numbers  
$\{ \bb_{\mm}^{(\kk),(\aa)}\}$ for $(\kk) \in \mathbb{Z}^r$, $(\mm)\in \mathbb{Z}_{\geq 0}^r$ and $\aa\in \{1,\ldots,r\}$ 
(Definition \ref{Def-5-1}), 
which include \eqref{1-5-2} and \eqref{1-6} as special cases. We prove the `multi-indexed' duality formula for them in the case $\aa=r$ (Theorem \ref{T-5-8}).

\ 

\section{The function $\eta(\kk;s)$ for positive indices and its values at integers} \label{sec-2} 
\subsection{Analytic continuation and the values at non-positive integers}

We start with the definition in the case of positive indices.

\begin{definition}\label{Def-Main-1} 
For positive integers $k_1,\ldots,k_r\in \mathbb{Z}_{\geq 1}$, let
\begin{equation*}
\eta(\kk;s)=\frac{1}{\Gamma(s)}\int_{0}^\infty t^{s-1}\frac{\Li_{\kk}(1-e^t)}{1-e^t}dt \label{3-1}
\end{equation*}
for $s\in \mathbb{C}$ with ${\rm Re}(s)>1-r$, where $\Gamma(s)$ is the gamma function. When $r=1$, we often
denote $\eta(k;s)$ by $\eta_{k}(s)$. 
\end{definition}

The integral on the right-hand side converges absolutely in the domain ${\rm Re}(s)>1-r$, as
is seen from the following lemma.

\begin{lemma}\label{L-2-3} 
{\rm (i)}\ For $k_1,\ldots,k_r\in \mathbb{Z}_{\geq 1}$, the function $\Li_{\kk}(1-e^t)$ is holomorphic for 
$t\in \mathbb{C}$ with $|{\rm Im}(t)|<\pi$. 

\noindent
{\rm (ii)}\  For $k_1,\ldots,k_r\in \mathbb{Z}_{\geq 1}$ and $t\in \mathbb{R}_{>0}$, we have the estimates
\begin{equation}
\Li_{\kk}(1-e^t)=O\left(t^r\right)\qquad (t\to 0) \label{2-25}
\end{equation}
and
\begin{equation}
\Li_{\kk}(1-e^t)=O\left(t^{k_1+\cdots+k_r}\right)\qquad (t\to \infty). \label{2-3}
\end{equation}
\end{lemma}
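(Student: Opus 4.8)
The plan is to view $\Li_{\kk}$ as a genuine holomorphic function on a suitable cut plane and then to pull it back along the map $t\mapsto 1-e^t$. The basic tool for both parts is the pair of differential relations, obtained by differentiating the defining series \eqref{1-7} termwise on $|z|<1$:
\begin{equation*}
\frac{d}{dz}\Li_{\kk}(z)=\frac{1}{z}\,\Li_{k_1,\ldots,k_{r-1},k_r-1}(z)\ \ (k_r\ge 2),\qquad
\frac{d}{dz}\Li_{\kk}(z)=\frac{1}{1-z}\,\Li_{k_1,\ldots,k_{r-1}}(z)\ \ (k_r=1).
\end{equation*}

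For part (i), I would prove by induction on the weight $k_1+\cdots+k_r$ that $\Li_{\kk}$ continues holomorphically to the slit plane $D:=\mathbb{C}\setminus[1,\infty)$. The base case $\Li_1(z)=-\log(1-z)$ is holomorphic on $D$, and the inductive step writes $\Li_{\kk}(z)=\int_0^z\Li_{\ldots}(u)\,du/u$ (resp. $\int_0^z\Li_{\ldots}(u)\,du/(1-u)$): the integrand is holomorphic on the simply connected region $D$, because in the first case the numerator $\Li_{k_1,\ldots,k_r-1}(u)$ still vanishes to order $r$ at $u=0$ and so cancels the pole of $1/u$, while in the second case the pole of $1/(1-u)$ lies at $u=1\notin D$. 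Since the map $t\mapsto 1-e^t$ sends the strip $|\mathrm{Im}(t)|<\pi$ into $D$ — indeed $1-e^t\in[1,\infty)$ would force $e^t\in(-\infty,0]$, impossible in the open strip — the composite $\Li_{\kk}(1-e^t)$ is holomorphic there, which is (i).

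For part (ii), set $g_{\kk}(t):=\Li_{\kk}(1-e^t)$; the chain rule together with the relations above yields
\begin{equation*}
g_{\kk}'(t)=\frac{e^t}{e^t-1}\,g_{k_1,\ldots,k_{r-1},k_r-1}(t)\ \ (k_r\ge 2),\qquad
g_{\kk}'(t)=-\,g_{k_1,\ldots,k_{r-1}}(t)\ \ (k_r=1),
\end{equation*}
together with $g_{\kk}(0)=\Li_{\kk}(0)=0$. The estimate \eqref{2-25} as $t\to0$ is quickest from the series itself: the lowest-order term of $\Li_{\kk}(z)$ is $z^r/(1^{k_1}\cdots r^{k_r})$, so $\Li_{\kk}(z)=O(z^r)$, and $1-e^t=-t+O(t^2)$ gives the claim. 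The estimate \eqref{2-3} as $t\to\infty$ I would obtain by induction on the weight from the recursion: when $k_r=1$ one integrates a term of size $O(t^{\,k_1+\cdots+k_r-1})$, and when $k_r\ge2$ one uses that $e^t/(e^t-1)$ is bounded for $t\ge1$; in both cases $g_{\kk}'(t)=O(t^{\,k_1+\cdots+k_r-1})$, and integrating gives $g_{\kk}(t)=O(t^{\,k_1+\cdots+k_r})$. (The same recursion, with $e^t/(e^t-1)=O(1/t)$ near $0$, reproves \eqref{2-25} if a uniform treatment is preferred.)

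The main obstacle is part (i): one must pin down the correct branch and the domain of holomorphy of the multiple polylogarithm and verify the mapping property of $1-e^t$ precisely, since the argument $1-e^t$ leaves the unit disc immediately and runs off to $-\infty$ as $t\to\infty$. Once the slit-plane continuation and the inclusion of the strip into $D$ are in place, both estimates in (ii) follow as routine consequences of the explicit recursion for $g_{\kk}$.
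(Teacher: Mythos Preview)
Your argument is correct and essentially matches the paper's proof: both establish (i) by continuing $\Li_{\kk}$ to the slit plane $\mathbb{C}\setminus[1,\infty)$ via the iterated-integral recursion \eqref{2-1} and observing that $1-e^t$ avoids $[1,\infty)$ on the strip, read \eqref{2-25} off the Taylor expansion at $z=0$, and obtain \eqref{2-3} by induction on the weight from \eqref{2-1}. The only cosmetic difference is that the paper carries out the induction for \eqref{2-3} in integral form (substituting $u=1-e^v$ into $\int_0^{1-e^t}\Li_{\ldots}(u)\,du/u$ and splitting $[0,t]$ at a small $\varepsilon$), whereas you differentiate $g_{\kk}(t)=\Li_{\kk}(1-e^t)$ first and then integrate the bound on $g_{\kk}'$; the two are equivalent.
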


\begin{proof}  As is well-known, we can regard the function $\Li_{\kk}(z)$ as a single-valued
holomorphic function in the simply connected domain $\mathbb{C}\smallsetminus [1,\infty)$, via the
process of iterated integration starting with $\Li_1(z)=\int_0^z dz/(1-z)$. 
Noting that $1-e^t \in [1,\infty)$ is equivalent to ${\rm Im}(t)= (2j+1)\pi$ for some $j\in \mathbb{Z}$,
we have the assertion {\rm (i)}.  

The estimate \eqref{2-25} is clear from the definition of $\Li_{\kk}(z)$, because its Taylor series at 
$z=0$ starts with the term $z^r/1^{k_1}\cdots r^{k_r}$.  As for \eqref{2-3}, we proceed by
induction on the `weight' $k_1+\cdots + k_r$ as
follows by using the formula 
\begin{equation}
\frac{d}{dz}\Li_{\kk}(z)=
\begin{cases}
\frac{1}{z}\Li_{k_1,\ldots,k_{r-1},k_r-1}(z) & \ (k_r>1)\\
\frac{1}{1-z}\Li_{k_1,\ldots,k_{r-1}}(z) & \ (k_r=1),
\end{cases}
\label{2-1}
\end{equation}
which is easy to derive and is the basis of the analytic continuation of $\Li_{\kk}(z)$ mentioned above. 
If $r=k_1=1$, then we have $\Li_1(1-e^t)=-t$ and the desired estimate holds. 
Suppose the weight $k$ is larger than $1$ and the assertion holds for any weight less than $k$.
 If $k_r>1$, then by \eqref{2-1} we have
\begin{align*}
|\Li_{\kk}(1-e^t)|&=\bigg|\int_{0}^{1-e^t}\frac{\Li_{k_1,\ldots,k_r-1}(u)}{u}du\bigg|\\
&=\bigg| \int_{0}^{t}\frac{1}{1-e^v}\Li_{k_1,\ldots,k_r-1}(1-e^v)(-e^v)dv\bigg|\qquad (u:=1-e^v)\\
& \leq \int_{0}^{\varepsilon}\bigg|{e^v}\frac{\Li_{k_1,\ldots,k_r-1}(1-e^v)}{e^v-1}\bigg|dv+\int_{\varepsilon}^{t}\bigg|\frac{e^v}{e^v-1}{\Li_{k_1,\ldots,k_r-1}(1-e^v)}\bigg|dv
\end{align*}
for small $\varepsilon>0$. The former integral is $O(1)$ because the integrand is continuous on $[0,\varepsilon]$. On the other hand, by induction hypothesis, the integrand of the latter integral is $O\left( v^{k_1+\cdots+k_r-1}\right)$ as $v\to\infty$. 
Therefore the latter integral is  $O\left( t^{k_1+\cdots+k_r}\right)$ as $t\to\infty$.  The case of $k_r=1$ is similarly proved also by using \eqref{2-1},
and is omitted here.
\end{proof}

We now show that the function $\eta(\kk;s)$ can be analytically continued to an entire function, 
and interpolates multi-poly-Bernoulli numbers $\bb_{m}^{(\kk)}$ at non-positive 
integer arguments.  

\begin{theorem}\label{Th-Main-1} 
For positive integers $k_1,\ldots,k_r\in \mathbb{Z}_{\geq 1}$, the function
$\eta(\kk;s)$ can be analytically continued to an entire function on the whole complex plane.
And the values of $\eta(\kk;s)$ at non-positive integers are given by
\begin{equation}
\eta(\kk;-m)=\bb_{m}^{(\kk)}\quad (m\in \mathbb{Z}_{\geq 0}). \label{3-2}
\end{equation}
In particular, $\eta_k(-m)=\bb_{m}^{(k)}$ for $k\in \mathbb{Z}_{\geq 1}$ and $m\in \mathbb{Z}_{\geq 0}$. 
\end{theorem}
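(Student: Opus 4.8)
The plan is to treat $\Gamma(s)\eta(\kk;s)$ as a Mellin transform and use the two estimates of Lemma~\ref{L-2-3} to separate the behaviour at $t=0$ from the behaviour at $t=\infty$. Writing $f(t)=\Li_{\kk}(1-e^t)/(1-e^t)$, I would first observe that $f$ is obtained from the generating function in \eqref{1-6} by the substitution $t\mapsto -t$, so that near the origin
\begin{equation*}
f(t)=\sum_{n=0}^\infty (-1)^n\bb_n^{(\kk)}\frac{t^n}{n!},
\end{equation*}
a series whose radius of convergence is at least $\pi$ by Lemma~\ref{L-2-3}(i), in particular greater than $1$. By \eqref{2-25} this series actually begins at $n=r-1$, which is exactly what makes $t^{s-1}f(t)$ integrable near $0$ for $\re(s)>1-r$, while \eqref{2-3} together with the exponential decay of $1/(1-e^t)$ makes it integrable near $\infty$ for every $s$.

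Next I would split $\Gamma(s)\eta(\kk;s)=\int_0^1+\int_1^\infty$. The tail $\int_1^\infty t^{s-1}f(t)\,dt$ converges locally uniformly in $s$ on all of $\mathbb{C}$ — a dominating function $C\,t^{\re(s)-1}t^{k_1+\cdots+k_r}e^{-t}$ is available on each compact $s$-set — so by Morera's theorem it defines an entire function of $s$. For the head I would insert the power series and integrate term by term, the interchange being justified by the absolute convergence of $\sum_{n\ge r-1}|\bb_n^{(\kk)}|t^n/n!$ on $[0,1]$; this yields
\begin{equation*}
\int_0^1 t^{s-1}f(t)\,dt=\sum_{n\ge r-1}\frac{(-1)^n\bb_n^{(\kk)}}{n!}\,\frac{1}{s+n},
\end{equation*}
a meromorphic function on $\mathbb{C}$ whose only singularities are simple poles at $s=-n$ ($n\ge r-1$) with residue $(-1)^n\bb_n^{(\kk)}/n!$. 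Hence $\Gamma(s)\eta(\kk;s)$ continues to a meromorphic function whose poles lie only at certain non-positive integers.

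To obtain the analytic continuation of $\eta$ itself I would divide by $\Gamma(s)$ and use that $1/\Gamma(s)$ is entire with simple zeros precisely at $s=0,-1,-2,\dots$. Each pole of the bracketed expression at $s=-n$ is cancelled by the zero of $1/\Gamma$, and at the remaining non-positive integers $s=0,\dots,-(r-2)$ the bracket is already holomorphic, so $\eta(\kk;s)$ is entire. For the interpolation formula I would evaluate at $s=-m$ using $1/\Gamma(s)=(-1)^m m!\,(s+m)+O((s+m)^2)$ near $s=-m$: for $m\ge r-1$ the simple pole of residue $(-1)^m\bb_m^{(\kk)}/m!$ multiplied by this zero gives in the limit exactly $\bb_m^{(\kk)}$, while for $0\le m\le r-2$ both $\eta(\kk;-m)$ and $\bb_m^{(\kk)}$ vanish, so \eqref{3-2} holds in all cases.

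The routine parts are the convergence estimates and the entirety of the tail; the step requiring the most care is the term-by-term integration of the head together with the bookkeeping of poles and zeros, since it is the precise matching of the residue $(-1)^n\bb_n^{(\kk)}/n!$ against the zero of $1/\Gamma(s)$ that simultaneously forces $\eta$ to be entire and produces the interpolation values. An alternative I would keep in reserve is the Hankel-contour representation used for $\xi$ in \cite{AK1999}, but the term-by-term method seems more transparent here because it reads off the values $\eta(\kk;-m)$ directly from the Laurent coefficients.
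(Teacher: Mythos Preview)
Your argument is correct and complete. You take a different route from the paper: the paper uses exactly the Hankel-contour representation you mention as a reserve option, writing $H(\kk;s)=\int_{\mathcal{C}} t^{s-1}f(t)\,dt$ over the standard keyhole contour $\mathcal{C}$, observing that $H$ is entire by Lemma~\ref{L-2-3}, and then recovering $\eta(\kk;s)=H(\kk;s)/\bigl((e^{2\pi is}-1)\Gamma(s)\bigr)$; the values at $s=-m$ drop out from the residue on the small circle part of $\mathcal{C}$. Your split-at-$1$ method replaces the contour by the single observation that the Taylor coefficients of $f$ decay geometrically (radius of convergence $\ge\pi>1$), which makes the series $\sum_{n\ge r-1}(-1)^n\bb_n^{(\kk)}/\bigl(n!\,(s+n)\bigr)$ manifestly meromorphic and exposes the residues directly. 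The contour argument is slightly slicker in that it handles analytic continuation and the value at $-m$ in one stroke without appealing to the radius-of-convergence bound; your approach has the advantage of being entirely real-variable up to the final pole/zero bookkeeping, and of making the vanishing of $\bb_m^{(\kk)}$ for $m\le r-2$ visible as the absence of a pole. Both are standard techniques for this kind of Mellin-transform continuation.
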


\begin{proof}
In order to prove this theorem, we adopt here the method of contour integral representation (see, for example, \cite[Theorem 4.2]{Wa}). 
Let $\mathcal{C}$ be the standard contour, 
namely the path consisting of the positive real axis from the infinity to  (sufficiently small) $\varepsilon$ (`top side'), 
a counter clockwise circle $C_\varepsilon$ around the origin of radius $\varepsilon$, 
and the positive real axis from $\varepsilon$ to the infinity (`bottom side'). Let
\begin{align*}
H(\kk;s)&=\int_\mathcal{C}t^{s-1}\frac{\Li_{\kk}(1-e^t)}{1-e^t}dt\\
& =(e^{2\pi i s}-1)\int_{\varepsilon}^\infty t^{s-1}\frac{\Li_{\kk}(1-e^t)}{1-e^t}dt+\int_{C_\varepsilon}t^{s-1}
\frac{\Li_{\kk}(1-e^t)}{1-e^t}dt.
\end{align*}
It follows from Lemma \ref{L-2-3} that $H(\kk;s)$ is entire, 
because the integrand has no singularity on $\mathcal{C}$ and the contour integral is absolutely convergent for all $s\in \mathbb{C}$. 
Suppose ${\rm Re}(s)>1-r$. The last integral tends to $0$ as $\varepsilon \to 0$. Hence
$$\eta(\kk;s)=\frac{1}{(e^{2\pi i s}-1)\Gamma(s)}H(\kk;s),$$
which can be analytically continued to $\mathbb{C}$, and is entire. In fact $\eta(\kk;s)$ is holomorphic for ${\rm Re}(s)>0$, 
hence has no singularity at any positive integer. Set $s=-m\in \mathbb{Z}_{\leq 0}$. Then, by \eqref{1-6}, 
\begin{align*}
\eta(\kk;-m)& =\frac{(-1)^m m!}{2\pi i}H(\kk;-m)\\
& =\frac{(-1)^m m!}{2\pi i}\int_{C_\varepsilon}t^{-m-1}\sum_{n=0}^\infty \bb_n^{(\kk)}\frac{(-t)^n}{n!}dt=\bb_m^{(\kk)}.
\end{align*}
This completes the proof.
\end{proof}

\begin{remark}
Using the same method as above or the method used in \cite{AK1999}, we can establish the analytic continuation
of $\xi(\kk;s)$  to an entire function, and see that 
\begin{equation}
\xi(\kk;-m)=(-1)^m\cc_{m}^{(\kk)}\quad (m\in \mathbb{Z}_{\geq 0}) \label{3-3}
\end{equation}
for $\kk\in \mathbb{Z}_{\geq 1}$, which is a multiple version of \eqref{1-10}.
\end{remark}

\subsection{Values at positive integers}

About the values at positive integer arguments, we prove formulas for both $\xi(\kk;s)$ and
$\eta(\kk;s)$, for general index $(\kk)$.  These formulas generalize \cite[Theorem 9 (i)]{AK1999},
and  have remarkable similarity in that one obtains the formula for $\eta(\kk;s)$ just by replacing 
multiple zeta values in the one for $\xi(\kk;s)$ with multiple `zeta-star' values.
Recall the multiple zeta-star value is a real number defined by
\begin{equation}
\zeta^\star(l_1,\ldots,l_r)=\sum_{1\leq m_1\leq \cdots\leq m_r}\frac{1}{m_1^{l_1}\cdots m_r^{l_r}} \label{MZSV}
\end{equation}
for $l_1,\ldots,l_r\in \mathbb{Z}_{\geq 1}$ with $l_r\geq 2$. This was first studied (for general $r$)
by Hoffman in \cite{Ho}. 

To state our theorem, we further introduce some notation.  For an index set ${\bf k}=(\kk)\in\mathbb{Z}_{\ge1}^r$, 
put ${\bf k}_+=(k_1,\ldots,k_{r-1},k_r+1)$. 
The usual dual index of an {\it admissible}  index (i.e. the one that the last entry is greater than 
one) ${\bf k}$ is denoted by ${\bf k}^*$. For ${\bf j}=(j_1,\ldots,j_r)\in\mathbb{Z}_{\ge0}^r$, 
we write $\vert{\bf j}\vert=j_1+\cdots+j_r$ and call it the weight of ${\bf j}$, 
and $d({\bf j})=r$, the depth of ${\bf j}$. For two such indices ${\bf k}$ and ${\bf j}$ of the same depth, 
we denote by ${\bf k}+{\bf j}$ the index obtained by the component-wise addition,  
${\bf k}+{\bf j}=(k_1+j_1, \ldots, k_r+j_r)$, and by $b({\bf k};{\bf j})$ the quantity given by
\[   b({\bf k};{\bf j}):=\prod_{i=1}^r \binom{k_i+j_i-1}{j_i}. \]

\begin{theorem}\label{T-3-7}\ \ For any index set ${\bf k}=(\kk)\in\mathbb{Z}_{\ge1}^r$
and any $m\in \mathbb{Z}_{\geq 1}$, we have 
\begin{equation}\label{xivalue} 
\xi(\kk;m)=\sum_{\vert{\bf j}\vert=m-1,\,d({\bf j})=n} b(({\bf k}_+)^*;{\bf j})\,
\zeta(({\bf k}_+)^*+{\bf j})
\end{equation}
and
\begin{equation}\label{etavalue}
 \eta(\kk;m)=(-1)^{r-1}\sum_{\vert{\bf j}\vert=m-1,\, d({\bf j})=n} b(({\bf k}_+)^*;{\bf j})\,
\zeta^\star(({\bf k}_+)^*+{\bf j}),
\end{equation}
where both sums are over all ${\bf j}\in\mathbb{Z}_{\ge0}^r$ of weight $m-1$ and depth 
$n:=d({\bf k}_+^*) \ (=\vert{\bf k}\vert+1-d({\bf k}))$.

In particular, we have
\[ \qquad \xi(\kk;1)=\zeta(({\bf k}_+)^*)\quad(=\zeta({\bf k}_+),\ \text{by the duality of multiple zeta values}) \]
and
\[ \eta(\kk;1)=(-1)^{r-1}\zeta^\star(({\bf k}_+)^*).\]
\end{theorem}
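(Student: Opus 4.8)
The plan is to evaluate both functions at a positive integer $m$ by first passing to the generating function in $m$ and then recognizing the outcome as an $x$-deformation of the duality relation for multiple zeta values. For $\xi$ I would start from the substitution $u=1-e^{-t}$, which sends the defining integral over $(0,\infty)$ to one over $(0,1)$. Summing over $m$ and using $\sum_{m\ge1}(xt)^{m-1}/(m-1)!=e^{xt}$ to absorb the factor $\Gamma(m)=(m-1)!$, the generating function becomes
\[
\sum_{m\ge1}\xi(\kk;m)\,x^{m-1}=\int_0^1(1-u)^{-x}\,\frac{\Li_{\kk}(u)}{u}\,du .
\]
By the differentiation formula \eqref{2-1} one has $\frac{\Li_{\kk}(u)}{u}\,du=d\Li_{\mathbf{k}_+}(u)$, so the right-hand side is the iterated integral for $\mathbf{k}_+$ weighted by the boundary factor $(1-u)^{-x}$.

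The key step is the substitution $u\mapsto 1-u$, the integral incarnation of the duality of multiple zeta values: it turns the word attached to $\mathbf{k}_+$ into the one attached to its dual $(\mathbf{k}_+)^*$, while the factor $(1-u)^{-x}$ migrates to the innermost variable as $s^{-x}$. I would then observe that this innermost $s^{-x}$ propagates outward through the dualized iterated integral, each integration against $ds/(1-s)$ advancing a summation index and each integration against $ds/s$ raising an exponent, so that with $(\mathbf{k}_+)^*=(l_1,\ldots,l_n)$ one gets the shifted multiple zeta value
\[
\int_0^1(1-u)^{-x}\,d\Li_{\mathbf{k}_+}(u)=\sum_{1\le\mu_1<\cdots<\mu_n}\ \prod_{i=1}^n\frac{1}{(\mu_i-x)^{l_i}} .
\]
Expanding $(\mu_i-x)^{-l_i}=\mu_i^{-l_i}\sum_{j_i\ge0}\binom{l_i+j_i-1}{j_i}(x/\mu_i)^{j_i}$ and reading off the coefficient of $x^{m-1}$ yields exactly $\sum_{|\mathbf{j}|=m-1,\,d(\mathbf{j})=n}b((\mathbf{k}_+)^*;\mathbf{j})\,\zeta((\mathbf{k}_+)^*+\mathbf{j})$, which is \eqref{xivalue}; the term $m=1$ gives $\zeta((\mathbf{k}_+)^*)=\zeta(\mathbf{k}_+)$.

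For $\eta$ the same substitution $s=1-e^{-t}$ applies, but the argument of the polylogarithm now becomes negative, and a short computation gives
\[
\sum_{m\ge1}\eta(\kk;m)\,x^{m-1}=-\int_0^1(1-s)^{-x}\,\Li_{\kk}\!\left(\tfrac{s}{s-1}\right)\frac{ds}{s}.
\]
Here I would invoke the Landen-type change of variables $t\mapsto t/(t-1)$ in the iterated integral for $\Li_{\kk}$, under which $\frac{dt}{1-t}\mapsto-\frac{dt}{1-t}$ and $\frac{dt}{t}\mapsto\frac{dt}{t}+\frac{dt}{1-t}$. The $r$ forms of the first kind contribute the sign $(-1)^r$, which together with the outer minus sign produces $(-1)^{r-1}$, while expanding the forms of the second kind replaces each strict inequality by a non-strict one; running the same propagation argument as above then delivers the multiple zeta-star value $\zeta^\star$ in place of $\zeta$. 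This gives \eqref{etavalue}, and at $m=1$ the value $(-1)^{r-1}\zeta^\star((\mathbf{k}_+)^*)$.

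The main obstacle is the deformed duality identity itself: tracking how the boundary weight $(1-u)^{-x}$ is carried through the dualized iterated integral so as to shift each $\mu_i$ to $\mu_i-x$, and, in the $\eta$ case, the bookkeeping of the Landen expansion that simultaneously produces the sign $(-1)^{r-1}$ and promotes every strict inequality to a non-strict one, turning $\zeta$ into $\zeta^\star$ of the same dual index. I would also need to justify the interchange of summation and integration that defines the two generating functions, which holds for $x$ near $0$ by the estimates of Lemma \ref{L-2-3}.
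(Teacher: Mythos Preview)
Your approach is correct and takes a genuinely different route from the paper's. The paper stays in the exponential variables: starting from a partly-evaluated iterated integral for $\Li_{\kk}(z)$, it substitutes $z=1-e^{\pm t}$ and changes variables to obtain an $n$-fold integral representation of $\xi(\mathbf{k};s)$ and $\eta(\mathbf{k};s)$ valid for all $s$ with $\re(s)>1-r$ (their Proposition), the integrand being $(x_1+\cdots+x_n)^{s-1}\prod_i x_i^{l_i-1}$ times a product of factors $1/(e^{X_j}-1)$ for $\xi$, respectively with all but the first factor replaced by $e^{X_j}/(e^{X_j}-1)$ for $\eta$. At $s=m$ the multinomial expansion of $(x_1+\cdots+x_n)^{m-1}$ then reduces everything to the integral formulas for $\zeta(\mathbf{l})$ and $\zeta^\star(\mathbf{l})$ recorded in their Lemma.

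You instead pass to the unit interval, package all $m$ into a generating series in $x$, and apply the simplex duality $u\mapsto 1-u$ directly to the weighted iterated integral; the weight $(1-u)^{-x}$ migrates to $u_1^{-x}$ on the innermost variable and, by the standard termwise expansion, produces the uniformly shifted nested sum $\sum_{\mu_1<\cdots<\mu_n}\prod_i(\mu_i-x)^{-l_i}$ --- essentially the iterated-integral proof of Ohno's relation. For $\eta$ you precede duality by the Landen involution; since the combined form $\tfrac{du}{u(1-u)}=\tfrac{du}{u}+\tfrac{du}{1-u}$ is invariant under the swap induced by $u\mapsto 1-u$, the positions carrying a pure $\tfrac{du}{u}$ after duality---hence the exponent pattern $(l_1,\ldots,l_n)=(\mathbf{k}_+)^*$---are exactly as in the $\xi$ computation, while each position that would have carried $\tfrac{du}{1-u}$ now carries $\tfrac{du}{u(1-u)}$, relaxing the corresponding strict inequality to a non-strict one. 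Your method gives a clean conceptual explanation of why $\zeta$ turns into $\zeta^\star$ on the \emph{same} dual index; the paper's method buys an integral identity valid for general complex $s$ (their Proposition), not only at integers, which is of independent interest.
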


In order to prove the theorem, we give certain multiple integral expressions of the functions 
$\xi(\kk;s)$ and $\eta(\kk;s)$.

\begin{proposition}  
Notations being as above, write $({\bf k}_+)^*=(l_1,\ldots,l_{n})$. Then we have, for $\re(s)>1-r$, 
\begin{enumerate}[{\rm (i)}]
\item \hfill
\vspace{-\abovedisplayskip}
\vspace{-\baselineskip}
\begin{align*}
\xi(\kk;s)&= \frac{1}{\prod_{i=1}^{n} \Gamma(l_i)\cdot\Gamma(s)}\int_0^\infty\cdots\int_0^\infty 
(x_1+\cdots+x_{n})^{s-1}x_{1}^{l_1-1}\cdots x_{n}^{l_{n}-1}\nonumber\\
& \qquad \times\frac1{e^{x_1+\cdots+x_{n}}-1}\cdot \frac1{e^{x_2+\cdots+x_{n}}-1}\cdots\cdots  
\frac1{e^{x_{n}}-1}
dx_1\cdots dx_{n}.
\end{align*}
\item \hfill
\vspace{-\abovedisplayskip}
\vspace{-\baselineskip}
\begin{align*}
\eta(\kk;s)& = \frac{(-1)^{r-1}}{\prod_{i=1}^{n} \Gamma(l_i)\cdot\Gamma(s)}\int_0^\infty\cdots\int_0^\infty 
(x_1+\cdots+x_{n})^{s-1}x_{1}^{l_1-1}\cdots x_{n}^{l_{n}-1}\nonumber\\
& \qquad \times\frac1{e^{x_1+\cdots+x_{n}}-1}\cdot \frac{e^{x_2+\cdots+x_{n}}}{e^{x_2+\cdots+x_{n}}-1}\cdots\cdots  
\frac{e^{x_{n}}}{e^{x_{n}}-1}
dx_1\cdots dx_{n}.
\end{align*}
\end{enumerate}
\end{proposition}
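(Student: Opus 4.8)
The plan is to derive (i) and (ii) from a single iterated-integral computation, the only differences being an overall sign and the shape of the interior factors. First I would integrate the relation \eqref{2-1} to obtain, with $\omega_0(z)=dz/z$ and $\omega_1(z)=dz/(1-z)$, the representation $\Li_{\bf k}(Z)=\int\omega_{\epsilon_1}\cdots\omega_{\epsilon_{w-1}}$ over the path-ordered simplex from $0$ to $Z$, where $w:=|{\bf k}|+1$ and the $0$--$1$ word $\epsilon_1\cdots\epsilon_{w-1}$, read from the innermost variable, is $1\,0^{k_1-1}\cdots 1\,0^{k_r-1}$ (so it carries $r$ letters equal to $1$). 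Then I would substitute $z_i=1-e^{-t_i}$ (for $\xi$) or $z_i=1-e^{t_i}$ (for $\eta$) in every variable; both substitutions are monotone, so the ordering becomes $0<t_1<\cdots<t_{w-1}<t$, and the forms transform by $\omega_0\mapsto dt/(e^t-1),\ \omega_1\mapsto dt$ for $\xi$, and by $\omega_0\mapsto e^t\,dt/(e^t-1),\ \omega_1\mapsto -dt$ for $\eta$.

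Multiplying by the explicit prefactor of the integrand ($1/(e^t-1)$ for $\xi$, and $1/(1-e^t)=-1/(e^t-1)$ for $\eta$) and by $t^{s-1}/\Gamma(s)$, and integrating $t=t_w$ from $0$ to $\infty$, I obtain in both cases a single integral over $0<t_1<\cdots<t_w<\infty$ whose $0/1$ pattern is now the word of ${\bf k}_+=(k_1,\ldots,k_r+1)$, the extra top letter being the $0$ supplied by the prefactor. For $\eta$ the $r$ interior $1$-forms and the top prefactor each carry a sign, giving exactly $(-1)^{r+1}=(-1)^{r-1}$; after extracting it every factor is positive, the interior $0$-forms being of the shape $e^{t}/(e^{t}-1)$ and the top factor $1/(e^{t}-1)$, whereas for $\xi$ no sign arises and every $0$-factor is $1/(e^{t}-1)$.

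The core step is to blow up this simplex integral into the stated product. By the reverse-and-swap description of the dual index, the word of ${\bf k}_+$ reads $1^{l_n-1}0\,1^{l_{n-1}-1}0\cdots 1^{l_1-1}0$ from the innermost variable, so the $n$ zeros cut the $1$-forms into $n$ consecutive blocks. Each $1$-form is plain Lebesgue measure, so integrating the block lying between two successive zeros $v_{j-1}<v_j$ (with $v_0:=0$) yields $(v_j-v_{j-1})^{l_{n+1-j}-1}/(l_{n+1-j}-1)!$; what survives is an integral over $0<v_1<\cdots<v_n<\infty$ carrying $v_n^{s-1}$ together with $\prod_j 1/(e^{v_j}-1)$ (resp. the $\eta$-analogue, with $e^{v_j}/(e^{v_j}-1)$ for $j<n$). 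The unimodular change of variables $x_i:=v_{n+1-i}-v_{n-i}$, i.e. $v_j=x_{n+1-j}+\cdots+x_n$, then turns the gap powers into $\prod_i x_i^{l_i-1}$, the factorials into $\prod_i\Gamma(l_i)$, the weight $v_n^{s-1}$ into $(x_1+\cdots+x_n)^{s-1}$, and the factors $1/(e^{v_j}-1)$ into the nested $1/(e^{x_m+\cdots+x_n}-1)$; this is precisely (i), and the same computation with $e^{v_j}/(e^{v_j}-1)$ in the interior gives (ii).

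I expect the combinatorics of the blow-up to be the main obstacle: one must match the word of ${\bf k}_+$ to $(l_1,\ldots,l_n)=({\bf k}_+)^*$ under reverse-and-swap and verify that integrating out the $1$-blocks reproduces the powers $x_i^{l_i-1}$ with the correct indexing and the correctly nested exponentials $e^{x_m+\cdots+x_n}$; the sign count for $\eta$ is the other point to handle with care. Absolute convergence and the appeal to Fubini/Tonelli on $\re(s)>1-r$ are then routine, since after extracting the sign the integrand is positive and the estimates of Lemma \ref{L-2-3} control the behaviour at both endpoints.
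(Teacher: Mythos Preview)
Your approach is correct and is essentially the same as the paper's. Both arguments start from the iterated integral representation of $\Li_{\bf k}$ coming from \eqref{2-1}, integrate out the $\omega_1$-blocks, substitute $z=1-e^{\mp t}$, plug into the defining integrals of $\xi$ and $\eta$, and finish with the same linear change of variables to the $x_i$'s. The only cosmetic difference is the order of two commuting steps: the paper performs the $\omega_1$-integrations in the $z$-variable first (producing the powers $\frac{1}{a_i!}\log\!\bigl(\frac{1-x_{i-1}}{1-x_i}\bigr)^{a_i}$, which become $(t_\bullet-t_\bullet)^{a_i}/a_i!$ after substitution) and uses the explicit $(a_i,b_i)$ parametrization of the index, whereas you substitute first so that $\omega_1$ becomes $\pm dt$ and then integrate those Lebesgue blocks directly, organizing the bookkeeping through the $0$/$1$ word and the reverse-and-swap description of the dual.
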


\begin{proof}  First write the index $(\kk)$ as 
\[  (\kk)=(\underbrace{1,\ldots,1}_{a_1-1},b_1+1,\ldots, \underbrace{1,\ldots,1}_{a_h-1},b_h+1),\]
with (uniquely determined) integers $h\ge1,\ a_i\ge1\ (1\le i\le h),\, b_i\ge1\ (1\le i\le h-1)$, and $b_h\ge0$.
Then, by performing the intermediate integrals of repeated $dz/(1-z)$ in the standard iterated integral 
coming from \eqref{2-1}, we obtain the following 
iterated integral expression of the multiple polylogarithm $\Li_{\kk}(z)$:
\begin{align}
\Li_{\kk}(z)&=\underbrace{\int_0^z\frac{dx_h}{x_h}\int_0^{x_h}
\cdots\cdots\int_0^{x_h}\frac{dx_h}{x_h}}_{b_h}\int_0^{x_h}\frac1{a_h!}\log\biggl(\frac{1-x_{h-1}}{1-x_{h}}\biggr)^{a_h}\,\frac{dx_{h-1}}{x_{h-1}}\nonumber\\
&\cdot\underbrace{\int_0^{x_{h-1}}\frac{dx_{h-1}}{x_{h-1}}\cdots\cdots \int_0^{x_{h-1}}
\frac{dx_{h-1}}{x_{h-1}}}_{b_{h-1}-1}\int_0^{x_{h-1}}\frac1{a_{h-1}!}
\log\biggl(\frac{1-x_{h-2}}{1-x_{h-1}}\biggr)^{a_{h-1}}\,\frac{dx_{h-2}}{x_{h-2}}\cdots\cdots\nonumber\\
&\cdots\cdots \underbrace{\int_0^{x_{3}}\frac{dx_{3}}{x_{3}}\cdots\cdots \int_0^{x_{3}}
\frac{dx_{3}}{x_{3}}}_{b_{3}-1}\int_0^{x_3}\frac1{a_3!}\log\biggl(\frac{1-x_2}{1-x_{3}}\biggr)^{a_3}
\frac{dx_2}{x_2}\underbrace{\int_0^{x_2}\frac{dx_2}{x_2}
\cdots\int_0^{x_2}\frac{dx_2}{x_2}}_{b_2-1}\nonumber\\
&\cdot\int_0^{x_2}\frac1{a_2!}\log\biggl(\frac{1-x_1}{1-x_{2}}\biggr)^{a_2}
\underbrace{\frac{dx_1}{x_1}\int_0^{x_1}\cdots\cdots\int_0^{x_1}\frac{dx_1}{x_1}\int_0^{x_1}}_{b_1-1}
\frac{\bigl(-\log(1-x)\bigr)^{a_1}}{a_1!}\frac{dx}{x}.\nonumber
\end{align}
Here, to ease notation, we used the same variable in the repetitions of integrals $\int_0^x dx/x$,  
and we understand $x_h=z$ if $b_h=0$. The paths of integrations are in the domain $\mathbb{C}\setminus [1,\infty)$,
and the formula is valid for $z\in\mathbb{C}\setminus [1,\infty)$.  We may check this formula by differentiating 
both sides repeatedly and using \eqref{2-1}.  Putting $z=1-e^{-t}$ and $1-e^t$, changing variables accordingly,
and suitably labeling the variables, we obtain 
\begin{align}\label{liiter1}
&\Li_{\kk}(1-e^{-t})=\int_0^t\int_0^{t_{b_1+\cdots+b_h}}\cdots\int_0^{t_2}
\underbrace{\frac1{e^{t_{b_1+\cdots+b_h}}-1}\ \cdots\cdots\  \frac1{e^{t_{b_1+\cdots+b_{h-1}+2}}-1}}_{b_h-1} \\
&\qquad\times\frac1{a_h!}\frac{(t_{b_1+\cdots+b_{h-1}+1}-t_{b_1+\cdots+b_{h-1}})^{a_h}}{e^{t_{b_1+\cdots+b_{h-1}+1}}-1}
\ \cdot\ \underbrace{\frac1{e^{t_{b_1+\cdots+b_{h-1}}}-1}\cdots\cdots
\frac1{e^{t_{b_1+\cdots+b_{h-2}+2}}-1}}_{b_{h-1}-1}\nonumber\\
&\qquad \times\cdots\cdots\nonumber\\
&\qquad\times\frac1{a_3!}\frac{(t_{b_1+b_2+1}-t_{b_1+b_2})^{a_3}}{e^{t_{b_1+b_2+1}}-1}
\ \cdot\ \underbrace{\frac1{e^{t_{b_1+b_2}}-1}\cdots\cdots 
\frac1{e^{t_{b_1+2}}-1}}_{b_2-1}\nonumber\\
&\qquad\times\frac1{a_2!}\frac{(t_{b_1+1}-t_{b_1})^{a_2}}{e^{t_{b_1+1}}-1}
\ \cdot\ \underbrace{\frac1{e^{t_{b_1}}-1}\cdots\cdots 
\frac1{e^{t_2}-1}}_{b_{1}-1}\cdot\frac1{a_1!}\frac{t_1^{a_1}}{e^{t_1}-1}\,dt_1\,dt_2\,\cdots\,dt_{b_1+\cdots+b_h},\nonumber
\end{align}
and
\begin{align}\label{liiter2}
&\Li_{\kk}(1-e^t)=(-1)^r\int_0^t\int_0^{t_{b_1+\cdots+b_h}}\cdots\int_0^{t_2}
\underbrace{\frac{e^{t_{b_1+\cdots+b_h}}}{e^{t_{b_1+\cdots+b_h}}-1}\ \cdots\cdots\  
\frac{e^{t_{b_1+\cdots+b_{h-1}+2}}}{e^{t_{b_1+\cdots+b_{h-1}+2}}-1}}_{b_h-1} \\
&\quad\times\frac1{a_h!}\frac{(t_{b_1+\cdots+b_{h-1}+1}-t_{b_1+\cdots+b_{h-1}})^{a_h}
e^{t_{b_1+\cdots+b_{h-1}+1}}}{e^{t_{b_1+\cdots+b_{h-1}+1}}-1}
\ \cdot\ \underbrace{\frac{e^{t_{b_1+\cdots+b_{h-1}}}}{e^{t_{b_1+\cdots+b_{h-1}}}-1}\cdots\cdots 
\frac{e^{t_{b_1+\cdots+b_{h-2}+2}}}{e^{t_{b_1+\cdots+b_{h-2}+2}}-1}}_{b_{h-1}-1}\nonumber\\
&\quad \times\cdots\cdots\nonumber\\
&\quad\times\frac1{a_3!}\frac{(t_{b_1+b_2+1}-t_{b_1+b_2})^{a_3}e^{t_{b_1+b_2+1}}}{e^{t_{b_1+b_2+1}}-1}
\ \cdot\ \underbrace{\frac{e^{t_{b_1+b_2}}}{e^{t_{b_1+b_2}}-1}\cdots\cdots 
\frac{e^{t_{b_1+2}}}{e^{t_{b_1+2}}-1}}_{b_2-1}\nonumber\\
&\quad\times\frac1{a_2!}\frac{(t_{b_1+1}-t_{b_1})^{a_2}e^{t_{b_1+1}}}{e^{t_{b_1+1}}-1}
\ \cdot\ \underbrace{\frac{e^{t_{b_1}}}{e^{t_{b_1}}-1}\cdots\cdots 
\frac{e^{t_2}}{e^{t_2}-1}}_{b_{1}-1}\cdot
\frac1{a_1!}\frac{t_1^{a_1}e^{t_1}}{e^{t_1}-1}\,dt_1\,dt_2\,\cdots\,dt_{b_1+\cdots+b_h}.\nonumber
\end{align}
The factor $(-1)^r$ on the right of \eqref{liiter2} comes from $(-1)^{a_1+\cdots+a_h}=(-1)^r$. 
Plugging \eqref{liiter1} and \eqref{liiter2} into the definitions \eqref{xidef} and \eqref{etadef} respectively  
and making the change of variables 
$$t=x_1+\cdots+x_n,\,t_{b_1+\cdots+b_h}=x_2+\cdots+x_n,
\,t_{b_1+\cdots+b_h-1}=x_3+\cdots+x_n,\ldots,t_2=x_{n-1}+x_n,\,t_1=x_n,$$ 
we obtain the proposition.  One should note that the dual index
$({\bf k}_+)^*=(l_1,\ldots,l_{n})$ is given by 
\[ ({\bf k}_+)^*=(\underbrace{1,\ldots,1}_{b_h},a_h+1,\underbrace{1,\ldots,1}_{b_{h-1}-1},a_{h-1}+1,
\ldots, \underbrace{1,\ldots,1}_{b_1-1},a_1+1) \]
and the depth $n$ is equal to $b_1+\cdots+b_h+1$, and that (the trivial) $x_i^{l_i-1}=1$ when $l_i=1$. 
\end{proof}

\begin{proof}[Proof of Theorem~\ref{T-3-7}]  Set $s=m$ in the integral expressions in the proposition, and expand
$(x_1+\cdots+x_k)^{m-1}$ by the multinomial theorem.  Then the formula in the theorem follows from 
the lemma below.  
\end{proof}
\begin{lemma}
For $l_1,\ldots,l_r\in \mathbb{Z}_{\geq 1}$ with $l_r\geq 2$,  we have
\[ \zeta(l_1,\ldots,l_r) \ = \frac{1}{\prod_{j=1}^{r}\Gamma(l_j)}\int_0^\infty\!\!\cdots\int_0^\infty 
\frac{x_1^{l_1-1}\cdots x_r^{l_r-1}}{e^{x_1+\cdots+x_r}-1}\cdot \frac1{e^{x_{2}+\cdots+x_{r}}-1}\cdots\cdots 
\frac1{e^{x_r}-1}dx_1\cdots dx_r \]
and
\[ \zeta^\star(l_1,\ldots,l_r) 
 = \frac{1}{\prod_{j=1}^{r}\Gamma(l_j)}\int_0^\infty\!\!\cdots\int_0^\infty 
\frac{x_1^{l_1-1}\cdots x_r^{l_r-1}}{e^{x_1+\cdots+x_r}-1}\cdot \frac{e^{x_{2}+\cdots+x_r}}{e^{x_{2}+\cdots+x_{r}}-1}
\cdots\cdots \frac{e^{x_r}}{e^{x_r}-1}dx_1\cdots dx_r.
\]
\end{lemma}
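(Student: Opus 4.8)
The plan is to expand each denominator factor as a geometric series, interchange summation and integration using nonnegativity of the integrand, and evaluate the resulting one-dimensional integrals by the Gamma integral $\int_0^\infty x^{l-1}e^{-Nx}\,dx=\Gamma(l)/N^l$. The two formulas will come out by the same computation, the only difference entering at one clearly identifiable place.

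First I would introduce the partial-sum variables $y_i:=x_i+x_{i+1}+\cdots+x_r$ for $1\le i\le r$, so that every denominator factor in both integrands depends on a single combination $y_i$. Using $\frac{1}{e^{y}-1}=\sum_{m\ge1}e^{-my}$ and $\frac{e^{y}}{e^{y}-1}=\frac{1}{1-e^{-y}}=\sum_{m\ge0}e^{-my}$ for $y>0$, the $\zeta$-integrand becomes $x_1^{l_1-1}\cdots x_r^{l_r-1}\sum e^{-\sum_i m_iy_i}$ with all $m_i\ge1$, while the $\zeta^\star$-integrand produces the identical shape but with $m_1\ge1$ and $m_2,\ldots,m_r\ge0$ (the common first factor being $1/(e^{y_1}-1)$, which in both cases forces $m_1\ge1$). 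This single discrepancy in the range of $m_2,\ldots,m_r$ is the entire source of the difference between the two statements.

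Next I would rewrite the exponent as $\sum_i m_iy_i=\sum_j (m_1+\cdots+m_j)x_j$ and set $N_j:=m_1+\cdots+m_j$, which decouples the integral into a product over $j$. Under the change of summation index $(m_1,\ldots,m_r)\mapsto(N_1,\ldots,N_r)$, the condition $m_i\ge1$ for all $i$ becomes a bijection onto $1\le N_1<N_2<\cdots<N_r$, whereas $m_1\ge1,\ m_i\ge0\ (i\ge2)$ becomes $1\le N_1\le N_2\le\cdots\le N_r$. Interchanging the sum with the multiple integral and evaluating $\int_0^\infty x_j^{l_j-1}e^{-N_jx_j}\,dx_j=\Gamma(l_j)/N_j^{l_j}$, one obtains $\prod_j\Gamma(l_j)\cdot\sum_{N}\prod_j N_j^{-l_j}$; dividing by $\prod_j\Gamma(l_j)$ then gives $\zeta(l_1,\ldots,l_r)$ in the strict case and $\zeta^\star(l_1,\ldots,l_r)$ in the weak case, matching \eqref{MZV} and \eqref{MZSV}.

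The main obstacle is justifying the term-by-term integration together with the convergence of the whole expression. Since every summand is nonnegative on the domain $(0,\infty)^r$, Tonelli's theorem permits the interchange unconditionally, so the multiple integral equals the (a priori possibly infinite) sum. The hypothesis $l_r\ge2$ then guarantees that this sum, namely the multiple zeta (respectively zeta-star) value, is finite; this simultaneously certifies absolute convergence of the integral and the asserted equality, completing the proof.
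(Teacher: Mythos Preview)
Your argument is correct and is essentially the same as the paper's: the paper inserts the Gamma integral $n^{-s}=\Gamma(s)^{-1}\int_0^\infty t^{s-1}e^{-nt}\,dt$ into the series definition of $\zeta^\star$ (with the same parametrization $m_1\ge1,\ m_2,\ldots,m_r\ge0$) and then sums the geometric series, i.e.\ the identical computation run in the opposite direction. The first formula is simply cited from \cite{AK1999}, and your Tonelli justification is a welcome addition that the paper leaves implicit.
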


\begin{proof}
The first formula is given in \cite[Theorem 3 (i)]{AK1999}.  As for the second, we may proceed similarly 
by using $n^{-s}=\Gamma(s)^{-1}\int_0^\infty t^{s-1}e^{-nt}\,dt$ to have
\begin{align*}
&\zeta^\star(l_1,\ldots,l_r) =\sum_{m_1=1}^\infty\ \sum_{m_2,\ldots, m_r=0}^\infty
\frac1{m_1^{l_1}(m_1+m_2)^{l_2}\cdots\cdots(m_1+\cdots +m_r)^{l_r}}\\
&=\frac{1}{\prod_{j=1}^{r}\Gamma(l_j)}  \sum_{m_1=1}^\infty\ \sum_{m_2,\ldots, m_r=0}^\infty
\int_0^\infty\cdots\int_0^\infty x_1^{l_1-1} e^{-m_1x_1}\cdot x_2^{l_2-1}e^{-(m_1+m_2)x_2}\cdots\\
&\qquad\qquad\qquad\qquad\qquad\qquad\qquad\qquad\qquad\qquad
\cdots x_r^{l_r-1}e^{-(m_1+\cdots+m_r)x_r}\ dx_1\cdots dx_r\\
&=\frac{1}{\prod_{j=1}^{r}\Gamma(l_j)}  \sum_{m_1=1}^\infty\ \sum_{m_2,\ldots, m_r=0}^\infty
\int_0^\infty\cdots\int_0^\infty x_1^{l_1-1}\cdots x_r^{l_r-1}
e^{-m_1(x_1+\cdots+x_r)}\cdot e^{-m_2(x_2+\cdots+x_r)}\cdots\\
&\qquad\qquad\qquad\qquad\qquad\qquad\qquad\qquad\qquad\qquad
\cdots e^{-m_r x_r}\ dx_1\cdots dx_r\\
&= \frac{1}{\prod_{j=1}^{r}\Gamma(l_j)}\int_0^\infty\!\!\cdots\int_0^\infty 
\frac{x_1^{l_1-1}\cdots x_r^{l_r-1}}{e^{x_1+\cdots+x_r}-1}\cdot \frac{e^{x_{2}+\cdots+x_r}}{e^{x_{2}+\cdots+x_{r}}-1}
\cdots\cdots \frac{e^{x_r}}{e^{x_r}-1}dx_1\cdots dx_r.
\end{align*}
\end{proof}

We record here one corollary to the theorem in the case of $\eta_k(m)$ (compare with the similar formula 
in \cite[Theorem 9 (i)]{AK1999}). Noting $(k+1)^*=(\underbrace{1,\ldots,1}_{k-1},2)$,
we have

\begin{corollary}  For $k,m\ge1$, we have
\begin{equation}\label{3-8}
\eta_k(m)=\sum_{j_1,\ldots,j_{k-1}\ge1, j_k\ge2\atop j_1+\cdots+j_k=k+m}
(j_k-1)\zeta^\star(j_1,\ldots,j_{k-1},j_k). 
\end{equation}
\end{corollary}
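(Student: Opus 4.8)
The plan is to deduce this corollary directly from Theorem~\ref{T-3-7} by specializing to the depth-one case $r=1$, ${\bf k}=(k)$, so that the only work is to match the general summation in \eqref{etavalue} against the explicit sum in \eqref{3-8}. First I would record the numerical data entering \eqref{etavalue}. With $r=1$ the sign $(-1)^{r-1}$ equals $1$, and the depth of the summation index is $n=d({\bf k}_+^*)=\vert{\bf k}\vert+1-d({\bf k})=k+1-1=k$. Since ${\bf k}_+=(k+1)$, its dual is $({\bf k}_+)^*=(1,\ldots,1,2)$ with $k-1$ ones, as recorded just before the statement; write this as $(l_1,\ldots,l_k)$ with $l_1=\cdots=l_{k-1}=1$ and $l_k=2$.

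The key simplification is the evaluation of the binomial weight $b(({\bf k}_+)^*;{\bf j})$ for an index ${\bf j}=(j_1',\ldots,j_k')\in\mathbb{Z}_{\ge0}^k$ of weight $m-1$. By definition $b(({\bf k}_+)^*;{\bf j})=\prod_{i=1}^{k}\binom{l_i+j_i'-1}{j_i'}$; the first $k-1$ factors are $\binom{j_i'}{j_i'}=1$ because $l_i=1$, while the last factor is $\binom{j_k'+1}{j_k'}=j_k'+1$, so that $b(({\bf k}_+)^*;{\bf j})=j_k'+1$. It then remains to reparametrize the summation: setting $j_i=1+j_i'$ for $1\le i\le k-1$ and $j_k=2+j_k'$, the nonnegativity constraints $j_i'\ge0$ become $j_i\ge1$ for $i<k$ and $j_k\ge2$, the argument becomes $({\bf k}_+)^*+{\bf j}=(j_1,\ldots,j_k)$, the coefficient becomes $j_k'+1=j_k-1$, and the weight condition $\vert{\bf j}\vert=m-1$ turns into $j_1+\cdots+j_k=(k-1)+2+(m-1)=k+m$. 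Substituting all of this into \eqref{etavalue} reproduces exactly the right-hand side of \eqref{3-8}.

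Because everything follows from inserting the specialized data into the already-proved Theorem~\ref{T-3-7}, there is no genuine analytic obstacle here; the only point requiring care is the index shift. In particular I would double-check that the offset $j_k=2+j_k'$, which originates from the entry $l_k=2$ in the dual index, simultaneously produces the lower bound $j_k\ge2$ and the coefficient $j_k-1$ (rather than $j_k$ or $j_k+1$), since a miscount of this single off-by-one is the one place where the bookkeeping could plausibly go wrong.
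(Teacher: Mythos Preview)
Your proposal is correct and follows exactly the same approach as the paper: the corollary is simply the specialization of Theorem~\ref{T-3-7} to ${\bf k}=(k)$, using $({\bf k}_+)^*=(\underbrace{1,\ldots,1}_{k-1},2)$, and your explicit computation of the binomial weights and the index shift $j_i=l_i+j_i'$ fills in the details that the paper leaves to the reader.
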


\section{Relations among the functions $\xi,\,\eta$ and $\zeta$, and their consequences to 
multiple zeta values and multi-poly-Bernoulli numbers}\label{sec-3}

In this section, we first deduce that each of the functions $\eta$ and $\xi$ can be written
as a linear combination of the other by the {\it same} formula.  This is a consequence of the
so-called Landen-type connection formula for the multiple polylogarithm $\Li_{\kk}(z)$.
We then establish a formula for $\xi(\kk;s)$ in terms of the single-variable multiple zeta function
\begin{equation}
\zeta(l_1,\ldots,l_r;s)=\sum_{1\leq m_1<\cdots<m_r<m}\frac{1}{m_1^{l_1}\cdots m_r^{l_r}m^s}  \label{mzfct}
\end{equation}
defined for positive integers $l_1,\ldots,l_r$, the analytic continuation of which has been given in \cite{AK1999}
(the analytic continuation of a more general multi-variable multiple zeta function is established in \cite{AET2001}).
This answers to the question posed in \S5 of \cite{AK1999}. As a result, the function $\eta(\kk;s)$ can
also be written by the multiple zeta functions of the type above.
We then give a formula for values at positive integers of $\xi(\kk;s)$, and hence of $\eta(\kk;s)$, 
in terms of the `shuffle regularized values' of multiple zeta values, and thereby derive some consequences 
on the values of $\eta_k(s)$.

Let ${\bf k}=(k_1,\ldots,k_r)\in \mathbb{Z}_{\ge1}^r$ be an index set.
Recall that ${\bf k}$ is said to be admissible if the last entry $k_r$ is greater than 1,
the weight of  ${\bf k}$ is the sum $k_1+\cdots+k_r$, and 
the depth is the length $r$ of the index.  For two indices ${\bf k}$ and ${\bf k'}$ of
the same weight, we say ${\bf k'}$ refines ${\bf k}$, denoted ${\bf k}\preceq {\bf k'}$,
if ${\bf k}$ is obtained from ${\bf k'}$ by replacing some commas by $+$'s.  For example,
$(5)=(2+3)\preceq(2,3),\ (2,3)=(1+1,2+1)\preceq(1,1,2,1)$, etc.  The standard expression of a multiple
zeta-star value as a sum of multiple zeta values is written as
\[   \zeta^\star({\bf k})=\sum_{{\bf k'}\preceq{\bf k}\atop \text{admissible}}\zeta({\bf k'}), \]
where the sum on the right runs over the admissible indices ${\bf k'}$ such that ${\bf k}$ refines ${\bf k'}$.

The following formula is known as the Landen connection formula for the 
multiple polylogarithm  (\cite[Proposition 9]{OU}).

\begin{lemma}\label{landen} For any index ${\bf k}$ of depth $r$, we have
\begin{equation}  
\Li_{\bf k}\bigl(\frac{z}{z-1}\bigr)=(-1)^r\sum_{{\bf k}\preceq{\bf k'}}\Li_{\bf k'}(z).
\end{equation}
\end{lemma}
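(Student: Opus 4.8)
The plan is to prove the identity by passing to the iterated integral (Chen) representation of the multiple polylogarithm and performing the fractional linear substitution $u=t/(t-1)$. Writing $\omega_0=dt/t$ and $\omega_1=dt/(1-t)$, the defining differential equations \eqref{2-1} give, by an immediate induction on the weight,
\[
\Li_{\bf k}(z)=\int_0^z \omega_1\,\omega_0^{k_1-1}\,\omega_1\,\omega_0^{k_2-1}\cdots\omega_1\,\omega_0^{k_r-1},
\]
where $\int_0^z a_1a_2\cdots a_n:=\int_{0<t_1<\cdots<t_n<z}a_1(t_1)\cdots a_n(t_n)$ and the word contains exactly $r$ letters $\omega_1$, one beginning each of the $r$ blocks. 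The whole formula will then be a formal consequence of how this word transforms.

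First I would record the effect of the substitution. The map $\phi(t)=t/(t-1)$ is an involution fixing $0$ (and interchanging $1$ and $\infty$), and a direct computation gives the pullbacks
\[
\phi^*\omega_0=\omega_0+\omega_1,\qquad \phi^*\omega_1=-\,\omega_1.
\]
By the invariance of iterated integrals under such a change of variable, $\int_{\phi\circ\sigma}\alpha_1\cdots\alpha_n=\int_\sigma \phi^*\alpha_1\cdots\phi^*\alpha_n$, applied to a path $\sigma$ from $0$ to $z$ (so that $\phi\circ\sigma$ runs from $\phi(0)=0$ to $\phi(z)=z/(z-1)$), one obtains
\[
\Li_{\bf k}\!\Bigl(\frac{z}{z-1}\Bigr)=(-1)^r\int_0^z \omega_1\,(\omega_0+\omega_1)^{k_1-1}\,\omega_1\,(\omega_0+\omega_1)^{k_2-1}\cdots\omega_1\,(\omega_0+\omega_1)^{k_r-1},
\]
the sign $(-1)^r$ coming from the $r$ leading factors $\phi^*\omega_1=-\omega_1$, while $\phi^*\omega_0$ contributes none.

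The last step is combinatorial. Expanding each $(\omega_0+\omega_1)^{k_i-1}$ by non-commutative distributivity turns the integrand into the sum of all words $\omega_1X_1\,\omega_1X_2\cdots\omega_1X_r$, with $X_i$ ranging over the $2^{k_i-1}$ words of length $k_i-1$ in $\omega_0,\omega_1$. Reading off the index of such a word by cutting it before each $\omega_1$, the block $\omega_1X_i$ realizes, exactly once, each composition of $k_i$; hence the concatenations realize, exactly once, each index ${\bf k'}$ with ${\bf k}\preceq{\bf k'}$, and each contributes $\int_0^z(\text{its word})=\Li_{\bf k'}(z)$. This yields $\Li_{\bf k}(z/(z-1))=(-1)^r\sum_{{\bf k}\preceq{\bf k'}}\Li_{\bf k'}(z)$, as claimed.

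I would expect the main obstacle to be the analytic justification of the substitution rather than the algebra: one must keep the integration path and its image inside $\mathbb{C}\setminus[1,\infty)$, where all the forms are regular, which is transparent for $z$ in a small neighborhood of $0$ and is then propagated to all of $\mathbb{C}\setminus[1,\infty)$ by analytic continuation (both sides being holomorphic there). An alternative, entirely elementary route avoiding iterated integrals is to check that both sides vanish at $z=0$ and satisfy the same first-order differential equation: differentiating $\Li_{\bf k}(z/(z-1))$ via the chain rule and \eqref{2-1} produces exactly the factors $1/z$ and $1/(1-z)$ that match the derivative of the refinement sum, closing an induction on the weight; here the bookkeeping of the refinement sum under differentiation is the only delicate point.
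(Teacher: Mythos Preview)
Your argument is correct. The iterated-integral representation, the pullback computation $\phi^*\omega_0=\omega_0+\omega_1$, $\phi^*\omega_1=-\omega_1$ under $\phi(t)=t/(t-1)$, and the combinatorial identification of the expanded word with the set of refinements of $\mathbf{k}$ are all sound; each refinement $\mathbf{k'}\succeq\mathbf{k}$ arises exactly once, as you say, because choosing the $\omega_0/\omega_1$ pattern inside each block $\omega_1 X_i$ is the same as choosing a composition of $k_i$. One small imprecision: the forms are not all regular on $\mathbb{C}\setminus[1,\infty)$ since $\omega_0$ has a pole at $0$; what makes the substitution legitimate is rather that the word begins with $\omega_1$, so the iterated integral from $0$ converges, together with the fact that $\phi$ is an involution preserving $\mathbb{C}\setminus[1,\infty)$ (indeed $\phi((1,\infty))=(1,\infty)$ and $\phi(0)=0$).

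This is a genuinely different route from the paper, which only indicates the proof by induction on the weight via the differential relations \eqref{2-1} (and cites Okuda--Ueno). Your approach is the structural one: it explains \emph{why} the Landen substitution produces exactly the refinement sum, namely because $\phi^*$ acts on the alphabet $\{\omega_0,\omega_1\}$ by $\omega_0\mapsto\omega_0+\omega_1$, $\omega_1\mapsto-\omega_1$, and expanding a word under this substitution is literally the refinement process. The inductive argument you sketch at the end is essentially the paper's method; it is more elementary (no iterated integrals needed) but hides the combinatorics inside the recursion, and the bookkeeping of how the refinement sum behaves under $d/dz$ (splitting according to whether the last part of $\mathbf{k'}$ equals $1$ or not) is exactly the ``delicate point'' you flag.
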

We can prove this by induction on weight and by using \eqref{2-1}, see \cite{OU}.\\

By using this and noting $z/(z-1)=1-e^t$ (resp. $1-e^{-t}$) if $z=1-e^{-t}$ (resp. $1-e^t$),
we immediately obtain the following proposition.

\begin{proposition}\label{etaxi} Let ${\bf k}$ be any index set and $r$ its depth.  We have the relations
\begin{equation} \eta({\bf k};s)= (-1)^{r-1}\sum_{{\bf k}\preceq{\bf k'}}\xi({\bf k'};s)\label{etabyxi}
\end{equation}
and
\begin{equation}\xi({\bf k};s)= (-1)^{r-1}\sum_{{\bf k}\preceq{\bf k'}}\eta({\bf k'};s).\label{xibyeta}
\end{equation}
\end{proposition}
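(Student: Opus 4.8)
The plan is to derive both identities directly from the Landen connection formula (Lemma~\ref{landen}) by exploiting the fact that the Möbius substitution $z\mapsto z/(z-1)$ interchanges the two arguments $1-e^{-t}$ and $1-e^t$ that appear in the integrands of $\xi$ and $\eta$. Concretely, I would first record the elementary identities $z/(z-1)=1-e^t$ when $z=1-e^{-t}$, and $z/(z-1)=1-e^{-t}$ when $z=1-e^t$, each of which is a one-line computation (e.g.\ $(1-e^{-t})/(-e^{-t})=-(e^t-1)=1-e^t$). Feeding these into Lemma~\ref{landen} turns the numerator $\Li_{\bf k}(1-e^t)$ of the $\eta$-integrand into a signed finite sum of numerators $\Li_{\bf k'}(1-e^{-t})$ of $\xi$-integrands, and symmetrically in the other direction.

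For \eqref{etabyxi}, take $z=1-e^{-t}$ in Lemma~\ref{landen} to get $\Li_{\bf k}(1-e^t)=(-1)^r\sum_{{\bf k}\preceq{\bf k'}}\Li_{\bf k'}(1-e^{-t})$. Dividing by $1-e^t=-(e^t-1)$ converts the overall sign $(-1)^r$ into $(-1)^{r-1}$ and produces exactly the $\xi$-denominator, giving the pointwise identity
\[ \frac{\Li_{\bf k}(1-e^t)}{1-e^t}=(-1)^{r-1}\sum_{{\bf k}\preceq{\bf k'}}\frac{\Li_{\bf k'}(1-e^{-t})}{e^t-1}\qquad(t>0). \]
Multiplying by $t^{s-1}/\Gamma(s)$ and integrating over $(0,\infty)$ then yields \eqref{etabyxi}, since the sum is finite. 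The companion relation \eqref{xibyeta} is obtained in the same way by taking $z=1-e^t$, so that $\Li_{\bf k}(1-e^{-t})=(-1)^r\sum_{{\bf k}\preceq{\bf k'}}\Li_{\bf k'}(1-e^t)$, and dividing by $e^t-1=-(1-e^t)$.

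The only point requiring a little care is the domain on which these integral identities are valid, together with the passage to all of $\mathbb{C}$. I would note that a refinement ${\bf k'}$ of an index of depth $r$ has depth $\ge r$, so each $\xi({\bf k'};s)$ (resp.\ $\eta({\bf k'};s)$) is given by a convergent integral on a half-plane containing $\re(s)>1-r$; on that common region the pointwise integrand identity above integrates term by term (a finite sum) to the asserted identity of functions. Finally, since $\eta({\bf k};s)$ and $\xi({\bf k};s)$, as well as all of their refinements, are entire by Theorem~\ref{Th-Main-1} and the subsequent remark, the identities extend to every $s\in\mathbb{C}$ by analytic continuation. There is no real obstacle here beyond keeping the sign bookkeeping ($(-1)^r$ versus $(-1)^{r-1}$) straight; the substance of the argument is entirely contained in Lemma~\ref{landen}, which is why the statement can be obtained ``immediately''.
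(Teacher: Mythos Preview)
Your proof is correct and follows exactly the approach of the paper: the paper's proof is a one-line remark that Lemma~\ref{landen} together with the observation $z/(z-1)=1-e^{\pm t}$ when $z=1-e^{\mp t}$ gives the result ``immediately''. You have simply spelled out the sign bookkeeping and the analytic-continuation step that the paper leaves implicit.
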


\begin{corollary}\label{coretaxi} Let $k$ be a positive integer.  Then we have
\begin{equation}\label{etakxi}  
\eta_k(s)=\sum_{{\bf k}:\text{weight $k$}} \xi({\bf k};s)
\end{equation}
and
\begin{equation}
\label{xiketa} \xi_k(s)=\sum_{{\bf k}:\text{weight $k$}} \eta({\bf k};s),
\end{equation}
where the sums run over all indices of weight $k$. Here we have written $\xi_k(s)$ for $\xi(k;s)$. 
\end{corollary}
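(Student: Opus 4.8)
The plan is to obtain both identities as the depth-one specialization of Proposition \ref{etaxi}. I would take ${\bf k}=(k)$, so that $r=1$ and the sign $(-1)^{r-1}=(-1)^0=1$ disappears; the two formulas \eqref{etabyxi} and \eqref{xibyeta} then collapse to
\[ \eta((k);s)=\sum_{(k)\preceq{\bf k'}}\xi({\bf k'};s), \qquad \xi((k);s)=\sum_{(k)\preceq{\bf k'}}\eta({\bf k'};s), \]
after which it remains only to identify the index set of summation and to recall the abbreviations $\eta_k(s)=\eta((k);s)$ and $\xi_k(s)=\xi((k);s)$ fixed in Definition \ref{Def-Main-1} and in the statement of the corollary.

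The key step is to check that $\{{\bf k'} : (k)\preceq{\bf k'}\}$ is exactly the set of all indices of weight $k$. By the definition of refinement, $(k)\preceq{\bf k'}$ means that $(k)$ is obtained from ${\bf k'}$ by replacing some commas with $+$'s. Since $(k)$ is a single-entry index, the only way to reach it is to replace \emph{every} comma of ${\bf k'}$ by a $+$, which fuses all entries of ${\bf k'}$ into their sum. Hence $(k)\preceq{\bf k'}$ holds precisely when the entries of ${\bf k'}$ add up to $k$, that is, when ${\bf k'}$ has weight $k$. Substituting this description of the index set into the two displayed formulas yields \eqref{etakxi} and \eqref{xiketa}.

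I expect no genuine obstacle here: once Proposition \ref{etaxi} is in hand, the corollary is a direct specialization, and the only point requiring (essentially trivial) verification is the characterization of single-entry refinements above. If one preferred a self-contained route, one could instead go back to Lemma \ref{landen} and run the Landen connection formula directly at ${\bf k}=(k)$, but since the proposition is already proved in full generality, invoking it is the cleanest path and avoids repeating the bookkeeping of the refinement relation.
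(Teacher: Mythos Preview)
Your proposal is correct and follows essentially the same approach as the paper: specialize Proposition~\ref{etaxi} at the depth-one index ${\bf k}=(k)$, so that $(-1)^{r-1}=1$, and observe that the refinements of $(k)$ are exactly all indices of weight $k$. The paper's proof is just a one-sentence version of this same argument.
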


\begin{proof}  The index $(k)$ is of depth $1$ and all indices of weight $k$ (admissible or
non-admissible) refine $(k)$.
\end{proof}

We mention here that, also by taking ${\bf k}=(k)$ in Lemma~\ref{landen} and setting 
$z=1-e^t$ or $1-e^{-t}$, one immediately obtains a kind of sum formulas for multi-poly-Bernoulli
numbers as follows (compare with similar formulas in \cite[Theorem 3.1]{Ima}).

\begin{corollary} For $k\ge1$ and $m\ge0$,  we have
\begin{equation} \bb_m^{(k)}=(-1)^m\sum_{k_1+\cdots+k_r=k\atop k_i,r\ge1} C_m^{(\kk)}\end{equation}
and 
\begin{equation} C_m^{(k)}=(-1)^m\sum_{k_1+\cdots+k_r=k\atop k_i,r\ge1} B_m^{(\kk)}.\end{equation}
\end{corollary}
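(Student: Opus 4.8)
The plan is to specialize the Landen connection formula (Lemma~\ref{landen}) to the depth-one index ${\bf k}=(k)$. Since $(k)$ has a single entry, every composition of $k$ refines it, so the sum $\sum_{(k)\preceq{\bf k'}}$ runs over exactly the indices $(\kk)$ with $k_1+\cdots+k_r=k$ and $k_i,r\ge1$. Lemma~\ref{landen} then reads
\[ \Li_k\bigl(\frac{z}{z-1}\bigr)=-\sum_{k_1+\cdots+k_r=k\atop k_i,r\ge1}\Li_{\kk}(z). \]
Putting $z=1-e^{-t}$, for which $z/(z-1)=1-e^t$ as noted before Proposition~\ref{etaxi}, I obtain the master identity
\[ \Li_k(1-e^t)=-\sum_{k_1+\cdots+k_r=k\atop k_i,r\ge1}\Li_{\kk}(1-e^{-t}), \]
an identity of holomorphic functions near $t=0$. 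Both asserted formulas then follow by dividing this identity by an appropriate factor and comparing Taylor coefficients.

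For the first formula, I would replace $t$ by $-t$ in the defining series \eqref{1-1}, giving $\Li_k(1-e^t)/(1-e^t)=\sum_{n\ge0}(-1)^nB_n^{(k)}t^n/n!$. Substituting the master identity into the numerator and using $1/(1-e^t)=-1/(e^t-1)$ rewrites the left-hand side as $\sum_{\kk}\Li_{\kk}(1-e^{-t})/(e^t-1)$, which by \eqref{1-6-2} equals $\sum_{\kk}\sum_{n\ge0}C_n^{(\kk)}t^n/n!$. Comparing coefficients of $t^m/m!$ yields $(-1)^mB_m^{(k)}=\sum_{\kk}C_m^{(\kk)}$, as claimed.

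The second formula is entirely parallel. Replacing $t$ by $-t$ in \eqref{1-2} gives $\Li_k(1-e^t)/(e^{-t}-1)=\sum_{n\ge0}(-1)^nC_n^{(k)}t^n/n!$; here $1/(e^{-t}-1)=e^t/(1-e^t)=-1/(1-e^{-t})$, so after inserting the master identity the left-hand side becomes $\sum_{\kk}\Li_{\kk}(1-e^{-t})/(1-e^{-t})$, which by \eqref{1-6} equals $\sum_{\kk}\sum_{n\ge0}B_n^{(\kk)}t^n/n!$. Matching coefficients gives $(-1)^mC_m^{(k)}=\sum_{\kk}B_m^{(\kk)}$. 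The only point requiring care is the bookkeeping of the several sign-and-denominator manipulations involving $1-e^{\pm t}$ and $e^{\pm t}-1$; there is no analytic subtlety, since all series converge near the origin and the index sum is finite, and the single conceptual step---that refining $(k)$ produces precisely the compositions of $k$---is immediate from the definition of $\preceq$.
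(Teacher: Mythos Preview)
Your proof is correct and follows essentially the same approach as the paper: specialize the Landen connection formula (Lemma~\ref{landen}) to the depth-one index $(k)$, substitute $z=1-e^{\pm t}$, and read off the generating-function identities \eqref{1-1}, \eqref{1-2}, \eqref{1-6}, \eqref{1-6-2}. The only cosmetic difference is that the paper suggests plugging in both $z=1-e^{t}$ and $z=1-e^{-t}$ separately, whereas you use $z=1-e^{-t}$ once and carry the $t\mapsto -t$ substitution on the other side; the two are equivalent.
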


Next, we prove an Euler-type connection formula for the multiple polylogarithm.
If an index ${\bf k}$ is of weight $\vert{\bf k}\vert$, we also say the multiple zeta value 
$\zeta({\bf k})$ is of weight $\vert{\bf k}\vert$.

\begin{lemma}\label{eulcon} Let ${\bf k}$ be any index.  Then we have
\begin{equation}
\Li_{\bf k}(1-z)=\sum_{{\bf k'},\,j\ge0}c_{\bf k}({\bf k'};j)
\Li_{{\scriptsize{\underbrace{1,\ldots,1}_j}}}(1-z)\Li_{\bf k'}(z),\label{euler}
\end{equation}
where the sum on the right runs over indices ${\bf k'}$ and integers $j\ge0$ that satisfy
$\vert{\bf k'}\vert+j\le \vert{\bf k}\vert$, and $c_{\bf k}({\bf k'};j)$ is a $\mathbb{Q}$-linear combination
of multiple zeta values of weight $\vert{\bf k}\vert-\vert{\bf k'}\vert-j$. We understand
$\Li_{\emptyset}(z)=1$ and $\vert\emptyset\vert=0$ for the empty index $\emptyset$, and the constant $1$ is 
regarded as a multiple zeta value of weight $0$.
\end{lemma}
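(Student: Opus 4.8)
The plan is to prove \eqref{euler} by induction on the weight $N=\vert{\bf k}\vert$, running the same machinery (differentiate, apply the induction hypothesis, integrate back) that already underlies \eqref{2-1} and Lemma~\ref{landen}. Throughout I write $\Li_{1^j}$ for the multiple polylogarithm with $j$ ones, so that $\Li_{1^j}(1-z)=\tfrac{(-\log z)^j}{j!}$. The base case $N=0$ is the convention $\Li_\emptyset(1-z)=1$, matching the single term ${\bf k'}=\emptyset,\ j=0$ with coefficient $1$; the case ${\bf k}=(1)$ is $\Li_1(1-z)=-\log z=\Li_{1}(1-z)$ (so ${\bf k'}=\emptyset,\ j=1$). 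For the inductive step with $N\ge1$, I would differentiate the left-hand side by the chain rule and \eqref{2-1}: writing ${\bf k}^\flat$ for the weight-$(N-1)$ index obtained by subtracting $1$ from the last entry (if $k_r>1$) or deleting it (if $k_r=1$), one gets $\frac{d}{dz}\Li_{\bf k}(1-z)=-\frac{1}{1-z}\Li_{{\bf k}^\flat}(1-z)$ in the first case and $-\frac{1}{z}\Li_{{\bf k}^\flat}(1-z)$ in the second. The induction hypothesis expands $\Li_{{\bf k}^\flat}(1-z)$ as a $\mathbb Q[\text{MZV}]$-combination of products $\Li_{1^j}(1-z)\,\Li_{{\bf k'}}(z)$, so $\frac{d}{dz}\Li_{\bf k}(1-z)$ becomes such a combination of terms $\frac1z\Li_{1^j}(1-z)\Li_{{\bf k'}}(z)$ and $\frac1{1-z}\Li_{1^j}(1-z)\Li_{{\bf k'}}(z)$.

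The heart of the step is to integrate these two kinds of terms back into the required shape. Here I would use $\frac{d}{dz}\Li_{1^a}(1-z)=-\frac1z\Li_{1^{a-1}}(1-z)$ together with $\frac{d}{dz}\Li_{(\ldots,p)}(z)=\frac1z\Li_{(\ldots,p-1)}(z)$ (for $p\ge2$) and $\frac{d}{dz}\Li_{(\ldots,1)}(z)=\frac1{1-z}\Li_{(\ldots)}(z)$. For a $\frac1z$-term the product rule telescopes: $\frac1z\Li_{1^j}(1-z)\Li_{{\bf k'}}(z)$ is the exact derivative of $\sum_{i=0}^{j}\Li_{1^{j-i}}(1-z)\Li_{{\bf k'}\langle i+1\rangle}(z)$, where $\langle m\rangle$ denotes raising the last entry by $m$; each successive step transfers one factor $\Li_1(1-z)=-\log z$ into the last slot of the $z$-index until the $\log$-power is exhausted, leaving no remainder. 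A $\frac1{1-z}$-term is first rewritten, by one product-rule step, as $\frac{d}{dz}\bigl[\Li_{1^j}(1-z)\Li_{({\bf k'},1)}(z)\bigr]$ plus a $\frac1z$-term of the same total weight, which is then integrated as above. Summing these explicit antiderivatives produces a function $\tilde G_{\bf k}(z)$ of the asserted form; a routine weight count ($\vert{\bf l}\vert+a=\vert{\bf k'}\vert+j+1$ at each stage) shows every resulting coefficient is an MZV of weight $N-\vert{\bf l}\vert-a$, which is exactly the weight bookkeeping in the statement.

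Having matched derivatives, $\Li_{\bf k}(1-z)-\tilde G_{\bf k}(z)$ is a constant $C$, which I would pin down by letting $z\to1^-$: the left side tends to $\Li_{\bf k}(0)=0$, every term of $\tilde G_{\bf k}$ carrying a factor $\Li_{1^a}(1-z)=\tfrac{(-\log z)^a}{a!}$ with $a\ge1$ dies, and the surviving $a=0$ terms are limits of values $\Li_{{\bf k'}}(z)$ at $z\to1$. This is exactly where the main obstacle sits: among the $a=0$ terms one unavoidably meets non-admissible indices (e.g.\ $({\bf k'},1)$ produced by the $\frac1{1-z}$-step with $j=0$), whose values at $z=1$ diverge. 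Since the identity holds for all $z\in(0,1)$, the poly-logarithmic divergences must cancel, and the finite part is, by the shuffle regularization of multiple zeta values, a $\mathbb Q$-linear combination of convergent MZVs of weight $N$; this combination is $-C$, which we absorb as the ${\bf k'}=\emptyset,\ j=0$ coefficient $c_{\bf k}(\emptyset;0)$. Establishing this cancellation and the expression of the regularized boundary value in terms of honest convergent MZVs is the delicate point; everything else is the formal integration-by-parts bookkeeping described above.
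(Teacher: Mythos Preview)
Your overall plan---induct on weight, differentiate via \eqref{2-1}, write each resulting term as an exact derivative of products $\Li_{1^a}(1-z)\Li_{\bf l}(z)$, and then pin down the constant of integration---is exactly the paper's strategy, and your telescoping antiderivatives for the $\tfrac1z$- and $\tfrac1{1-z}$-terms are equivalent to the single identity the paper writes down.  The substantive difference is where you evaluate to determine $C$.  You take $z\to1^-$, which forces you to confront possibly non-admissible $\Li_{\bf l}(1)$ among the $a=0$ terms and to invoke shuffle regularization to argue that the surviving constant is a $\mathbb{Q}$-combination of convergent MZVs.  That is correct once the regularization theory (as in \cite{IKZ}) is in hand, but it is exactly the step you yourself flag as ``the delicate point,'' and as written it is a black-box appeal rather than a proof.

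The paper sidesteps this entirely.  For admissible ${\bf k}$ it evaluates at $z\to0$ instead: every antiderivative term $\Li_{1^{j-i}}(1-z)\Li_{{\bf l},1+i}(z)$ vanishes there (polynomial decay of the second factor beats the logarithmic blow-up of the first), and the constant is simply the convergent value $\zeta({\bf k})$.  For non-admissible ${\bf k}=({\bf k}_0,1^q)$ the paper runs a second induction on $q$: it multiplies the already-established expansion of $\Li_{{\bf k}_0,1^{q-1}}(1-z)$ by $\Li_1(1-z)$ and uses the shuffle product on both sides, so that no divergent limit is ever taken.  Thus the paper's argument is self-contained, while yours is more uniform but imports the regularization machinery precisely at the point you identify as delicate.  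If you want to complete your route, you need to make explicit that each $\Li_{\bf l}(z)$ has an asymptotic expansion as a polynomial in $\log(1-z)$ with MZV coefficients and that the constant term of the resulting finite combination is a $\mathbb{Q}$-linear combination of convergent MZVs of the correct weight.
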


\begin{proof}  We proceed by induction on the weight of ${\bf k}$.  When ${\bf k}=(1)$, the trivial
identity $\Li_1(1-z)=\Li_1(1-z)$ is the one asserted.  Suppose the weight $\vert{\bf k}\vert$ of
${\bf k}$ is greater than $1$ and assume the statement holds for any index of weight less than 
$\vert{\bf k}\vert$.  For ${\bf k}=(k_1,\ldots,k_r)$, set ${\bf k}_-=(k_1,\ldots,k_{r-1},k_r-1)$ and
${\bf k}_+=(k_1,\ldots,k_{r-1},k_r+1)$.

First assume that ${\bf k}$ is admissible.  Then, by \eqref{2-1} and  induction hypothesis, 
we have 
\[ \frac{d}{dz}\Li_{\bf k}(1-z)=-\frac{\Li_{{\bf k}_-}(1-z)}{1-z}=
-\frac1{1-z}\sum_{{\bf l},\,j\ge0}c_{{\bf k}_-}({\bf l};j)\Li_{{\scriptsize{\underbrace{1,\ldots,1}_j}}}(1-z)\Li_{\bf l}(z),
\] the right-hand side being of a desired form.  Here, again by \eqref{2-1}, we see that 
\[ \frac1{1-z}\Li_{{\scriptsize{\underbrace{1,\ldots,1}_j}}}(1-z)\Li_{\bf l}(z)
=\frac{d}{dz}\left(\sum_{i=0}^j \Li_{{\scriptsize{\underbrace{1,\ldots,1}_{j-i}}}}(1-z)\Li_{{\bf l},1+i}(z)\right).
\]
We therefore conclude 
\[ \Li_{\bf k}(1-z)=-\sum_{{\bf l},\,j\ge0}c_{{\bf k}_-}({\bf l};j)
\sum_{i=0}^j \Li_{{\scriptsize{\underbrace{1,\ldots,1}_{j-i}}}}(1-z)\Li_{{\bf l},1+i}(z)+C
\] with some constant $C$.  Since $\lim_{z\to 0}\Li_{{\scriptsize{\underbrace{1,\ldots,1}_{j-i}}}}(1-z)\Li_{{\bf l},1+i}(z)
=0$, we find $C=\zeta({\bf k})$ by setting $z\to0$, and obtain the desired expression for $\Li_{\bf k}(1-z)$. 

When ${\bf k}$ is not necessarily admissible, write ${\bf k}=({\bf k}_0,\underbrace{1,\ldots,1}_q)$
with an admissible ${\bf k}_0$ and $q\ge0$. We prove the identity by induction on $q$. 
The case $q=0$ (${\bf k}$ is admissible) is already done.  Suppose $q\ge1$ and assume the 
claim is true for smaller $q$.  Then by assumption we have the expression
\[ \Li_{{\bf k}_0,\scriptsize{\underbrace{1,\ldots,1}_{q-1}}}(1-z)=
\sum_{{\bf m},\,j\ge0}c_{{\bf k}'}({\bf m};j)\Li_{{\scriptsize{\underbrace{1,\ldots,1}_j}}}(1-z)\Li_{\bf m}(z),
\] 
where we have put ${\bf k}'=({\bf k}_0,\underbrace{1,\ldots,1}_{q-1})$.
 We multiply $\Li_1(1-z)$ on both sides. Then, by the shuffle product, the left-hand side 
becomes the sum of the form 
\[ q\, \Li_{\bf k}(1-z)+\sum_{{\bf k}_0': \text{admissible}}\Li_{{\bf k}_0',\scriptsize{\underbrace{1,\ldots,1}_{q-1}}}(1-z),
\] and each term in the sum is written in the claimed form by induction hypothesis. 
On the other hand, the right-hand side becomes also of the form desired because 
\[ \Li_1(1-z)\Li_{{\scriptsize{\underbrace{1,\ldots,1}_j}}}(1-z)=(j+1)\Li_{{\scriptsize{\underbrace{1,\ldots,1}_{j+1}}}}(1-z).
\]  Hence $\Li_{\bf k}(1-z)$ is of the form as claimed.
\end{proof}

With the lemma, we are now able to establish the following (see \cite[\S 5, Problem (i)]{AK1999}).

\begin{theorem}\label{xibyzeta}  Let ${\bf k}$ be any index set.  The function $\xi({\bf k};s)$ can be written 
in terms of multiple zeta functions as
\[ \xi({\bf k};s)=\sum_{{\bf k'},\,j\ge0}c_{\bf k}({\bf k'};j)\binom{s+j-1}{j}\zeta({\bf k'};s+j). \]
Here, the sum is over indices ${\bf k'}$ and integers $j\ge0$ satisfying 
$\vert{\bf k'}\vert+j\le \vert{\bf k}\vert$, and $c_{\bf k}({\bf k'};j)$ is a $\mathbb{Q}$-linear combination
of multiple zeta values of weight $\vert{\bf k}\vert-\vert{\bf k'}\vert-j$.  The index ${\bf k'}$ may
be $\emptyset$ and for this we set $\zeta(\emptyset;s+j)=\zeta(s+j)$.
\end{theorem}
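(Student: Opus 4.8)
The plan is to substitute the Euler-type connection formula (Lemma~\ref{eulcon}) into the integral \eqref{xidef} defining $\xi(\mathbf{k};s)$, after first reducing the all-ones polylogarithms appearing there to powers of $t$. Specializing Lemma~\ref{eulcon} to $z=e^{-t}$ (so that $1-z=1-e^{-t}$) gives
\[
\Li_{\bf k}(1-e^{-t})=\sum_{{\bf k'},\,j\ge0}c_{\bf k}({\bf k'};j)\,
\Li_{\underbrace{1,\ldots,1}_j}(1-e^{-t})\,\Li_{\bf k'}(e^{-t}).
\]
The crucial simplification is the classical identity $\Li_{\underbrace{1,\ldots,1}_j}(w)=\bigl(-\log(1-w)\bigr)^j/j!$, proved at once by induction on $j$ using \eqref{2-1} together with the vanishing of both sides at $w=0$. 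With $w=1-e^{-t}$ one has $-\log(1-w)=t$, hence $\Li_{\underbrace{1,\ldots,1}_j}(1-e^{-t})=t^j/j!$. Inserting this into \eqref{xidef} and pulling the (finite) sum over $\mathbf{k'}$ and $j$ outside the integral, I obtain
\[
\xi(\mathbf{k};s)=\sum_{{\bf k'},\,j\ge0}\frac{c_{\bf k}({\bf k'};j)}{j!}\cdot
\frac{1}{\Gamma(s)}\int_0^\infty t^{s+j-1}\,\frac{\Li_{\bf k'}(e^{-t})}{e^t-1}\,dt.
\]

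Next I would identify each inner integral with a single-variable multiple zeta function. Writing $\mathbf{k'}=(l_1,\ldots,l_r)$ and expanding both $\Li_{\bf k'}(e^{-t})=\sum_{1\le m_1<\cdots<m_r}e^{-m_r t}/(m_1^{l_1}\cdots m_r^{l_r})$ and $1/(e^t-1)=\sum_{n\ge1}e^{-nt}$, the product collapses, after the substitution $m=m_r+n$, to $\sum_{1\le m_1<\cdots<m_r<m}e^{-mt}/(m_1^{l_1}\cdots m_r^{l_r})$. Integrating term by term against $t^{s+j-1}$ and using $\Gamma(s+j)^{-1}\int_0^\infty t^{s+j-1}e^{-mt}\,dt=m^{-(s+j)}$ then yields
\[
\frac{1}{\Gamma(s)}\int_0^\infty t^{s+j-1}\,\frac{\Li_{\bf k'}(e^{-t})}{e^t-1}\,dt
=\frac{\Gamma(s+j)}{\Gamma(s)}\,\zeta(\mathbf{k'};s+j)
=j!\binom{s+j-1}{j}\,\zeta(\mathbf{k'};s+j),
\]
the last step being $\Gamma(s+j)/\Gamma(s)=s(s+1)\cdots(s+j-1)=j!\binom{s+j-1}{j}$; the empty-index case reproduces the convention $\zeta(\emptyset;s+j)=\zeta(s+j)$. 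Substituting back, the factors $j!$ cancel and the claimed formula falls out, the constraint $\vert\mathbf{k'}\vert+j\le\vert\mathbf{k}\vert$ and the description of the coefficients $c_{\bf k}({\bf k'};j)$ being carried over verbatim from Lemma~\ref{eulcon}.

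The point requiring genuine care --- and the main obstacle --- is the justification of the two term-by-term integrations and, correspondingly, the range of $s$ in which they are valid. These manipulations of positive-term series are licit (by Tonelli, exactly as in \cite[Theorem~3]{AK1999}) only in a right half-plane $\re(s)\gg 0$ where every $\zeta(\mathbf{k'};s+j)$ converges absolutely; note that a non-admissible $\mathbf{k'}$ causes $\Li_{\mathbf{k'}}(e^{-t})$ to grow only logarithmically as $t\to0$, so the individual integrals still converge once $\re(s)$ is large. I would therefore first prove the identity in such a half-plane, and then extend it to all of $\mathbb{C}$ by analytic continuation: $\xi(\mathbf{k};s)$ is entire (by the method of Theorem~\ref{Th-Main-1}, cf.\ \cite{AK1999}), while each $\zeta(\mathbf{k'};s+j)$ continues meromorphically by \cite{AK1999} (see also \cite{AET2001}), so the two sides are meromorphic functions agreeing on an open set and hence everywhere.
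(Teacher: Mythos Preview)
Your proof is correct and follows essentially the same route as the paper: specialize Lemma~\ref{eulcon} at $z=e^{-t}$, reduce the all-ones polylogarithm to $t^j/j!$ via \eqref{2-2}, and plug into the defining integral of $\xi$. The only difference is cosmetic --- where the paper cites the integral representation $\zeta(\mathbf{k'};s)=\Gamma(s)^{-1}\int_0^\infty t^{s-1}(e^t-1)^{-1}\Li_{\mathbf{k'}}(e^{-t})\,dt$ from \cite[Proposition~2(i)]{AK1999}, you rederive it by expanding the series; and you add an explicit paragraph on convergence and analytic continuation that the paper leaves implicit.
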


\begin{proof} By setting $z=e^{-t}$ in the lemma and using 
\begin{equation}\label{2-2}
\Li_{\scriptsize{\underbrace{1,\ldots,1}_j}}(z)=\frac{(-\log(1-z))^j}{j!},
\end{equation} 
we have 
\[ \Li_{\bf k}(1-e^{-t})=\sum_{{\bf k'},\,j\ge0}c_{\bf k}({\bf k'};j)\frac{t^j}{j!}\Li_{\bf k'}(e^{-t}).
\]
Substituting this into the definition \eqref{xidef} of $\xi({\bf k};s)$ and using the formula (\cite[Proposition 2, (i)]{AK1999})
\[ \zeta({\bf k};s)=\frac1{\Gamma(s)}\int_0^\infty\frac{t^{s-1}}{e^t-1}\Li_{\bf k}(e^{-t})\,dt, \]
we immediately obtain the theorem.
\end{proof}

\begin{remark}  This theorem generalizes \cite[Theorem 8]{AK1999}, where the corresponding formula
for $\Li_{\scriptsize{\underbrace{1,\ldots,1}_{r-1},k} }(1-z)$ is
\begin{align*} 
\Li_{\scriptsize{\underbrace{1,\ldots,1}_{r-1},k} }(1-z) &= (-1)^{k-1} \sum_{j_1+\cdots+j_k=r+k\atop
\forall j_i\ge1} \Li_{\scriptsize{\underbrace{1,\ldots,1}_{j_k-1}} }(1-z)\Li_{j_1,\ldots,j_{k-1}}(z)\\
&+\sum_{j=0}^{k-2} (-1)^j \zeta(\underbrace{1,\ldots,1}_{r-1},k-j)\Li_{\scriptsize{\underbrace{1,\ldots,1}_{j}}}(z).
\end{align*}
As pointed out by Shu~Oi, one can deduce Lemma~\ref{eulcon} by induction using \cite[Prop.~5]{OU2}.
However,  to describe the right-hand side of the lemma explicitly is a different problem and neither proof
gives such a formula in general. See also \cite{Oi} for a related topic.  
\end{remark}

\begin{example}  Apart from the trivial case $(1,\ldots,1)$, examples of the identity in Lemma \ref{eulcon}
up to weight $4$ are:
\begin{eqnarray*}
\Li_2(1-z)&=&-\Li_2(z)-\Li_{1}(1-z)\Li_1(z)+\zeta(2),\\
\Li_3(1-z)&=&\Li_{1,2}(z)+\Li_{2,1}(z)+\Li_1(1-z)\Li_{1,1}(z)-\zeta(2)\Li_1(z)+\zeta(3),\\
\Li_{1,2}(1-z)&=&-\Li_3(z)-\Li_1(1-z)\Li_2(z)-\Li_{1,1}(1-z)\Li_1(z)+\zeta(3),\\
\Li_{2,1}(1-z)&=&2\Li_3(z)+\Li_1(1-z)\Li_2(z)+\zeta(2)\Li_1(1-z)-2\zeta(3),\\
\Li_4(1-z)&=&-\Li_{1,1,2}(z)-\Li_{1,2,1}(z)-\Li_{2,1,1}(z)-\Li_1(1-z)\Li_{1,1,1}(z)\\
&&\quad +\zeta(2)\Li_{1,1}(z)-\zeta(3)\Li_1(z)+\zeta(4),\\
\Li_{1,3}(1-z)&=&\Li_{1,3}(z)+\Li_{2,2}(z)+\Li_{3,1}(z)+\Li_1(1-z)\Li_{1,2}(z)+\Li_1(1-z)\Li_{2,1}(z)\\
&&\quad+\Li_{1,1}(1-z)\Li_{1,1}(z)-\zeta(3)\Li_1(z)+\frac14\zeta(4),\\
\Li_{2,2}(1-z)&=&-\Li_{2,2}(z)-2\Li_{3,1}(z)-\Li_1(1-z)\Li_{2,1}(z)-\zeta(2)\Li_1(1-z)\Li_1(z)\\
&&\quad-\zeta(2)\Li_2(z)+2\zeta(3)\Li_1(z)+\frac34\zeta(4),\\
\Li_{3,1}(1-z)&=&-2\Li_{1,3}(z)-\Li_{2,2}(z)-\Li_1(1-z)\Li_{1,2}(z)+\zeta(2)\Li_2(z)\\
&&\quad+\zeta(3)\Li_1(1-z)-\frac54\zeta(4),\\
\Li_{1,1,2}(1-z)&=&-\Li_4(z)-\Li_1(1-z)\Li_3(z)-\Li_{1,1}(1-z)\Li_2(z)\\
&&\quad -\Li_{1,1,1}(1-z)\Li_1(z)+\zeta(4),\\
\Li_{1,2,1}(1-z)&=&3\Li_4(z)+2\Li_1(1-z)\Li_3(z)+\Li_{1,1}(1-z)\Li_2(z)+\zeta(3)\Li_1(1-z)\\
&&\quad-3\zeta(4),\\
\Li_{2,1,1}(1-z)&=&-3\Li_4(z)-\Li_1(1-z)\Li_3(z)+\zeta(2)\Li_{1,1}(1-z)-2\zeta(3)\Li_1(1-z)\\
&&\quad+3\zeta(4).
\end{eqnarray*}

Accordingly, we have 
\begin{eqnarray*}
\xi(2;s)&=&-\zeta(2;s)-s\zeta(1;s+1)+\zeta(2)\zeta(s),\\
\xi(3;s)&=&\zeta(1,2;s)+\zeta(2,1;s)+s\zeta(1,1;s+1)-\zeta(2)\zeta(1;s)+\zeta(3)\zeta(s),\\
\xi(1,2;s)&=&-\zeta(3;s)-s\zeta(2;s+1)-\frac{s(s+1)}2\zeta(1;s+2)+\zeta(3)\zeta(s),\\
\xi(2,1;s)&=&2\zeta(3;s)+s\zeta(2;s+1)+\zeta(2)s\zeta(s+1)-2\zeta(3)\zeta(s),\\
\xi(4;s)&=&-\zeta(1,1,2;s)-\zeta(1,2,1;s)-\zeta(2,1,1;s)-s\zeta(1,1,1;s+1)\\
&&\quad +\zeta(2)\zeta(1,1;s)-\zeta(3)\zeta(1;s)+\zeta(4)\zeta(s),\\
\xi(1,3;s)&=&\zeta(1,3;s)+\zeta(2,2;s)+\zeta(3,1;s)+s\zeta(1,2;s+1)+s\zeta(2,1;s+1)\\
&&\quad+\frac{s(s+1)}2\zeta(1,1;s+2)-\zeta(3)\zeta(1;s)+\frac14\zeta(4)\zeta(s),\\
\xi(2,2;s)&=&-\zeta(2,2;s)-2\zeta(3,1;s)-s\zeta(2,1;s+1)-\zeta(2)s\zeta(1;s+1)\\
&&\quad-\zeta(2)\zeta(2;s)+2\zeta(3)\zeta(1;s)+\frac34\zeta(4)\zeta(s),\\
\xi(3,1;s)&=&-2\zeta(1,3;s)-\zeta(2,2;s)-s\zeta(1,2;s+1)+\zeta(2)\zeta(2;s)\\
&&\quad+\zeta(3)s\zeta(s+1)-\frac54\zeta(4)\zeta(s),\\
\xi(1,1,2;s)&=&-\zeta(4;s)-s\zeta(3;s+1)-\frac{s(s+1)}2\zeta(2;s+2)\\
&&\quad -\frac{s(s+1)(s+2)}6\zeta(1;s+3)+\zeta(4)\zeta(s),\\
\xi(1,2,1;s)&=&3\zeta(4;s)+2s\zeta(3;s+1)+\frac{s(s+1)}2\zeta(2;s+2)+\zeta(3)s\zeta(s+1)\\
&&\quad-3\zeta(4)\zeta(s),\\
\xi(2,1,1;s)&=&-3\zeta(4;s)-s\zeta(3;s+1)+\zeta(2)\frac{s(s+1)}2\zeta(s+2)-2\zeta(3)s\zeta(s+1)\\
&&\quad+3\zeta(4)\zeta(s).
\end{eqnarray*}
From these and \eqref{etakxi} of Corollary~\ref{coretaxi}, we have for instance
\begin{eqnarray*}
\eta_2(s)&=&\xi(2;s)+\xi(1,1;s)\\
&=&-\zeta(2;s)-s\zeta(1;s+1)+\zeta(2)\zeta(s)+\frac{s(s+1)}2\zeta(s+2),\\
\eta_3(s)&=&\xi(3;s)+\xi(1,2;s)+\xi(2,1;s)+\xi(1,1,1;s)\\
&=& \zeta(3;s)+\zeta(1,2;s)+\zeta(2,1;s)+s\zeta(1,1;s+1)-\frac{s(s+1)}2\zeta(1;s+2)\\
&&\quad-\zeta(2)\zeta(1;s)+\zeta(2)s\zeta(s+1)+\frac{s(s+1)(s+2)}6\zeta(s+3),\\
\eta_4(s)&=&\xi(4;s)+\xi(1,3;s)+\xi(2,2;s)+\xi(3,1;s)+\xi(1,1,2;s)+\xi(1,2,1;s)\\
&&\quad+\xi(2,1,1;s)+\xi(1,1,1,1;s)\\
&=& -\zeta(4;s)-\zeta(1,3;s)-\zeta(2,2;s)-\zeta(3,1;s)-\zeta(1,1,2;s)-\zeta(1,2,1;s)\\
&&\quad-\zeta(2,1,1;s)-s\zeta(1,1,1;s+1)+\zeta(2)\zeta(1,1;s)+\frac{s(s+1)}2\zeta(1,1;s+2)\\
&&\quad-\zeta(2)s\zeta(1;s+1)+\zeta(2)\frac{s(s+1)}2\zeta(s+2)
-\frac{s(s+1)(s+2)}6\zeta(1;s+3)\\
&&\quad+\frac74\zeta(4)\zeta(s)+\frac{s(s+1)(s+2)(s+3)}{24}\zeta(s+4).
 \end{eqnarray*}
\end{example}

Before closing this section, we present a curious observation.  Recall the formula
\[ \xi_k(m)=\zeta^\star(\underbrace{1,\ldots,1}_{m-1},k+1) \]
discovered by Ohno \cite{Ohno}.  Comparing this with the two formulas \eqref{3-8} and \cite[Corollary 10]{AK1999}, one may
expect $$\eta_k(m)\overset{?}{=}\zeta(\underbrace{1,\ldots,1}_{m-1},k+1).$$
This is not true in fact.  However, we found experimentally the identities
\begin{equation} \eta_k(m)=\eta_m(k) \label{etadual} \end{equation}
and 
\begin{equation} \sum_{j=1}^{k-1} (-1)^{j-1}\eta_{k-j}(j)=\begin{cases} 2(1-2^{1-k})\zeta(k)&\quad \text{($k$: even),}\\
0 &\quad \text{($k$: odd).} \end{cases} \label{etaLM} \end{equation}
These are respectively analogous to the duality relation  
\[\zeta(\underbrace{1,\ldots,1}_{m-1},k+1)
=\zeta(\underbrace{1,\ldots,1}_{k-1},m+1)\] and the relation
\[ \sum_{j=1}^{k-1} (-1)^{j-1}\zeta(\underbrace{1,\ldots,1}_{j-1},k-j+1)
=\begin{cases} 2(1-2^{1-k})\zeta(k)&\quad \text{($k$: even),}\\
0 &\quad \text{($k$: odd),} \end{cases} \]
which is a special case of the Le-Murakami relation \cite{LM} (or one can derive this from
the well-known generating series identity \cite{Ao}, \cite{Dr}
\[ 1-\sum_{k>j\geq1}\zeta(\underbrace{1,\dots,1}_{j-1},k-j+1)X^{k-j}\,Y^j
=\frac{\Gamma(1-X)\Gamma(1-Y)}{\Gamma(1-X-Y)}
\]
by setting $Y=-X$ and using the reflection formula for the gamma function.)

We are still not able to prove \eqref{etadual}\footnote{Quite recently, Shuji~Yamamoto communicated to the 
authors that he found a proof.}, but could prove \eqref{etaLM} by using the 
following general formula for the value $\xi({\bf k};m)$ and the relation \eqref{etabyxi} in Proposition~\ref{etaxi}.
For other aspects of `height one' multiple zeta values, see \cite{KS}.

\begin{proposition} Let ${\bf k}$ be any index and $m\ge1$ an integer.  Then we have
\begin{equation} \xi({\bf k};m)=(-1)^{m-1}\zeta^\sa({\bf k}_+,\underbrace{1,\ldots,1}_{m-1}), \label{xireg}
\end{equation}
where $\zeta^\sa$ stands for the `shuffle regularized' value, which is the constant term
of the shuffle regularized polynomial defined in \cite{IKZ}.
\end{proposition}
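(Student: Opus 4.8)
The plan is to turn $\xi({\bf k};m)$ into a genuinely convergent iterated integral on $[0,1]$, and then to match it, up to the sign $(-1)^{m-1}$, with the shuffle regularization of the non-admissible index $({\bf k}_+,1,\ldots,1)$ by a combinatorial induction. First I would rewrite the integrand of \eqref{xidef}. Putting $z=1-e^{-t}$ we have $-\log(1-z)=t$, so by \eqref{2-2} the prefactor is $t^{m-1}/(m-1)!=\Li_{\scriptsize{\underbrace{1,\ldots,1}_{m-1}}}(1-e^{-t})$; and since the last entry of ${\bf k}_+$ is $\ge2$, the first case of \eqref{2-1} together with the chain rule gives $\frac{d}{dt}\Li_{{\bf k}_+}(1-e^{-t})=\frac1{e^t-1}\Li_{\bf k}(1-e^{-t})$. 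Substituting both into $\xi({\bf k};m)$ (using $\Gamma(m)=(m-1)!$) and changing variables to $z=1-e^{-t}$ (so $t:0\to\infty$ becomes $z:0\to1$) yields the compact formula
\[ \xi({\bf k};m)=\int_0^1\Li_{\scriptsize{\underbrace{1,\ldots,1}_{m-1}}}(z)\,d\Li_{{\bf k}_+}(z). \]

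Next I would pass to iterated integrals in the one-forms $\omega_0=dz/z$ and $\omega_1=dz/(1-z)$. If $W$ is the word with $\Li_{\bf k}(z)=\int_0^z W$, then the word of ${\bf k}_+$ is $W\omega_0$, so $d\Li_{{\bf k}_+}(z)=\Li_{\bf k}(z)\,dz/z$, while $\Li_{\scriptsize{\underbrace{1,\ldots,1}_{m-1}}}(z)=\int_0^z\omega_1^{m-1}$. The shuffle-product rule for iterated integrals with common endpoints then gives
\[ \xi({\bf k};m)=\int_0^1\bigl(\omega_1^{m-1}\sh W\bigr)\,\omega_0, \]
in which every word begins with $\omega_1$ and ends with $\omega_0$, hence is a $\mathbb{Z}$-linear combination of honest (convergent) multiple zeta values.

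The final, and main, step is the word identity
\[ \mathrm{reg}\bigl(u\,\omega_1^{n}\bigr)=(-1)^{n}\,\bigl(\omega_1^{n}\sh W\bigr)\,\omega_0\qquad(u:=W\omega_0,\ n\ge0), \]
where $\mathrm{reg}$ is the shuffle-regularization projection of \cite{IKZ}. I would prove it by induction on the number $n$ of trailing $\omega_1$'s, uniformly over admissible words $u$. Iterating the last-letter recursion of $\sh$ unrolls $u\sh\omega_1^{n}=\sum_{i=0}^{n-1}A_{n-i}\,\omega_1^{i}+u\,\omega_1^{n}$, where $A_j:=(\omega_1^{j}\sh W)\omega_0$; since $\mathrm{reg}$ is a shuffle homomorphism with $\mathrm{reg}(\omega_1)=0$, one has $\mathrm{reg}(u\sh\omega_1^{n})=0$ for $n\ge1$. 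Applying $\mathrm{reg}$, feeding in the inductive hypothesis on the terms $A_{n-i}\omega_1^{i}$ (which carry fewer trailing $\omega_1$'s), and using $\omega_1^{a}\sh\omega_1^{b}=\binom{a+b}{a}\omega_1^{a+b}$ reduces the whole computation to the cancellation $\sum_{i=0}^{n-1}(-1)^i\binom ni=-(-1)^n$, which supplies exactly the sign. Taking $n=m-1$ and replacing each admissible word by its multiple zeta value then gives \eqref{xireg}.

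The hard part will be the careful bookkeeping in this last step: the induction must be phrased uniformly over all admissible words, so that the heavier words occurring inside $A_{n-i}$ are covered, and the regularization convention of \cite{IKZ} (the constant term in the regularizing variable, here associated with the divergent trailing factors $\omega_1$) must be correctly aligned with ours. As a consistency check, in the rank-one case $\xi(1;s)=s\zeta(s+1)$ forces, for $m=2$, the relation $2\zeta(3)=-\zeta^\sa(2,1)$, which agrees with $\zeta^\sa(2,1)=-2\zeta(1,2)$ and Euler's $\zeta(1,2)=\zeta(3)$.
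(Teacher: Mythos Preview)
Your argument is correct and follows essentially the same route as the paper: after the change of variable $z=1-e^{-t}$ you arrive at exactly the integral $\int_0^1\Li_{1,\ldots,1}(z)\Li_{\bf k}(z)\,\frac{dz}{z}$ that the paper writes down, and then identify it with the shuffle-regularized value via the same shuffle-product mechanism. The only difference is that the paper invokes \cite[Eq.~(5.2)]{IKZ} directly for the regularization identity, whereas you supply a self-contained inductive proof of that identity (your computation $\mathrm{reg}(A_{n-i}\omega_1^i)=(-1)^i\binom ni A_n$ and the alternating-sum cancellation are correct); this makes your write-up longer but independent of the cited formula.
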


\begin{proof}  By making the change of variable $x=1-e^{-t}$ in the definition \eqref{xidef}, we have
\[ \xi({\bf k};s) = \frac1{\Gamma(s)}\int_0^1\bigl(-\log(1-x)\bigr)^{s-1}\Li_{\bf k}(x)\,\frac{dx}{x}. \]
Put $s=m$ and use \eqref{2-2} to obtain 
\[ \xi({\bf k};m) = \int_0^1\Li_{\scriptsize{\underbrace{1,\ldots,1}_{m-1}}}(x)\Li_{\bf k}(x)\,\frac{dx}{x}. \]
The regularization formula \cite[Eq. (5.2)]{IKZ}, together with the shuffle product of 
$\Li_{\scriptsize{\underbrace{1,\ldots,1}_{m-1}}}(x)\Li_{\bf k}(x)$, immediately gives \eqref{xireg}.
\end{proof}

By using \eqref{xireg} and \eqref{etakxi}, we can write $\eta_k(m)$ in terms of shuffle regularized values.
The following expression seems to follow from that formula by taking the dual, but we have not yet worked it out
in detail.
\[ \eta_k(m)\overset{?}{=}\binom{m+k}{k}\zeta(m+k)-\!\!\!\sum_{2\le r\le k+1\atop 
j_1+\cdots+j_r=m+k-r-1}\binom{j_1+\cdots+j_{r-1}}{k-r+1}\zeta(j_1+1,\cdots,j_{r-1}+1,j_r+2).
\]

\ 

\section{The function $\eta(\kk;s)$ for non-positive indices}\label{sec-4}

In this section, as in the case of positive indices, we construct $\eta$-functions with non-positive indices. 
It is known that $\Li_{-k}(z)$ can be expressed as 
$$\Li_{-k}(z)=\frac{P(z;k)}{(1-z)^{k+1}}$$
for $k\in \mathbb{Z}_{\geq 0}$, where $P(x;k)\in \mathbb{Z}[x]$ is a monic polynomial satisfying
\begin{align*}
& {\rm deg}\,P(x;k) =
\begin{cases}
1 & (k=0)\\
k & (k\geq 1),
\end{cases}
\\
& x \mid P(x;k)
\end{align*}
(see, for example, Shimura \cite[Equations (2.17),\,(4.2) and (4.6)]{Shimura};\ Note that the above $P(x;k)$ 
coincides with $xP_{k+1}(x)$ in \cite{Shimura}). 
We first extend this fact to multiple polylogarithms with non-positive indices as follows. 

\begin{lemma}\label{L-4-1} \ For $k_1,\ldots,k_r\in \mathbb{Z}_{\geq 0}$, there exists a polynomial 
$P(x;\kk)\in \mathbb{Z}[x]$ such that
\begin{align}
& \Li_{\km}(z)=\frac{P(z;\kk)}{(1-z)^{k_1+\cdots+k_r+r}}, \label{4-1}\\
& {\rm deg}\,P(x;\kk)=
\begin{cases}
r & (k_1=\cdots=k_r=0)\\
k_1+\cdots+k_r+r-1 & (\text{\rm otherwise}),
\end{cases}
\label{4-2}\\
& x^r\mid P(x;\kk).\label{4-3}
\end{align}
More explicitly, $P(x;\underbrace{0,0,\ldots,0}_{r})=x^r$.
\end{lemma}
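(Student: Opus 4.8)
The plan is to prove Lemma~\ref{L-4-1} by induction on the weight $k_1+\cdots+k_r$, using the differentiation formula \eqref{2-1} as the engine, exactly as in the single-variable case recalled just above the statement. The base cases are the all-zero indices: since $\Li_0(z)=z/(1-z)$ and, more generally, $\Li_{0,\ldots,0}(z)=\sum_{1\le m_1<\cdots<m_r}z^{m_r}=\binom{z}{1-z}$-type telescoping sum, a direct computation (or an easy induction using \eqref{2-1} with $k_r=1$ read backwards) gives $\Li_{\underbrace{\scriptstyle 0,\ldots,0}_{r}}(z)=z^r/(1-z)^r$, which establishes the explicit formula $P(x;\underbrace{0,\ldots,0}_r)=x^r$ and simultaneously checks \eqref{4-1}, \eqref{4-2}, and \eqref{4-3} in this case.

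For the inductive step I would use \eqref{2-1} in the form that \emph{raises} the last index: namely, for indices with non-positive entries, $\frac{d}{dz}\Li_{-k_1,\ldots,-k_r}(z)$ equals $\frac1z\Li_{-k_1,\ldots,-k_r+1}(z)$ (when $k_r\ge1$, lowering $-k_r$ by passing from $-k_r$ to $-k_r+1$) or $\frac1{1-z}\Li_{-k_1,\ldots,-k_{r-1}}(z)$ (when $k_r=0$). Reading these relations as recursions that express a higher-weight polylogarithm as an integral (equivalently, as an antiderivative) of a lower-weight one, the induction hypothesis supplies a polynomial for the right-hand side, and I recover $\Li_{\km}(z)$ by integrating. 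The key structural point is that $\frac{d}{dz}\bigl(P(z)/(1-z)^{N}\bigr)=\bigl(P'(z)(1-z)+N P(z)\bigr)/(1-z)^{N+1}$, so the ansatz \eqref{4-1} is preserved by this operation, with the denominator exponent incrementing by one precisely in step with $k_1+\cdots+k_r+r$; one then solves for the numerator polynomial of $\Li_{\km}(z)$ by matching this derivative against the known lower-weight expression and integrating back.

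The two quantitative claims, the degree formula \eqref{4-2} and the divisibility $x^r\mid P(x;\kk)$ in \eqref{4-3}, are then tracked along the induction. For \eqref{4-3}: the Taylor expansion of $\Li_{\km}(z)$ at $z=0$ starts with the monomial $z^r/(1^{-k_1}\cdots r^{-k_r})$ coming from the depth-$r$ summation range $1\le m_1<\cdots<m_r$, so $\Li_{\km}(z)=O(z^r)$ as $z\to0$; since $(1-z)^{k_1+\cdots+k_r+r}$ is a unit at $z=0$, the numerator $P(z;\kk)$ must vanish to order at least $r$, giving $x^r\mid P(x;\kk)$. For \eqref{4-2} one compares the degree of the numerator on both sides of each recursion; the antiderivative operation and multiplication by the extra factor $z$ or $(1-z)$ increase the numerator degree by exactly one in the generic (not all-zero) case, matching the claimed $k_1+\cdots+k_r+r-1$, while the all-zero case is the exceptional base already handled.

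The main obstacle I anticipate is bookkeeping the integration constants and ensuring monicity of $P(x;\kk)$ is inherited at each step: integrating a rational function of the form $Q(z)/(1-z)^{N}$ does not by itself return something of the ansatz shape, so I must verify that the specific combinations arising from \eqref{2-1} (where the $1/z$ or $1/(1-z)$ prefactor is tuned to the polylogarithm recursion) integrate cleanly back into the form $P(z;\kk)/(1-z)^{k_1+\cdots+k_r+r}$ with no logarithmic or spurious terms, and that the leading coefficient stays $1$. Pinning down the integration constant via the boundary condition $\Li_{\km}(0)=0$ together with the order-$r$ vanishing from \eqref{4-3} is what makes the recursion well-posed and keeps the numerator a genuine polynomial rather than a rational function.
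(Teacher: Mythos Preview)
Your recursion is pointed the wrong way. Formula \eqref{2-1} says that differentiation \emph{lowers} the last index by $1$: for any $k_r\in\mathbb{Z}_{\le 0}$ one has
\[
\frac{d}{dz}\Li_{-k_1,\ldots,-k_r}(z)=\frac{1}{z}\,\Li_{-k_1,\ldots,-k_r-1}(z)=\frac{1}{z}\,\Li_{-k_1,\ldots,-(k_r+1)}(z),
\]
so the weight $k_1+\cdots+k_r$ goes \emph{up} under $d/dz$, not down. Read correctly, this gives the identity the paper actually uses,
\[
\Li_{-k_1,\ldots,-k_r}(z)=z\,\frac{d}{dz}\,\Li_{-k_1,\ldots,-(k_r-1)}(z)\qquad(k_r\ge1),
\]
which expresses the higher-weight polylogarithm as a \emph{derivative} of the lower-weight one. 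You then simply compute $z\frac{d}{dz}\bigl(P(z)/(1-z)^{N}\bigr)$ using the quotient rule, and the ansatz \eqref{4-1} is preserved with $N\mapsto N+1$. No integration is needed, and the obstacle you flagged---integration constants and the risk of logarithmic terms---evaporates.

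There are two further slips. First, the case $k_r=0$ is \emph{not} covered by the $k_r=1$ branch of \eqref{2-1}; instead one sums the innermost geometric series directly to obtain $\Li_{-k_1,\ldots,-k_{r-1},0}(z)=\dfrac{z}{1-z}\,\Li_{-k_1,\ldots,-k_{r-1}}(z)$, which reduces depth (not weight) and feeds into a double induction on $r$ and on $K=k_1+\cdots+k_r$, as the paper does. Second, $P(x;\kk)$ is not monic in general for $r\ge2$ (e.g.\ $P(x;0,1)=2x^2$), so do not try to track monicity through the induction. Your argument for \eqref{4-3} via the order-$r$ vanishing of $\Li_{\km}(z)$ at $z=0$ is fine and in fact cleaner than what the paper writes.
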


\begin{proof}
We prove this lemma by the double induction on $r\geq 1$ and $K=k_1+\cdots+k_r\geq 0$. 
The case $r=1$ is as mentioned above. For $r\geq 2$, we assume the case of $r-1$ holds and consider the case of $r$. 
When $K=k_1+\cdots+k_r=0$, namely $k_1=\cdots=k_r=0$, we have
\begin{align*}
\Li_{0,\ldots,0}(z)&=\sum_{m_1<\cdots<m_r}x^{m_r}=\sum_{m_1<\cdots<m_{r-1}}\sum_{m_r=m_{r-1}+1}^\infty z^{m_r}\\
&=\frac{z}{1-z}\sum_{m_1<\cdots<m_{r-1}}z^{m_{r-1}}=\cdots=\frac{z^r}{(1-z)^r},
\end{align*}
which implies \eqref{4-1}--\eqref{4-3} hold, and also $P(x;{0,\ldots,0})=x^r$. Hence we assume the case $K=k_1+\cdots+k_r-1$ holds and consider the case $K=k_1+\cdots+k_r(\geq 1)$. We consider the two cases  $k_r=0$ and $k_r\geq 1$ separately. 
First we assume $k_r=0$. Then, by induction hypothesis, we have 
\begin{align*}
\Li_{-k_1,\ldots,-k_{r-1},0}(z)&=\sum_{m_1<\cdots<m_{r-1}}m_1^{k_1}\cdots m_{r-1}^{k_{r-1}}\sum_{m_r=m_{r-1}+1}^\infty z^{m_r}\\
& =\frac{z}{1-z}\sum_{m_1<\cdots<m_{r-1}}m_1^{k_1}\cdots m_{r-1}^{k_{r-1}}z^{m_{r-1}}\\
& =\frac{z}{1-z}\,\frac{P(z:k_1,\ldots,k_{r-1})}{(1-z)^{k_1+\cdots+k_{r-1}+r-1}}.
\end{align*}
Let $P(z;k_1,\ldots,k_{r-1},0)=zP(z;k_1,\ldots,k_{r-1})$. 
Then \eqref{4-1}--\eqref{4-3} hold.

Next we assume $k_r\geq 1$. Then, using the same formula as in \eqref{2-1} and the induction hypothesis, we have 
\begin{align*}
&\Li_{-k_1,\ldots,-k_{r-1},-k_r}(z)=z\frac{d}{dz}\Li_{-k_1,\ldots,-k_r+1}(z)\\
&=z\frac{d}{dz}\left(\frac{P(z:k_1,\ldots,k_{r}-1)}{(1-z)^{k_1+\cdots+k_{r}-1+r}}\right)\\
&=\frac{z\left\{P'(z:k_1,\ldots,k_{r}-1)(1-z)+(k_1+\cdots+k_r-1+r)P(z;k_1,\ldots,k_r-1)\right\}}{(1-z)^{k_1+\cdots+k_{r}+r}}.
\end{align*}
If $k_1=\cdots=k_{r-1}=0$ and $k_r=1$, then the numerator, that is, $P(0,\ldots,0,-1)$ equals  $rz^r$, using the above results. If not, 
the degree of the numerator equals $k_1+\cdots+k_r+r-1$ by induction hypothesis. 
The both cases satisfy \eqref{4-1}--\eqref{4-3}. This completes the proof of the lemma.
\end{proof}

\begin{remark}
In the case $r\geq 2$, $P(x;\kk)$ is not necessarily a monic polynomial. For example, we have $\Li_{0,-1}(z)=2z^2/(1-z)^3$, 
so $P(x;0,1)=2x^2$.
\end{remark}

We obtain from \eqref{4-1} and \eqref{4-2} that
\begin{equation}
\Li_{\km}(1-e^t)=\frac{P(1-e^t;\kk)}{e^{(k_1+\cdots+k_r+r)t}} =
\begin{cases}
O(1) & (k_1=\cdots=k_r=0)\\
O(e^{-t}) & (\text{\rm otherwise})
\end{cases}
\label{4-4}
\end{equation}
as $t\to \infty$, and from \eqref{4-3} that 
\begin{equation}
\Li_{\km}(1-e^t)=O(t^{r}) \quad (t\to 0).\label{4-4-2}
\end{equation}
Therefore we can define the following.

\begin{definition}\label{Def-Main-2} 
For $k_1,\ldots,k_r\in \mathbb{Z}_{\geq 0}$, define 
\begin{equation}
\eta(\km;s)=\frac{1}{\Gamma(s)}\int_{0}^\infty t^{s-1}\frac{\Li_{\km}(1-e^t)}{1-e^t}dt \label{4-5}
\end{equation}
for $s\in \mathbb{C}$ with ${\rm Re}(s)>1-r$. In the case $r=1$, denote $\eta(-k;s)$ by $\eta_{-k}(s)$.
\end{definition}

We see that the integral on the right-hand side of \eqref{4-5} is absolutely convergent for ${\rm Re}(s)>1-r$. Hence 
$\eta(\km;s)$ is holomorphic for ${\rm Re}(s)>1-r$. By the same method as in the proof of Theorem \ref{Th-Main-1} 
for $\eta(\kk;s)$, we can similarly obtain the following.

\begin{theorem}\label{Th-Main-2} 
For $k_1,\ldots,k_r\in \mathbb{Z}_{\geq 0}$, 
$\eta(\km;s)$ can be analytically continued to an entire function on the whole complex plane, and satisfies 
\begin{equation}
\eta(\km;-m)=\bb_{m}^{(\km)}\quad (m\in \mathbb{Z}_{\geq 0}). \label{4-6}
\end{equation}
In particular, $\eta_{-k}(-m)=\bb_{m}^{(-k)}$ $(k\in \mathbb{Z}_{\geq 0},\ m\in \mathbb{Z}_{\geq 0})$. 
\end{theorem}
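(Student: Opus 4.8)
The plan is to run the contour-integral argument of Theorem~\ref{Th-Main-1} almost verbatim, with Lemma~\ref{L-4-1} and the estimates \eqref{4-4} and \eqref{4-4-2} playing the role that Lemma~\ref{L-2-3} played in the positive-index case. First I would take the same Hankel-type contour $\mathcal{C}$ (the positive real axis from $+\infty$ in to a small $\varepsilon$, the counterclockwise circle $C_\varepsilon$ of radius $\varepsilon$ about the origin, and the positive real axis back out to $+\infty$) and set
\[
H(\km;s)=\int_\mathcal{C}t^{s-1}\frac{\Li_{\km}(1-e^t)}{1-e^t}\,dt
=(e^{2\pi i s}-1)\int_\varepsilon^\infty t^{s-1}\frac{\Li_{\km}(1-e^t)}{1-e^t}\,dt+\int_{C_\varepsilon}t^{s-1}\frac{\Li_{\km}(1-e^t)}{1-e^t}\,dt.
\]
By Lemma~\ref{L-4-1} the numerator $\Li_{\km}(1-e^t)=P(1-e^t;\kk)/e^{(k_1+\cdots+k_r+r)t}$ is \emph{entire} in $t$, so the only singularities of the integrand arise from the factor $1/(1-e^t)$ at $t\in 2\pi i\mathbb{Z}$, none of which lie on $\mathcal{C}$. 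Using \eqref{4-4} for exponential decay as $t\to\infty$ and \eqref{4-4-2} near $t=0$, the contour integral converges absolutely for every $s$, so $H(\km;s)$ is entire.

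Next I would restrict to $\mathrm{Re}(s)>1-r$ and let $\varepsilon\to0$. On $C_\varepsilon$ the estimate \eqref{4-4-2} together with $1-e^t\sim -t$ gives $\Li_{\km}(1-e^t)/(1-e^t)=O(t^{r-1})$, whence the circle integral is $O\bigl(\varepsilon^{\mathrm{Re}(s)+r-1}\bigr)\to0$. Therefore $\eta(\km;s)=H(\km;s)\big/\bigl((e^{2\pi i s}-1)\Gamma(s)\bigr)$ on this half-plane, and the right-hand side furnishes the analytic continuation to all of $\mathbb{C}$. Since $1/\Gamma(s)$ vanishes at the non-positive integers while $\eta(\km;s)$ is already holomorphic on $\mathrm{Re}(s)>1-r$ (in particular at the positive integers) directly from \eqref{4-5}, the apparent poles of $1/(e^{2\pi i s}-1)$ are all cancelled, and so $\eta(\km;s)$ is entire.

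Finally, to evaluate at $s=-m$, I would invoke the generating function \eqref{1-6} for the index $(\km)$ with the substitution $t\mapsto -t$, namely $\Li_{\km}(1-e^t)/(1-e^t)=\sum_{n\ge0}\bb_n^{(\km)}(-t)^n/n!$, which is legitimate because Lemma~\ref{L-4-1} guarantees this quotient is holomorphic at $t=0$. At $s=-m$ the top- and bottom-side contributions cancel, only the $C_\varepsilon$-integral survives, and extracting the residue at $t=0$ (the $n=m$ term) yields, exactly as in Theorem~\ref{Th-Main-1},
\[
\eta(\km;-m)=\frac{(-1)^m m!}{2\pi i}\int_{C_\varepsilon}t^{-m-1}\sum_{n\ge0}\bb_n^{(\km)}\frac{(-t)^n}{n!}\,dt=\bb_m^{(\km)}.
\]
I do not expect a genuine obstacle, as the argument is structurally identical to that of Theorem~\ref{Th-Main-1}; the one point demanding care is the simultaneous bookkeeping of \eqref{4-4} and \eqref{4-4-2}. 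These two estimates must jointly secure the absolute convergence of $H(\km;s)$ for all $s$ (via the decay at infinity), the vanishing of the $C_\varepsilon$-integral \emph{precisely} on $\mathrm{Re}(s)>1-r$ (via the $O(t^{r-1})$ behaviour at the origin), and the holomorphy at $t=0$ that legitimizes the term-by-term residue extraction.
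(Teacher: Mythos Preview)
Your argument is correct and is exactly what the paper intends: the authors simply remark that the proof of Theorem~\ref{Th-Main-1} carries over verbatim once Lemma~\ref{L-4-1} and the estimates \eqref{4-4}, \eqref{4-4-2} replace Lemma~\ref{L-2-3}. Your writeup fills in precisely those substitutions, and the bookkeeping you flag (decay at infinity from \eqref{4-4}, the $O(t^{r-1})$ bound near the origin from \eqref{4-4-2}, and cancellation of the apparent poles at integers) is all handled correctly.
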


It should be noted that $\xi(\km;s)$ cannot be defined by replacing $\{ k_j\}$ by $\{ -k_j\}$ in \eqref{xidef}. 
In fact, even if $r=1$ and $k=0$ in \eqref{xidef}, 
we see that 
$$\xi_0(s)=\frac{1}{\Gamma(s)}\int_{0}^\infty t^{s-1}\frac{\Li_0(1-e^{-t})}{e^t-1}dt=
\frac{1}{\Gamma(s)}\int_{0}^\infty t^{s-1}dt,$$
which is not convergent for any $s\in \mathbb{C}$. Therefore we modify the definition \eqref{xidef} 
as follows.

\begin{definition}
For $k_1,\ldots,k_r\in \mathbb{Z}_{\geq 0}$ with $(k_1,\ldots,k_r)\neq (0,\ldots,0)$, define 
\begin{equation}
\widetilde{\xi}(\km;s)=\frac{1}{\Gamma(s)}\int_{0}^\infty t^{s-1}\frac{\Li_{\km}(1-e^t)}{e^{-t}-1}dt \label{4-5-2}
\end{equation}
for $s\in \mathbb{C}$ with ${\rm Re}(s)>1-r$. In the case $r=1$, denote $\widetilde{\xi}(-k;s)$ by 
$\widetilde{\xi}_{-k}(s)$ for $k\geq 1$.
\end{definition}

We see from \eqref{4-4} and \eqref{4-4-2} that \eqref{4-5-2} is well-defined. 
\if0
In fact, by \eqref{4-1} and \eqref{4-2}, we see that
\begin{equation*}
\Li_{\km}(1-e^t)=
\begin{cases}
O(e^{-t}) & (t\to \infty)\\
O(t) & (t\to 0)
\end{cases}
\end{equation*}
for $k_1,\ldots,k_r\in \mathbb{Z}_{\geq 0}$ with $(k_1,\ldots,k_r)\neq (0,\ldots,0)$. 
\fi
Also it is noted that $\widetilde{\xi}(\kk;s)$ cannot be defined by replacing $\{-k_j\}$ by $\{k_j\}$ in \eqref{4-5-2} for $(k_j)\in \mathbb{Z}_{\geq 1}^r$.

In a way parallel to deriving Theorem \ref{Th-Main-2}, we can obtain the following.

\begin{theorem}\label{Th-Main-2-2} 
For $k_1,\ldots,k_r\in \mathbb{Z}_{\geq 0}$ with $(k_1,\ldots,k_r)\neq (0,\ldots,0)$, 
$\widetilde{\xi}(\km;s)$ can be analytically continued to an entire function on the whole complex plane, and satisfies 
\begin{equation}
\widetilde{\xi}(\km;-m)=\cc_{m}^{(\km)}\quad (m\in \mathbb{Z}_{\geq 0}). \label{4-6-2}
\end{equation}
In particular, $\widetilde{\xi}_{-k}(-m)=\cc_{m}^{(-k)}$ $(k\in \mathbb{Z}_{\geq 1},\ m\in \mathbb{Z}_{\geq 0})$. 
\end{theorem}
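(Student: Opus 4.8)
The plan is to reproduce verbatim the contour-integral argument used for Theorem~\ref{Th-Main-1}, with the integrand $\dfrac{\Li_{\km}(1-e^t)}{e^{-t}-1}$ in place of $\dfrac{\Li_{\kk}(1-e^t)}{1-e^t}$, and with the generating function \eqref{1-6-2} for $\cc_n^{(\km)}$ (indices replaced by $-k_i$) replacing \eqref{1-6}. Concretely, I would first introduce, along the same Hankel contour $\mathcal{C}$ (top side, circle $C_\varepsilon$ of radius $\varepsilon$, bottom side) as before,
\begin{align*}
\widetilde{H}(\km;s)&=\int_{\mathcal{C}}t^{s-1}\frac{\Li_{\km}(1-e^t)}{e^{-t}-1}\,dt\\
&=(e^{2\pi i s}-1)\int_{\varepsilon}^\infty t^{s-1}\frac{\Li_{\km}(1-e^t)}{e^{-t}-1}\,dt+\int_{C_\varepsilon}t^{s-1}\frac{\Li_{\km}(1-e^t)}{e^{-t}-1}\,dt.
\end{align*}

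The crucial analytic input is the pair of estimates \eqref{4-4} and \eqref{4-4-2}, already recorded from Lemma~\ref{L-4-1}: since $\Li_{\km}(1-e^t)=O(e^{-t})$ as $t\to\infty$ and $=O(t^r)$ as $t\to0$, while $1/(e^{-t}-1)\sim -1/t$ near the origin and is bounded at infinity, the integrand decays exponentially at infinity (for every $s$) and is $O\bigl(t^{\re(s)+r-2}\bigr)$ near the origin. Hence the contour integral converges absolutely for all $s\in\mathbb{C}$, so $\widetilde{H}(\km;s)$ is entire; and for $\re(s)>1-r$ the circle contribution is $O\bigl(\varepsilon^{\re(s)+r-1}\bigr)\to0$ as $\varepsilon\to0$, yielding
\[ \widetilde{\xi}(\km;s)=\frac{1}{(e^{2\pi i s}-1)\Gamma(s)}\,\widetilde{H}(\km;s)\qquad(\re(s)>1-r). \]
Because $\widetilde{\xi}$ is already holomorphic on $\re(s)>1-r$ by its integral definition \eqref{4-5-2}, and because $1-r\le0$ puts every positive integer inside this half-plane, the only candidate poles of the right-hand side (the poles of $1/(e^{2\pi i s}-1)$ at positive integers) must be removable; thus $\widetilde{\xi}(\km;s)$ continues to an entire function.

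For the values at non-positive integers I would set $s=-m$. The factor $e^{2\pi i s}-1$ vanishes there, annihilating the straight-line part and leaving $\widetilde{H}(\km;-m)=\int_{C_\varepsilon}t^{-m-1}\frac{\Li_{\km}(1-e^t)}{e^{-t}-1}\,dt$, while the local expansions $\frac1{\Gamma(s)}\sim(-1)^m m!\,(s+m)$ and $e^{2\pi i s}-1\sim 2\pi i\,(s+m)$ give $\widetilde{\xi}(\km;-m)=\frac{(-1)^m m!}{2\pi i}\,\widetilde{H}(\km;-m)$. The one genuinely new step is to identify the integrand with the $\cc_n^{(\km)}$: replacing $t$ by $-t$ in \eqref{1-6-2} (with indices $-k_i$) gives
\[ \frac{\Li_{\km}(1-e^{t})}{e^{-t}-1}=\sum_{n=0}^\infty \cc_n^{(\km)}\frac{(-t)^n}{n!}, \]
a power series convergent near $t=0$ (well-defined since $r\ge1$ makes the ratio holomorphic at the origin). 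Substituting this series, interchanging sum and integral on $C_\varepsilon$, and extracting the residue term $n=m$ via $\int_{C_\varepsilon}t^{-1}\,dt=2\pi i$ yields $\widetilde{H}(\km;-m)=\frac{(-1)^m}{m!}\cc_m^{(\km)}\cdot 2\pi i$, whence $\widetilde{\xi}(\km;-m)=\cc_m^{(\km)}$, which is \eqref{4-6-2}.

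Since the argument runs strictly parallel to Theorem~\ref{Th-Main-1} (and Theorem~\ref{Th-Main-2}), I do not anticipate a serious obstacle; the only points requiring care are the sign and change-of-variable bookkeeping that converts $\frac{\Li_{\km}(1-e^{-t})}{e^t-1}$ into the integrand $\frac{\Li_{\km}(1-e^t)}{e^{-t}-1}$ (keeping track of both the $z=1-e^{t}$ versus $1-e^{-t}$ swap built into \eqref{4-5-2} and the resulting $(-t)^n$), together with the verification that the apparent poles at positive integers are removable, which rests on $1-r\le0$ placing all positive integers inside the region of convergence of the defining integral.
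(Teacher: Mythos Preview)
Your proposal is correct and follows exactly the approach the paper indicates: the paper gives no independent proof of Theorem~\ref{Th-Main-2-2} but simply states it is obtained ``in a way parallel to deriving Theorem~\ref{Th-Main-2}'', which in turn refers back to the contour-integral proof of Theorem~\ref{Th-Main-1}. Your handling of the estimates \eqref{4-4}--\eqref{4-4-2}, the removability of the apparent poles at positive integers, and the sign bookkeeping via $t\mapsto -t$ in \eqref{1-6-2} are all in order.
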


Next we give certain duality formulas for $\bb_n^{(\kk)}$ which is a generalization of \eqref{1-4}. 
To state this, we define another type of multi-poly-Bernoulli numbers by
\begin{equation}
\begin{split}
& \sum_{a=0}^{r-1}(-1)^a\binom{r-1}{a} \sum_{l_1,\ldots,l_r\geq 1}\frac{\prod_{j=1}^{r}\left(1-e^{-\sum_{\nu=j}^{r}x_\nu}\right)^{l_j-1}}{(l_1+\cdots+l_r-a)^s}\\
& \quad =\sum_{m_1,\ldots,m_r\geq 0}\ba_{m_1,\ldots,m_r}^{(s)}\frac{x_1^{m_1}\cdots x_r^{m_r}}{m_1! \cdots m_r!}
\end{split}
 \label{4-7}
\end{equation}
for $s\in \mathbb{C}$. In the case $r=1$, we see that $\ba_{m}^{(k)}=B_{m}^{(k)}$ for $k\in \mathbb{Z}$. 
Then we obtain the following result which is a kind of the duality formula. 
In fact, this coincides with \eqref{1-4} in the case $r=1$.

\begin{theorem}\label{Th-4-6}\ \ For $\kk \in \mathbb{Z}_{\geq 0}$, 
\begin{equation}
\eta(\km;s)=\ba_{\kk}^{(s)}.   \label{4-8}
\end{equation}
Therefore, for $m\in \mathbb{Z}_{\geq 0}$,
\begin{equation}
B_m^{(\km)}=\ba_{\kk}^{(-m)}.  \label{4-9}
\end{equation}
\end{theorem}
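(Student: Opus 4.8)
The plan is to establish the generating-function identity \eqref{4-8} first, and then obtain \eqref{4-9} by specialization. Note that, for each fixed $(\kk)$, the coefficient $\ba_{\kk}^{(s)}$ extracted from the left-hand side of \eqref{4-7} is a \emph{finite} $\mathbb{Z}$-linear combination of terms $(l_1+\cdots+l_r-a)^{-s}$: the factor $(1-e^{-\sum_{\nu=j}^r x_\nu})^{l_j-1}$ has $x$-order at least $l_j-1$, so only finitely many $(l_1,\ldots,l_r,a)$ contribute to a given total degree. Hence $s\mapsto\ba_{\kk}^{(s)}$ is entire. Since $\eta(\km;s)$ is also entire by Theorem \ref{Th-Main-2}, it suffices to verify $\eta(\km;s)=\ba_{\kk}^{(s)}$ on the half-plane $\re(s)>0$ and then invoke analytic continuation.

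First I would compute the generating function of the multiple polylogarithms of non-positive index. Writing $y_j=\sum_{\nu=j}^r x_\nu$, from $\Li_{\km}(z)=\sum_{1\le m_1<\cdots<m_r}m_1^{k_1}\cdots m_r^{k_r}z^{m_r}$, from $\sum_{k\ge0}(mx)^k/k!=e^{mx}$, and from summing the geometric series in the gap variables $m_i-m_{i-1}\ge1$, I obtain
\[
\sum_{k_1,\ldots,k_r\ge0}\Li_{\km}(z)\frac{x_1^{k_1}\cdots x_r^{k_r}}{k_1!\cdots k_r!}
=\sum_{1\le m_1<\cdots<m_r}z^{m_r}e^{m_1x_1+\cdots+m_rx_r}
=\prod_{j=1}^r\frac{ze^{y_j}}{1-ze^{y_j}},
\]
an identity of rational functions in $z$ (valid for $|z|$ small, hence as formal power series in the $x_j$ for every $z$). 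Substituting $z=1-e^t$ and simplifying each factor as $\frac{(1-e^t)e^{y_j}}{1-(1-e^t)e^{y_j}}=\frac{-(e^t-1)}{e^t-1+e^{-y_j}}$, the prefactor $1/(1-e^t)$ in Definition \ref{Def-Main-2} combines with the resulting $(-1)^r(e^t-1)^r$ to give
\[
\frac1{1-e^t}\sum_{k_1,\ldots,k_r\ge0}\Li_{\km}(1-e^t)\frac{x_1^{k_1}\cdots x_r^{k_r}}{k_1!\cdots k_r!}
=(1-e^t)^{r-1}\prod_{j=1}^r\frac1{e^t-1+e^{-y_j}}.
\]

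Next I would transform the left-hand side of \eqref{4-7}. For $\re(s)>0$ I use $N^{-s}=\Gamma(s)^{-1}\int_0^\infty t^{s-1}e^{-Nt}\,dt$ with $N=l_1+\cdots+l_r-a\ge1$, carry out the finite binomial sum $\sum_{a=0}^{r-1}(-1)^a\binom{r-1}{a}e^{at}=(1-e^t)^{r-1}$, and sum the geometric series $\sum_{l_j\ge1}(1-e^{-y_j})^{l_j-1}e^{-l_jt}=1/(e^t-1+e^{-y_j})$. This yields
\[
\sum_{m_1,\ldots,m_r\ge0}\ba_{m_1,\ldots,m_r}^{(s)}\frac{x_1^{m_1}\cdots x_r^{m_r}}{m_1!\cdots m_r!}
=\frac1{\Gamma(s)}\int_0^\infty t^{s-1}(1-e^t)^{r-1}\prod_{j=1}^r\frac1{e^t-1+e^{-y_j}}\,dt,
\]
which coincides with $\sum_{k_1,\ldots,k_r\ge0}\eta(\km;s)\,x_1^{k_1}\cdots x_r^{k_r}/(k_1!\cdots k_r!)$, as seen by inserting the integrand identity of the preceding paragraph into Definition \ref{Def-Main-2} and summing. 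Comparing the coefficients of $x_1^{k_1}\cdots x_r^{k_r}/(k_1!\cdots k_r!)$ gives $\eta(\km;s)=\ba_{\kk}^{(s)}$ for $\re(s)>0$, whence for all $s\in\mathbb{C}$ by analytic continuation; this is \eqref{4-8}. Finally, setting $s=-m$ and using $\eta(\km;-m)=\bb_m^{(\km)}$ from Theorem \ref{Th-Main-2} gives $\bb_m^{(\km)}=\ba_{\kk}^{(-m)}$, which is \eqref{4-9}.

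The step I expect to be most delicate is the rigorous interchange of the $(x_j)$-coefficient extraction with the $t$-integration, together with the Fubini interchange of the $l$-summation with the integral. The clean way to handle this is to keep the $x_j$ as formal variables: a single coefficient of a fixed monomial receives contributions from only finitely many $(l_1,\ldots,l_r,a)$, so no genuine infinite interchange is needed coefficientwise, and the absolute convergence of each resulting $t$-integral follows from the estimates \eqref{4-4} and \eqref{4-4-2} of $\Li_{\km}(1-e^t)$ near $0$ and $\infty$. The only remaining point of care is that the closed form for $\prod_j ze^{y_j}/(1-ze^{y_j})$ is first derived where the geometric series converge; but both sides being rational in $z$, the equality persists after the substitution $z=1-e^t$, so this poses no real obstruction.
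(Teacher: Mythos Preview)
Your proof is correct and follows essentially the same route as the paper's: both arguments hinge on the generating-function identity
\[
\sum_{k_1,\ldots,k_r\ge0}\Li_{\km}(1-e^t)\,\frac{x_1^{k_1}\cdots x_r^{k_r}}{k_1!\cdots k_r!}
=\prod_{j=1}^r\frac{e^{y_j}(1-e^t)}{1-e^{y_j}(1-e^t)}
\]
(which the paper records as \eqref{4-10} and proves later in Lemma~\ref{L-5-5}, while you derive it directly), followed by the binomial expansion of $(1-e^t)^{r-1}$, the geometric-series expansion of each factor, and termwise application of the Euler integral for $N^{-s}$. Your simplified form $1/(e^t-1+e^{-y_j})$ of the factors is the same as the paper's $e^{-t}/\bigl(1-e^{-t}(1-e^{-y_j})\bigr)$, and your added remarks on analytic continuation and on the finiteness of the $(l_1,\ldots,l_r,a)$-contributions to a fixed coefficient make explicit a justification the paper leaves implicit.
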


\begin{proof}
We first prepare the following relation which will be proved in the next section (see Lemma \ref{L-5-5}):
\begin{equation}
\prod_{j=1}^r \frac{e^{\sum_{\nu=j}^{r}x_\nu}(1-e^t)}{1-e^{\sum_{\nu=j}^{r}x_\nu}(1-e^t)}=\sum_{\kk \geq 0}\Li_{\km}(1-e^t)\frac{x_1^{k_1}\cdots x_r^{k_r}}{k_1!\cdots k_r!} \label{4-10}
\end{equation}
holds around the origin. Let
$$\mathcal{F}(x_1,\ldots,x_r;s)=\sum_{\kk \geq 0}\eta(\km;s)\frac{x_1^{k_1}\cdots x_r^{k_r}}{k_1!\cdots k_r!}.$$
As a generalization of \cite[Proposition 5]{IKT2014}, we have from \eqref{4-10} that
\begin{align*}
 \mathcal{F}(x_1,\ldots,x_r;s)& =\frac{1}{\Gamma(s)}\int_0^\infty \frac{t^{s-1}}{1-e^t}\prod_{j=1}^r \frac{e^{\sum_{\nu=j}^{r}x_\nu}(1-e^t)}{1-e^{\sum_{\nu=j}^{r}x_\nu}(1-e^t)}dt\\
& =\frac{1}{\Gamma(s)}\int_0^\infty {t^{s-1}}{(1-e^t)^{r-1}e^{-rt}}\prod_{j=1}^r \frac{1}{1-e^{-t}
\left(1-e^{-\sum_{\nu=j}^{r}x_\nu}\right)}dt\\
& =\frac{1}{\Gamma(s)}\sum_{a=0}^{r-1}(-1)^a\binom{r-1}{a}\sum_{m_1,\ldots,m_r\geq 0}\prod_{j=1}^r \left(1-e^{-\sum_{\nu=j}^r x_\nu}\right)^{m_j}\\
& \qquad \times \int_0^\infty {t^{s-1}}{e^{(a-r)t}}\prod_{j=1}^r e^{-m_jt}\ dt\\& =\sum_{a=0}^{r-1}
(-1)^a\binom{r-1}{a}\sum_{m_1,\ldots,m_r\geq 0}\frac{\prod_{j=1}^r \left(1-e^{-\sum_{\nu=j}^r x_\nu}\right)^{m_j}}{(m_1+\cdots+m_r+r-a)^s}.
\end{align*}
Therefore, by \eqref{4-7}, we obtain \eqref{4-8}. Further, setting $s=-m$ in \eqref{4-8} and using \eqref{4-6}, we obtain \eqref{4-9}.
\end{proof}

\begin{remark}
In the case $r=1$, \eqref{4-8} implies $\eta_{-k}(s)=\bb_{k}^{(s)}$. Thus, using Theorem \ref{Th-Main-2}, we obtain the duality formula \eqref{1-4}, which is also written as 
\begin{equation}
\eta_{-k}(-m)=\eta_{-m}(-k) 
\end{equation}
for $k,m\in \mathbb{Z}_{\geq 0}$. 
This is exactly contrasted with the positive index case \eqref{etadual}. 
Furthermore, by the same method, we can show that $\widetilde{\xi}_{-k-1}(-m)=C_{k}^{(-m-1)}$ for $k,m \in \mathbb{Z}_{\geq 0}$. Hence, using Theorem \ref{Th-Main-2-2} in the case $r=1$, we obtain the duality formula \eqref{1-5}.
\end{remark}

\begin{example}
When $r=2$, we can calculate directly from \eqref{4-7} that 
$\ba_{1,0}^{(s)}=3^{-s}-2^{-s}.$ 
On the other hand, as mentioned in Lemma \ref{L-4-1}, we have
$\Li_{-1,0}(z)=z^2/(1-z)^3$. Hence the left-hand side of \eqref{1-6} equals 
$$\frac{{\rm Li}_{-1,0}(1-e^{-t})}{1-e^{-t}}=\frac{1-e^{-t}}{e^{-3t}}=e^{3t}-e^{2t},$$
hence $\bb_m^{(-1,0)}=3^m-2^m$. Thus we can verify $\bb_m^{(-1,0)}=\ba_{1,0}^{(-m)}$.
\end{example}

\ 

\section{Multi-indexed poly-Bernoulli numbers and duality formulas}\label{sec-5}

In this section, we define multi-indexed poly-Bernoulli numbers (see Definition \ref{Def-5-1}) 
and prove the duality formula for them, namely a multi-indexed version of \eqref{1-4} (see Theorem \ref{T-5-8}). 

For this aim, we first recall multiple polylogarithms of $\ast$-type and of $\sh$-type in several variables defined by
\begin{align}
\Li_{s_1,\ldots,s_r}^{\ast}(z_1,\ldots,z_r)&=\sum_{1\leq m_1<\cdots<m_r} \frac{z_1^{m_1}\cdots z_r^{m_r}}{m_1^{s_1}m_2^{s_2}\cdots m_r^{s_r}}, \label{5-0}\\
\Li_{s_1,\ldots,s_r}^{\sh}(z_1,\ldots,z_r)&=\sum_{1\leq m_1<\cdots<m_r} \frac{z_1^{m_1}z_2^{m_2-m_1}
\cdots z_r^{m_r-m_{r-1}}}{m_1^{s_1}m_2^{s_2}\cdots m_r^{s_r}}\label{5-1-2}\\
& =\sum_{l_1,\ldots,l_r=1}^\infty \frac{z_1^{l_1}z_2^{l_2}\cdots z_r^{l_r}}{l_1^{s_1}(l_1+l_2)^{s_2}\cdots (l_1+\cdots+l_r)^{s_r}} \notag
\end{align}
for $s_1,\ldots,s_r\in \mathbb{C}$ and $z_1,\cdots,z_r\in \mathbb{C}$ with $|z_j|\leq 1$ $(1\leq j\leq r)$ (see, for example, \cite{Gon}). 
The symbols $\ast$ and $\sh$ are derived from the harmonic product and the shuffle product in the theory of multiple zeta values. 
In fact, Arakawa and the first-named author defined the two types of multiple $L$-values 
$L^{\ast}(k_1,\ldots,k_r;f_1,\ldots,f_r)$ of $\ast$-type and $L^{\sh}(k_1,\ldots,k_r;f_1,\ldots,f_r)$ of 
$\sh$-type associated to periodic functions $\{f_j\}$ (see \cite{AK2004}), 
defined by replacing $\{z_j^{m}\}$ by $\{f_j(m)\}$ and setting $(s_j)=(k_j)\in \mathbb{Z}_{\geq 1}^r$ on the right-hand sides 
of \eqref{5-0} and \eqref{5-1-2} for $(\kk)\in \mathbb{Z}_{\geq 1}^r$. Note that
\begin{equation}
\Li_{s_1,\ldots,s_r}^{\ast}(z_1,\ldots,z_r)=\Li_{s_1,\ldots,s_r}^{\sh}(\prod_{j=1}^{r}z_j,\prod_{j=2}^{r}z_j,\ldots,z_{r-1}z_{r},z_r). \label{5-1-3}
\end{equation}

\begin{definition}[Multi-indexed poly-Bernoulli numbers]\label{Def-5-1} \ 
For $s_1,\ldots,s_r\in \mathbb{C}$ and $\aa \in  \{1,2,\ldots,r\}$, the multi-indexed poly-Bernoulli numbers 
$\{ \bm_{\mm}^{(s_1,s_2,\ldots,s_r),(\aa)} \}$ are defined by 
\begin{align}
F(x_1,\ldots,x_r;s_1,\ldots,s_r;\aa)&=\frac{\Li_{s_1,\cdots,s_r}^{\sh}\left(1-e^{-\sum_{\nu=1}^{r}x_\nu},
\ldots,1-e^{-x_{r-1}-x_r}, 1-e^{-x_r}\right)}{\prod_{j=1}^{\aa}\left(1-e^{-\sum_{\nu=j}^rx_\nu}\right)}\label{5-2}\\
& \left(=\sum_{l_1,\ldots,l_r=1}^\infty \frac{\prod_{j=1}^{r}
\left(1-e^{-\sum_{\nu=j}^rx_\nu}\right)^{l_j-\delta_j(\aa)}}{\prod_{j=1}^{r}
\left(\sum_{\nu=1}^{j}l_\nu\right)^{s_j}}\right) \notag\\
& =\sum_{m_1,\ldots,m_r=0}^\infty \bm_{m_1,\ldots,m_r}^{(s_1,\ldots,s_r),(\aa)} 
\frac{x_1^{m_1}\cdots x_r^{m_r}}{m_1!\cdots m_r!},  \notag
\end{align}
where $\delta_j(\aa)=1\ (j\leq \aa)$, $=0\ (j> \aa)$.
\end{definition}

\begin{remark}\label{Rem-5-1}
Note that $\Li_{k_1,\ldots,k_r}^{\sh}(z,\ldots,z)=\Li_{k_1,\ldots,k_r}(z)$ defined by \eqref{1-7}. 
Suppose $x_1=\cdots=x_{r-1}=0$ and $(s_j)=(k_j)\in \mathbb{Z}^r$ in \eqref{5-2}. 
We immediately see that if $d=1$ then 
$$\bm_{0,\ldots,0,m}^{(k_1,\ldots,k_r),(1)}={B}_{m}^{(k_1,\ldots,k_r)}\qquad (m\in \mathbb{Z}_{\geq 0})$$
(see \eqref{1-4}), and if $\aa=r$ then 
$$\bm_{0,\ldots,0,m}^{(k_1,\ldots,k_r),(r)}=\mathbb{B}_{m}^{(k_1,\ldots,k_r)}\qquad (m\in \mathbb{Z}_{\geq 0})$$
(see \eqref{1-5-2}).
\end{remark}

\begin{remark}\label{Rem-5-2}
Let 
\begin{equation}
\Lambda_r=\{(x_1,\ldots,x_r)\in \mathbb{C}^{r}\mid |1-e^{-\sum_{\nu=j}^{r}x_\nu}|<1\ (1\leq j\leq r)\} . \label{region}
\end{equation}
Then we can see that 
$$\Li_{s_1,\cdots,s_r}^{\sh}\left(1-e^{-\sum_{\nu=1}^{r}x_\nu},\ldots,1-e^{-x_{r-1}-x_r}, 1-e^{-x_r}\right)
\quad (s_1,\ldots,s_r\in \mathbb{C})$$
is absolutely convergent for $(x_j)\in \Lambda_r$. Also $F(x_1,\ldots,x_r;s_1,\ldots,s_r;d)$ is absolutely convergent in the region $\Lambda_r\times \mathbb{C}^r$, 
so is holomorphic. 
Hence 
$\bm_{m_1,\ldots,m_r}^{(s_1,\ldots,s_r),(\aa)}$ is an entire function, 
because 
$$\bm_{m_1,\ldots,m_r}^{(s_1,\ldots,s_r),(\aa)} =\left(\frac{\partial}{\partial x_1}\right)^{m_1}
\cdots \left(\frac{\partial}{\partial x_r}\right)^{m_r}F(x_1,\ldots,x_r;s_1,\ldots,s_r;d)\bigg|_{(x_1,\ldots,x_r)=(0,\ldots,0)}$$
is holomorphic for all $(s_1,\ldots,s_r)\in \mathbb{C}^r$. 
\end{remark}

In the preceding section, we gave a certain duality formula for ${B}_{m}^{(k_1,\ldots,k_r)}$ (see Theorem \ref{Th-4-6}). 
By the similar method, we can prove certain duality formulas for $\bm_{m_1,\ldots,m_r}^{(k_1,\ldots,k_r),(\aa)} $, 
though they may be complicated. Hence, in the rest of this section, we will consider the case $\aa=r$. 
For emphasis, we denote $\bm_{m_1,\ldots,m_r}^{(s_1,\ldots,s_r), (r)}$ by $\bbm_{m_1,\ldots,m_r}^{(s_1,\ldots,s_r)}$. 
Note that $\delta_j(r)=1$ for any $j$. With this notation, we prove the following duality formulas.

\begin{theorem}\label{T-5-8}\ 
For $m_1,\ldots,m_r,\,\kk\in \mathbb{Z}_{\geq 0}$,
\begin{equation}
\bbm_{\mm}^{(\km)}=\bbm_{\kk}^{(-m_1,\ldots,-m_r)}. \label{5-18}
\end{equation}
\end{theorem}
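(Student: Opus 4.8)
The plan is to pack the entire doubly-indexed family $\{\bbm_{\mm}^{(\km)}\}$ into one generating function in two sets of variables $x_1,\ldots,x_r$ and $y_1,\ldots,y_r$, and to exhibit a closed form for it that is manifestly invariant under the simultaneous interchange $x_i\leftrightarrow y_i$. Writing $X_j=x_j+\cdots+x_r$ and $Y_j=y_j+\cdots+y_r$, I would set
\[ G(x_1,\ldots,x_r;y_1,\ldots,y_r)=\sum_{m_1,\ldots,m_r\ge0}\ \sum_{k_1,\ldots,k_r\ge0}\bbm_{\mm}^{(\km)}\,\frac{x_1^{m_1}\cdots x_r^{m_r}}{m_1!\cdots m_r!}\,\frac{y_1^{k_1}\cdots y_r^{k_r}}{k_1!\cdots k_r!}, \]
and try to prove
\[ G(x_1,\ldots,x_r;y_1,\ldots,y_r)=\prod_{j=1}^r\frac1{e^{-X_j}+e^{-Y_j}-1}. \]
Since this product is symmetric under $X_j\leftrightarrow Y_j$ (equivalently $x_i\leftrightarrow y_i$), the identity $G(x;y)=G(y;x)$ follows at once, and comparing the coefficient of $\prod_i\frac{x_i^{m_i}}{m_i!}\frac{y_i^{k_i}}{k_i!}$ on the two sides gives precisely \eqref{5-18}. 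As a sanity check, for $r=1$ this reduces to the well-known $1/(e^{-x}+e^{-y}-1)$.

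To establish the closed form, I would first recognize the inner sum over the $m_j$ as $F(x_1,\ldots,x_r;-k_1,\ldots,-k_r;r)$ from Definition~\ref{Def-5-1}; since $\aa=r$ makes every $\delta_j(r)=1$, the middle expression of \eqref{5-2} specializes at $s_j=-k_j$ to
\[ F(x_1,\ldots,x_r;-k_1,\ldots,-k_r;r)=\sum_{l_1,\ldots,l_r\ge1}\prod_{j=1}^r\bigl(1-e^{-X_j}\bigr)^{l_j-1}\prod_{j=1}^r\Bigl(\sum_{\nu=1}^j l_\nu\Bigr)^{k_j}. \]
Substituting this into $G$ and interchanging the order of summation, the sum over each $k_j$ collapses to an exponential, $\sum_{k_j\ge0}(L_jy_j)^{k_j}/k_j!=e^{L_jy_j}$ with $L_j:=l_1+\cdots+l_j$. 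The key bookkeeping step is the re-indexing $\sum_{j=1}^rL_jy_j=\sum_{i=1}^rl_iY_i$, which decouples the $l$-variables and lets the whole expression factor as $\prod_{j=1}^r\sum_{l_j\ge1}(1-e^{-X_j})^{l_j-1}e^{l_jY_j}$. Each factor is a geometric series summing to $e^{Y_j}/\bigl(1-(1-e^{-X_j})e^{Y_j}\bigr)=1/(e^{-X_j}+e^{-Y_j}-1)$, which is exactly the claimed product.

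The substantive point to verify is the legitimacy of the rearrangements: the interchange of the $m$-, $k$- and $l$-summations together with the convergence of the geometric series. Near the origin one has $1-e^{-X_j}\to0$ and $e^{Y_j}\to1$, so $\bigl|(1-e^{-X_j})e^{Y_j}\bigr|<1$ on a neighborhood, and the polynomial weights $L_j^{k_j}$ are absorbed by the geometric decay $(1-e^{-X_j})^{l_j}$; thus all series converge absolutely there (compare Remark~\ref{Rem-5-2}) and Fubini justifies the interchanges. Everything else is routine manipulation of the partial sums $X_j,\,Y_j,\,L_j$, so I expect the only real care to be taken in setting up this convergence on a common polydisc about the origin.
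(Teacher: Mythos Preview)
Your argument is correct and gives a clean, self-contained proof of the duality.  It is, however, a genuinely different route from the one taken in the paper.  The paper does not form the double exponential generating function $G(x;y)$ at all; instead it introduces the multi-variable $\eta$-function $\eta(\km;s_1,\ldots,s_r)$ via an $r$-fold integral, analytically continues it by a contour-integral argument (Lemma~\ref{L-5-4}, Theorem~\ref{Th-Main-3}), and proves the stronger identity $\eta(\km;s_1,\ldots,s_r)=\bbm_{\kk}^{(s_1,\ldots,s_r)}$ for all $(s_j)\in\mathbb{C}^r$ (Theorem~\ref{T-5-9}); specialising $s_j=-m_j$ then gives~\eqref{5-18}.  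Your approach is the natural $r$-variable extension of the classical generating-function proof of $B_n^{(-k)}=B_k^{(-n)}$: it is more elementary, needs no contour integrals or analytic continuation, and produces the pleasant closed form $\prod_j(e^{-X_j}+e^{-Y_j}-1)^{-1}$.  What you give up is precisely Theorem~\ref{T-5-9}: the paper's detour through $\eta$ yields an identity valid for arbitrary complex $s_j$, whereas your method only sees the integer lattice.  Incidentally, your key computation is essentially a discrete (generating-function) version of the paper's Lemma~\ref{L-5-3}, with the Mellin integral $\frac1{\Gamma(s_j)}\int_0^\infty t_j^{s_j-1}e^{-L_jt_j}\,dt_j=L_j^{-s_j}$ replaced by the exponential sum $\sum_{k_j}(L_jy_j)^{k_j}/k_j!=e^{L_jy_j}$, and the re-indexing $\sum_jL_jy_j=\sum_il_iY_i$ plays the role that the variable regrouping plays in the proof of that lemma.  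The convergence check you outline is adequate; the cleanest way to phrase it is that for $(x_j),(y_j)$ in a small polydisc one has $|1-e^{-X_j}|\,e^{\widetilde Y_j}<1$ with $\widetilde Y_j=\sum_{\nu\ge j}|y_\nu|$, so the triple sum over $m$, $k$, $l$ is absolutely convergent and Fubini applies.
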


Now we aim to prove this theorem. First we generalize Lemma \ref{L-4-1} as follows. 

\begin{lemma}\label{L-5-4}
For $\kk\in \mathbb{Z}_{\geq 0}$,  there exists a polynomial $\ppp(x_1,\ldots,x_r;\kk)\in \mathbb{Z}[x_1,\ldots,x_r]$ 
such that
\begin{align}
& \Li_{\km}^{\ast}(z_1,\ldots,z_r)=\frac{\ppp(\prod_{j=1}^{r}z_j,\prod_{j=2}^{r}z_j,
\ldots,z_{r-1}z_r,z_r;\kk)}{\prod_{j=1}^{r}\left(1-\prod_{\nu=j}^{r}z_\nu \right)^{\sum_{\nu=j}^{r}k_\nu+1}}, \label{5-4}\\
& {\rm deg}_{x_j}\ppp(x_1,\ldots,x_r;\kk)\leq \sum_{\nu=j}^{r}k_\nu+1, \label{5-5}\\
& (x_1\cdots x_r)\mid \ppp(x_1,\ldots,x_r;\kk).\label{5-6}
\end{align}
Set $y_j=\prod_{\nu=j}^{r}z_\nu$ $(1\leq j\leq r)$. Then \eqref{5-4} implies
\begin{align}
& \Li_{\km}^{\sh}(y_1,\ldots,y_r)=\frac{\ppp(y_1,\cdots,y_r;\kk)}{\prod_{j=1}^{r}
\left(1-y_j\right)^{\sum_{\nu=j}^{r}k_\nu+1}}. \label{5-7}
\end{align}
\end{lemma}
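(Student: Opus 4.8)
The plan is to prove the $\sh$-form \eqref{5-7} directly and to deduce \eqref{5-4} for free. Indeed, \eqref{5-1-3} gives $\Li_{\km}^{\ast}(z_1,\ldots,z_r)=\Li_{\km}^{\sh}(y_1,\ldots,y_r)$ under $y_j=\prod_{\nu=j}^{r}z_\nu$, so \eqref{5-4} and \eqref{5-7} are literally the same statement, and the degree bound \eqref{5-5} and the divisibility \eqref{5-6} are assertions about the polynomial $\ppp$ in its own variables $x_j=y_j$. So I would work throughout with $G_{\bf k}(y_1,\ldots,y_r):=\Li_{\km}^{\sh}(y_1,\ldots,y_r)$ on its domain of absolute convergence, writing $\sigma_j:=\sum_{\nu=j}^{r}k_\nu$ (so that the target denominator is $\prod_{j=1}^{r}(1-y_j)^{\sigma_j+1}$), and carry out a double induction on the depth $r\ge1$ and the weight $K=k_1+\cdots+k_r\ge0$, exactly paralleling the proof of Lemma \ref{L-4-1}. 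The base case $r=1$ is Lemma \ref{L-4-1} itself, with $\ppp(x_1;k_1):=P(x_1;k_1)$.

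The engine is a pair of recursions for $G_{\bf k}$, each read off directly from the series \eqref{5-1-2}. When $k_r=0$, summing the geometric series in $l_r$ gives the depth-reduction
\[ G_{(k_1,\ldots,k_{r-1},0)}(y_1,\ldots,y_r)=\frac{y_r}{1-y_r}\,G_{(k_1,\ldots,k_{r-1})}(y_1,\ldots,y_{r-1}). \]
When $k_r\ge1$, the Euler operator $D:=\sum_{j=1}^{r}y_j\,\partial_{y_j}$ multiplies the $(l_1,\ldots,l_r)$-term by $l_1+\cdots+l_r$, hence raises the last exponent from $k_r-1$ to $k_r$, yielding the weight-reduction
\[ G_{(\kk)}(y_1,\ldots,y_r)=D\,G_{(k_1,\ldots,k_{r-1},k_r-1)}(y_1,\ldots,y_r),\qquad D:=\sum_{j=1}^{r}y_j\,\partial_{y_j}. \]

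I would then propagate the three asserted properties through each recursion. For the depth-reduction I simply set $\ppp(x_1,\ldots,x_r;k_1,\ldots,k_{r-1},0):=x_r\,\ppp(x_1,\ldots,x_{r-1};k_1,\ldots,k_{r-1})$: since $\sigma_j$ is unchanged for $j\le r-1$ and $\sigma_r=0$, the denominator acquires exactly the factor $(1-y_r)^{1}$, the new degree in $x_r$ is $1=\sigma_r+1$, the other degree bounds are inherited, and the extra factor $x_r$ restores $x_1\cdots x_r\mid\ppp$. For the weight-reduction, note that every $\sigma_j$ drops by exactly $1$ when $k_r$ drops by $1$, so by induction $G_{(k_1,\ldots,k_{r-1},k_r-1)}=\ppp_-/\prod_{j}(1-y_j)^{\sigma_j}$ with $\ppp_-:=\ppp(y_1,\ldots,y_r;k_1,\ldots,k_{r-1},k_r-1)$. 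Applying $D$ and clearing denominators up to $\prod_{j}(1-y_j)^{\sigma_j+1}$ produces
\[ \ppp(y_1,\ldots,y_r;\kk):=(D\ppp_-)\prod_{j=1}^{r}(1-y_j)+\sum_{i=1}^{r}\sigma_i\,y_i\,\ppp_-\prod_{j\ne i}(1-y_j), \]
which is manifestly in $\mathbb{Z}[y_1,\ldots,y_r]$.

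The step I expect to require the most care is checking that this last expression still obeys \eqref{5-5} and \eqref{5-6}. For the degree bound I would use that $D$ acts diagonally on monomials and so never raises $\deg_{y_l}$, while multiplication by $\prod_{j}(1-y_j)$ (resp. by $y_i\prod_{j\ne i}(1-y_j)$) raises $\deg_{y_l}$ by at most $1$; together with the inductive bound $\deg_{y_l}\ppp_-\le\sigma_l$ this gives $\deg_{y_l}\ppp\le\sigma_l+1$ term by term. For divisibility I would note that $D$ preserves the ideal $(y_1\cdots y_r)$, since $y_i\,\partial_{y_i}(y_1\cdots y_r\,Q)=y_1\cdots y_r\,(Q+y_i\,\partial_{y_i}Q)$, so both summands stay divisible by $y_1\cdots y_r$. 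The only genuinely delicate bookkeeping is keeping the exponents $\sigma_j$ aligned across the two recursions and confirming that the degree bound stays at $\sigma_l+1$ rather than drifting upward; once that is pinned down the double induction closes, and \eqref{5-4} follows from \eqref{5-7} via \eqref{5-1-3}.
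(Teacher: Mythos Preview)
Your argument is correct, and in fact a bit cleaner than the paper's. Both proofs share the same induction scheme and the same depth-reduction in the $k_r=0$ case (setting $\ppp(x_1,\ldots,x_r;k_1,\ldots,k_{r-1},0)=x_r\,\ppp(x_1,\ldots,x_{r-1};k_1,\ldots,k_{r-1})$). The genuine difference is the $k_r\ge1$ step. The paper works with the $\ast$-form and applies the single-variable operator $z_r\,\partial_{z_r}$; under the change of variables $y_j=\prod_{\nu=j}^{r}z_\nu$ this is exactly your Euler operator $D=\sum_j y_j\,\partial_{y_j}$, so the starting point coincides, but the paper then expands the result via auxiliary integer coefficients $c_{j,\nu}^{(k_r-1)}$ (encoding $\frac{d}{dz}\sum_{m>l}m^{k_r-1}z^m$) and pushes everything down to depth $r-1$, obtaining
\[
\ppp(x_1,\ldots,x_r;\kk)=\sum_{j=0}^{k_r}\sum_{\nu=0}^{k_r}c_{j,\nu}^{(k_r-1)}\,x_r^{j+1}(1-x_{r-1})^{k_r-\nu}\,\ppp(x_1,\ldots,x_{r-1};k_1,\ldots,k_{r-2},k_{r-1}+\nu).
\]
Your route instead stays at depth $r$ and simply writes
\[
\ppp(y;\kk)=(D\ppp_-)\prod_{j=1}^{r}(1-y_j)+\sum_{i=1}^{r}\sigma_i\,y_i\,\ppp_-\prod_{j\ne i}(1-y_j),
\]
from which integrality, the degree bound \eqref{5-5}, and the divisibility \eqref{5-6} are read off directly (using that $D$ preserves monomial degrees and the ideal $(y_1\cdots y_r)$). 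The payoff of your approach is that it avoids the combinatorial coefficients $c_{j,\nu}^{(k)}$ and makes all three verifications transparent; the paper's approach has the minor advantage of giving a recursion that feeds into depth $r-1$ only, at the cost of a more intricate formula.
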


\begin{proof}
In order to prove this lemma, we have only to use the same method as in Lemma \ref{L-4-1} by induction on $r$. 
Since the case of $r=1$ is proven, we consider the case of $r\geq 2$. Further, when $K=k_1+\cdots+k_r=0$, 
it is easy to have the assertion. Hence we think about a general case $K=k_1+\cdots+k_r(\geq 1)$. 
When $k_r=0$, we have
\begin{align*}
\Li_{\km}^{\ast}(z_1,\ldots,z_r)&=\frac{z_r}{1-z_r} \Li_{-k_1,\ldots,-k_{r-1}}^{\ast}(z_1,\ldots,z_{r-2},z_{r-1}z_r)\\
& =\frac{z_r}{1-z_r}\frac{\ppp(\prod_{j=1}^{r}z_j,\ldots,z_{r-1}z_r;k_1,
\ldots,k_{r-1})}{\prod_{j=1}^{r-1}\left(1-\prod_{\nu=j}^{r}z_j\right)^{\sum_{\nu=j}^{r}k_\nu+1}}.
\end{align*}
Therefore, setting $\ppp(x_1,\cdots,x_r;k_1,\ldots,k_{r-1},0)=x_r\ppp(x_1,\ldots,x_{r-1};k_1,\ldots,k_{r-1})$, 
we can verify \eqref{5-4}--\eqref{5-6}.

Next we consider the case $k_r\geq 1$. For $k\in \mathbb{Z}_{\geq 0}$, we inductively define a subset 
$\{c_{j,\nu}^{(k)}\}_{0\leq j,\nu\leq k+1}$
of $\mathbb{Z}$ by
\begin{equation}
\frac{d}{dz}\left(\sum_{m>l}m^{k}z^m\right)=\frac{1}{(1-z)^{k+2}}\sum_{j=0}^{k+1}
\sum_{\nu=0}^{k+1}c_{j,\nu}^{(k)}l^\nu z^{l+j}. \label{5-8}
\end{equation}
In fact, by
$$\frac{d}{dz}\left(\sum_{m>l}z^m\right)=\frac{1}{(1-z)^{2}}\left(z^l+l z^l-l z^{l+1}\right),$$
and 
$$\sum_{m>l}m^{k}z^m=z\frac{d}{dz}\left(\sum_{m>l}m^{k-1}z^m\right)\qquad (k\geq 1),$$
we can determine $\{c_{j,\nu}^{(k)}\}$ by \eqref{5-8}. Using this notation, we have
\begin{align*}
& \Li_{\km}^{\ast}(z_1,\ldots,z_r)=z_r\frac{d}{dz_r}\Li_{-k_1,\ldots,-k_r+1}^{\ast}(z_1,\ldots,z_r)\\
& =z_r \sum_{m_1<\cdots<m_{r-1}}m_1^{k_1}\cdots m_{r-1}^{k_{r-1}}z_1^{m_1}\cdots z_{r-1}^{m_{r-1}}\frac{\sum_{j=0}^{k_r}\sum_{\nu=0}^{k_r}c_{j,\nu}^{(k_r-1)}m_{r-1}^\nu z_r^{m_{r-1}+j}}{(1-z_r)^{k_r+1}}\\
& =\frac{1}{(1-z_r)^{k_r+1}} \sum_{j=0}^{k_r}\sum_{\nu=0}^{k_r}c_{j,\nu}^{(k_r-1)}z_r^{j+1}\sum_{m_1<\cdots<m_{r-1}}m_1^{k_1}\cdots m_{r-1}^{k_{r-1}+\nu} z_1^{m_1}\cdots (z_{r-1}z_r)^{m_{r-1}}.
\end{align*}
By the induction hypothesis in the case $r-1$, this is equal to 
\begin{align*}
& \frac{1}{(1-z_r)^{k_r+1}} \sum_{j=0}^{k_r}\sum_{\nu=0}^{k_r}c_{j,\nu}^{(k_r-1)}z_r^{j+1}\frac{\ppp(\prod_{j=1}^{r-1}z_j,\ldots,z_{r-1}z_r;k_1,\ldots,k_{r-2},k_{r-1}+\nu)}{\prod_{j=1}^{r-2}
\left(1-\prod_{\nu=j}^{r}z_j\right)^{\sum_{\nu=j}^{r}k_\nu+1}(1-z_{r-1}z_{r})^{k_{r-1}+\nu+1}}.
\end{align*}
Therefore we set
\begin{align*}
&\ppp(x_1,\cdots,x_r;\kk)\\
&=\sum_{j=0}^{k_r}\sum_{\nu=0}^{k_r}c_{j,\nu}^{(k_r-1)}x_r^{j+1}(1-x_{r-1})^{k_r-\nu}\ppp(x_1,\cdots,x_{r-1};k_1,\ldots,k_{r-2},k_{r-1}+\nu).
\end{align*}
Then this satisfies \eqref{5-4}--\eqref{5-6}. This completes the proof.
\end{proof}

From this result, we can reach the following definition.

\begin{definition}\label{Def-Main-3} 
For $k_1,\ldots,k_r\in \mathbb{Z}_{\geq 0}$, define 
\begin{equation}
\begin{split}
& \eett(\km;s_1,\ldots,s_r) =\frac{1}{\prod_{j=1}^{r}\Gamma(s_j)}\int_{0}^\infty \cdots \int_{0}^\infty 
\prod_{j=1}^{r} t_j^{s_j-1}\\
& \qquad \times \frac{\Li_{\km}^\sh(1-e^{\sum_{\nu=1}^{r}t_\nu},
\ldots,1-e^{t_{r-1}+t_r},1-e^{t_r})}{\prod_{j=1}^{r}(1-e^{\sum_{\nu=j}^{r}t_\nu}) }\prod_{j=1}^{r}dt_j
\end{split}
\label{5-9}
\end{equation}
for $s_1,\ldots, s_r\in \mathbb{C}$ with ${\rm Re}(s_j)>0$ $(1\leq j\leq r)$. 
\end{definition}

Lemma \ref{L-5-4} ensures 
that the integral on the right-hand side of \eqref{5-9} is absolutely convergent for ${\rm Re}(s_j)>0$. 
By the same method as in the proof of Theorem \ref{Th-Main-1} for ${\eta}(\kk;s)$, we can similarly obtain the following.

\begin{theorem}\label{Th-Main-3} 
For $k_1,\ldots,k_r\in \mathbb{Z}_{\geq 0}$, 
$\eett(\km;s_1,\ldots,s_r)$ can be analytically continued to an entire function on the whole complex space, and satisfies 
\begin{equation}
\eett(\km;-m_1,\ldots,-m_r)=\bbm_{m_1,\ldots,m_r}^{(\km)}\quad (m_1,\ldots,m_r \in \mathbb{Z}_{\geq 0}). \label{5-10}
\end{equation}
\end{theorem}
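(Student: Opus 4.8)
The plan is to transcribe the single-variable contour argument of Theorem~\ref{Th-Main-1} to the $r$-fold setting, using Lemma~\ref{L-5-4} in place of Lemma~\ref{L-2-3} to control the integrand. Write
\[
 G(t_1,\ldots,t_r)=\frac{\Li_{\km}^\sh\bigl(1-e^{\sum_{\nu=1}^{r}t_\nu},\ldots,1-e^{t_r}\bigr)}{\prod_{j=1}^{r}\bigl(1-e^{\sum_{\nu=j}^{r}t_\nu}\bigr)}
\]
for the factor multiplying $\prod_j t_j^{s_j-1}$ in \eqref{5-9}, and set $y_j=1-e^{\sum_{\nu=j}^{r}t_\nu}$, so that $1-y_j=e^{\sum_{\nu=j}^{r}t_\nu}$. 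By \eqref{5-7} we have $G=\ppp(y_1,\ldots,y_r;\kk)/\bigl(\prod_j y_j\cdot\prod_j e^{(\sum_{\nu=j}^{r}t_\nu)(\sum_{\nu=j}^{r}k_\nu+1)}\bigr)$. First I would record the multivariable analogue of Lemma~\ref{L-2-3}: the divisibility \eqref{5-6} makes $\ppp/\prod_j y_j$ a genuine polynomial in $y_1,\ldots,y_r$, so, since each $y_j$ is entire in $t$ and the exponential factor is entire and zero-free, $G$ is in fact entire in $(t_1,\ldots,t_r)$, equals $O(1)$ as all $t_j\to0$, and—by the degree bound \eqref{5-5}—decays exponentially as any $t_j\to+\infty$ along the reals. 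Comparing with Definition~\ref{Def-5-1} (the case $d=r$, with $x_j\mapsto-t_j$) moreover identifies $G$ with the generating series $\sum_{\mathbf n\ge0}\bbm_{\mathbf n}^{(\km)}\prod_{j}(-t_j)^{n_j}/n_j!$, which will drive the interpolation.

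Next I would set up the $r$-fold Hankel integral. Let $\mathcal{C}_j$ be a copy of the contour $\mathcal{C}$ from Theorem~\ref{Th-Main-1} in the $t_j$-plane, and put
\[
 H(\km;s_1,\ldots,s_r)=\int_{\mathcal{C}_1}\!\!\cdots\!\int_{\mathcal{C}_r}\prod_{j=1}^{r}t_j^{s_j-1}\,G(t_1,\ldots,t_r)\,dt_1\cdots dt_r.
\]
By the estimates just recorded the integrand is holomorphic near $\mathcal{C}_1\times\cdots\times\mathcal{C}_r$ and the integral converges absolutely and locally uniformly in $(s_1,\ldots,s_r)\in\mathbb{C}^r$, so $H$ is entire. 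Decomposing each one-variable contour as $\int_{\mathcal{C}_j}=(e^{2\pi is_j}-1)\int_\varepsilon^\infty+\int_{C_{\varepsilon,j}}$ and letting every $\varepsilon\to0$ in the region $\re(s_j)>0$, all terms containing a circle factor drop out, and comparison with \eqref{5-9} gives $H=\prod_{j}(e^{2\pi is_j}-1)\Gamma(s_j)\cdot\eett(\km;s_1,\ldots,s_r)$. Hence
\[
 \eett(\km;s_1,\ldots,s_r)=\frac{H(\km;s_1,\ldots,s_r)}{\prod_{j=1}^{r}(e^{2\pi is_j}-1)\Gamma(s_j)},
\]
which exhibits a meromorphic continuation to all of $\mathbb{C}^r$ whose only possible polar divisors are the hyperplanes $\{s_j\in\mathbb{Z}_{\ge1}\}$ (on $\{s_j\in\mathbb{Z}_{\le0}\}$ the zero of $1/\Gamma(s_j)$ cancels the zero of $e^{2\pi is_j}-1$). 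Since $\eett$ is already holomorphic on $\{\re(s_j)>0\ (\forall j)\}$ by Definition~\ref{Def-Main-3}, and each candidate hyperplane meets this region, none of them can lie in the polar set; as the polar set of a meromorphic function on $\mathbb{C}^r$ is of pure codimension one, it must be empty, and $\eett$ is entire.

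Finally I would evaluate at $s_j=-m_j$. Because $H$ is entire and the prefactor splits,
\[
 \eett(\km;-m_1,\ldots,-m_r)=\prod_{j=1}^{r}\Bigl(\lim_{s_j\to-m_j}\tfrac1{(e^{2\pi is_j}-1)\Gamma(s_j)}\Bigr)H(\km;-m_1,\ldots,-m_r)=\prod_{j=1}^{r}\frac{(-1)^{m_j}m_j!}{2\pi i}\,H(\km;-m_1,\ldots,-m_r).
\]
At $s=(-m_1,\ldots,-m_r)$ every factor $e^{2\pi is_j}-1$ vanishes, so in the contour decomposition of $H$ only the all-circles term survives, and on the product of small circles the power series for $G$ converges uniformly, allowing termwise integration with $\oint_{C_{\varepsilon,j}}t_j^{\,n_j-m_j-1}\,dt_j=2\pi i\,\delta_{n_j,m_j}$. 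This selects $\mathbf n=\mathbf m$ and yields $H(\km;-m_1,\ldots,-m_r)=\bbm_{\mm}^{(\km)}\prod_j\bigl((-1)^{m_j}(2\pi i)/m_j!\bigr)$, whence \eqref{5-10}. The main obstacle is the first paragraph: converting the rational presentation of Lemma~\ref{L-5-4} into the joint holomorphy and uniform exponential decay of $G$ along the product contour, since these are exactly the properties that make $H$ entire and legitimize every contour deformation; the several-variable pole removal in the third paragraph is the only other point requiring care beyond the one-dimensional case.
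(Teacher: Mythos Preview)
Your proof is correct and follows essentially the same strategy as the paper's: introduce the $r$-fold Hankel integral $H(\km;s_1,\ldots,s_r)$ over $\mathcal{C}^r$, use Lemma~\ref{L-5-4} to see that the integrand is regular along the contour with exponential decay, deduce $\eett=H/\prod_j(e^{2\pi is_j}-1)\Gamma(s_j)$, and read off the values at $(-m_1,\ldots,-m_r)$ from the Taylor expansion \eqref{5-2} via the all-circles residue. Your write-up is in fact more explicit than the paper's at two points it leaves terse---the verification (via \eqref{5-5}--\eqref{5-6}) that $G$ is entire with product-exponential decay, and the several-variable pole-removal argument showing the continuation is genuinely entire rather than merely meromorphic---but these are elaborations of the same method, not a different one.
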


\begin{proof}
As in the proof of Theorem \ref{Th-Main-1}, let
\begin{align}
&H(\km;s_1,\ldots,s_r) \label{5-11}\\
&=\int_{\mathcal{C}^r}\prod_{j=1}^{r} t_j^{s_j-1}\frac{\Li_{\km}^\sh (1-e^{\sum_{\nu=1}^{r}t_\nu},
\ldots, 1-e^{t_r})}{\prod_{j=1}^{r}(1-e^{\sum_{\nu=j}^{r}t_\nu}) }\prod_{j=1}^{r}dt_j\notag\\
& =\prod_{j=1}^{r}(e^{2\pi i s_j}-1)\int_{\varepsilon}^\infty\cdots \int_{\varepsilon}^\infty \prod_{j=1}^{r}t_j^{s_j-1}\frac{\Li_{\km}^\sh (1-e^{\sum_{\nu=1}^{r}t_\nu},\ldots, 1-e^{t_r})}{\prod_{j=1}^{r}(1-e^{\sum_{\nu=j}^{r}t_\nu}) }\prod_{j=1}^{r}dt_j\notag\\
& \qquad \cdots +\int_{C_\varepsilon^r}\prod_{j=1}^{r}t_j^{s_j-1} \frac{\Li_{\km}^\sh (1-e^{\sum_{\nu=1}^{r}t_\nu},\ldots, 1-e^{t_r})}{\prod_{j=1}^{r}(1-e^{\sum_{\nu=j}^{r}t_\nu}) }\prod_{j=1}^{r}dt_j,\notag
\end{align}
where $\mathcal{C}^r$ is the direct product of the contour $\mathcal{C}$ defined before. 
Note that the integrand on the second member has no singularity on $\mathcal{C}^r$. 
It follows from Lemma \ref{L-5-4} that $H(\km;s_1,\ldots,s_r)$ is absolutely convergent for any 
$(s_j)\in \mathbb{C}^r$, namely is entire. 
Suppose ${\rm Re}(s_j)>0$ for each $j$, all terms except for the first term on the third member of \eqref{5-11} tends to $0$ as $\varepsilon \to 0$. Hence
$$\eett(\km;s_1,\ldots,s_r)=\frac{1}{\prod_{j=1}^{r}(e^{2\pi i s_j}-1)\Gamma(s_j)}H(\km;s_1,\ldots,s_r),$$
which can be analytically continued to $\mathbb{C}^r$. 
Also, setting $(s_1,\ldots,s_r)=(-m_1,\ldots,-m_r) \in \mathbb{Z}_{\leq 0}^r$ in \eqref{5-11}, 
we obtain  \eqref{5-10} from \eqref{5-2}. 
This completes the proof.
\end{proof}

Next we directly construct the generating function of $\eett(\km;s_1,\ldots,s_r)$. We prepare the following 
two lemmas which we consider when $(x_j)$ is in $\Lambda_r$ defined by \eqref{region}.

\begin{lemma}\label{L-5-3}\ \ For $(s_j)\in \mathbb{C}^r$ with ${\rm Re}(s_j)>0$ $(1\leq j\leq r)$, 
\begin{align}
& F(x_1,\ldots,x_r;s_1,\ldots,s_r;r)\label{5-13}\\
& =\frac{1}{\prod_{j=1}^{r}\Gamma(s_j)}\int_0^\infty\cdots\int_0^\infty \prod_{j=1}^{r}
\left\{t_j^{s_j-1}\frac{e^{\sum_{\nu=j}^{r}x_\nu}}{1-e^{\sum_{\nu=j}^{r}x_\nu}
(1-e^{\sum_{\nu=j}^{r}t_\nu})}\right\}\prod_{j=1}^{r}dt_j.\notag
\end{align}
\end{lemma}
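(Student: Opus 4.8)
The plan is to start from the series form of $F(x_1,\ldots,x_r;s_1,\ldots,s_r;r)$ recorded in Definition~\ref{Def-5-1}, namely
\[ F(x_1,\ldots,x_r;s_1,\ldots,s_r;r)=\sum_{l_1,\ldots,l_r=1}^\infty \frac{\prod_{j=1}^{r}\bigl(1-e^{-\sum_{\nu=j}^rx_\nu}\bigr)^{l_j-1}}{\prod_{j=1}^{r}\bigl(\sum_{\nu=1}^{j}l_\nu\bigr)^{s_j}}, \]
and to convert each denominator factor into a one-dimensional integral. First I would apply the standard identity $n^{-s}=\Gamma(s)^{-1}\int_0^\infty t^{s-1}e^{-nt}\,dt$ (valid for ${\rm Re}(s)>0$ and $n=\sum_{\nu=1}^j l_\nu\ge1$) to each factor $\bigl(\sum_{\nu=1}^{j}l_\nu\bigr)^{-s_j}$, introducing a variable $t_j$. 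Thus $\prod_{j}\bigl(\sum_{\nu=1}^j l_\nu\bigr)^{-s_j}$ becomes $\bigl(\prod_j\Gamma(s_j)\bigr)^{-1}$ times the $r$-fold integral of $\prod_j t_j^{s_j-1}\exp\bigl(-\sum_{j=1}^r t_j\sum_{\nu=1}^j l_\nu\bigr)$.

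The key algebraic step is to reorganize the exponent by interchanging the order of summation: writing $\sum_{j=1}^r t_j\sum_{\nu=1}^j l_\nu=\sum_{j=1}^r l_j\sum_{\nu=j}^r t_\nu$, the exponential factorizes as $\prod_{j=1}^r e^{-l_j\sum_{\nu=j}^r t_\nu}$. After interchanging the $(l_1,\ldots,l_r)$-summation with the $r$-fold integration, the summand factorizes completely over $j$, and the sum over each $l_j\ge1$ is a geometric series
\[ \sum_{l_j=1}^\infty \bigl(1-e^{-\sum_{\nu=j}^r x_\nu}\bigr)^{l_j-1}e^{-l_j\sum_{\nu=j}^r t_\nu}
=\frac{e^{-\sum_{\nu=j}^r t_\nu}}{1-\bigl(1-e^{-\sum_{\nu=j}^r x_\nu}\bigr)e^{-\sum_{\nu=j}^r t_\nu}}. \]
A short simplification, multiplying numerator and denominator by $e^{\sum_{\nu=j}^r(x_\nu+t_\nu)}$, rewrites the right-hand side as $e^{\sum_{\nu=j}^r x_\nu}/\bigl(1-e^{\sum_{\nu=j}^r x_\nu}(1-e^{\sum_{\nu=j}^r t_\nu})\bigr)$, which is precisely the $j$-th factor of the integrand in~\eqref{5-13}. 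Taking the product over $j$ yields the claimed formula.

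The main obstacle is the justification of the interchange of the infinite summation and the multiple integration; everything else is routine. To handle this I would pass to absolute values and apply Tonelli's theorem. The geometric series converges because $(x_1,\ldots,x_r)\in\Lambda_r$ forces $\bigl|1-e^{-\sum_{\nu=j}^r x_\nu}\bigr|<1$ (the defining condition~\eqref{region}), while $e^{-\sum_{\nu=j}^r t_\nu}\le1$ for $t_\nu>0$, so the common ratio has modulus strictly below $1$; the sum of absolute values then has the same closed form with $1-e^{-\sum_{\nu=j}^r x_\nu}$ replaced by its modulus. Each such factor is bounded by the constant $\bigl(1-|1-e^{-\sum_{\nu=j}^r x_\nu}|\bigr)^{-1}$, and, crucially, the factor with $j=1$ is dominated by a constant multiple of $e^{-(t_1+\cdots+t_r)}$ since its exponential argument $\sum_{\nu=1}^r t_\nu$ involves all the variables. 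Hence the full integrand is dominated, up to a constant depending only on the $x_\nu$, by $\prod_{j=1}^r t_j^{{\rm Re}(s_j)-1}\,e^{-(t_1+\cdots+t_r)}$, whose integral over $(0,\infty)^r$ equals $\prod_j\Gamma({\rm Re}(s_j))<\infty$ because ${\rm Re}(s_j)>0$. This legitimizes the interchange, and the computation above then gives~\eqref{5-13}. The mechanism is the same as the one already employed in the single-variable computation in the proof of Theorem~\ref{Th-4-6}, so the argument runs in parallel with that one.
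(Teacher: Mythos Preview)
Your proposal is correct and follows essentially the same route as the paper's proof: start from the series in Definition~\ref{Def-5-1}, insert $n^{-s}=\Gamma(s)^{-1}\int_0^\infty t^{s-1}e^{-nt}\,dt$ for each factor, rewrite the exponent via $\sum_j t_j\sum_{\nu\le j}l_\nu=\sum_j l_j\sum_{\nu\ge j}t_\nu$, sum the resulting geometric series in each $l_j$, and simplify. The paper does not spell out the Tonelli justification you supply, so in that respect your argument is slightly more complete.
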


\begin{proof}
Substituting $n^{-s}=(1/\Gamma(s))\int_{0}^\infty t^{s-1}e^{-nt}dt$ into the second member of \eqref{5-2}, we have
\begin{align*}
F(\{x_j\};\{s_j\};r)&=\sum_{l_1,\ldots,l_r=1}^\infty \prod_{j=1}^{r}
\left(1-e^{-\sum_{\nu=j}^r x_\nu}\right)^{l_j-1}\frac{1}{\prod_{j=1}^{r}\Gamma(s_j)}\\
& \times \int_0^\infty\cdots\int_0^\infty \prod_{j=1}^{r}\left\{t_j^{s_j-1}\exp\left(-(\sum_{\nu=1}^{j}l_\nu)t_j\right)\right\}\prod_{j=1}^{r}dt_j.
\end{align*}
We see that the integrand on the right-hand side can be rewritten as 
$$\prod_{j=1}^{r}t_j^{s_j-1}\prod_{j=1}^{r}\exp\left(-l_j(\sum_{\nu=j}^{r}t_\nu)\right).$$
Hence we have 
\begin{align*}
F(\{x_j\};\{s_j\};r)&=\frac{1}{\prod_{j=1}^{r}\Gamma(s_j)\left(1-e^{-\sum_{\nu=j}^r x_\nu}\right)}
\int_0^\infty\cdots\int_0^\infty \prod_{j=1}^{r}t_j^{s_j-1}\\
& \times \sum_{l_1,\ldots,l_r=1}^\infty \prod_{j=1}^{r}\left(1-e^{-\sum_{\nu=j}^r x_\nu}\right)^{l_j}
e^{-l_j(\sum_{\nu=j}^{r}t_\nu)}\prod_{j=1}^{r}dt_j\\
&=\frac{1}{\prod_{j=1}^{r}\Gamma(s_j)}\int_0^\infty\cdots\int_0^\infty \prod_{j=1}^{r}t_j^{s_j-1}
\frac{e^{-\sum_{\nu=j}^{r}t_\nu}}{1-(1-e^{-\sum_{\nu=j}^{r}x_\nu})e^{-\sum_{\nu=j}^{r}t_\nu}}\prod_{j=1}^{r}dt_j\\
&=\frac{1}{\prod_{j=1}^{r}\Gamma(s_j)}\int_0^\infty\cdots\int_0^\infty \prod_{j=1}^{r}t_j^{s_j-1}
\frac{e^{\sum_{\nu=j}^{r}x_\nu}}{1-e^{\sum_{\nu=j}^{r}x_\nu}(1-e^{\sum_{\nu=j}^{r}t_\nu})}\prod_{j=1}^{r}dt_j.
\end{align*}
This completes the proof.
\end{proof}

\begin{lemma}\label{L-5-5}\ 
Let $z_1,\ldots,z_r\in \mathbb{C}$ and assume that $|z_j|$ $(1\leq j\leq r)$ are sufficiently small. Then
\begin{align}
& \prod_{j=1}^{r}\frac{z_je^{\sum_{\nu=j}^{r}x_\nu}}{1-z_je^{\sum_{\nu=j}^{r}x_\nu}}=\sum_{\kk=0}^\infty \Li_{\km}^\sh(z_1,\ldots,z_r)\frac{x_1^{k_1}\cdots x_r^{k_r}}{k_1!\cdots k_r!}. \label{5-14-0}
\end{align}
Set $z_j=1-e^{\sum_{\nu=j}^{r}t_\nu}$ $(1\leq j\leq r)$ for 
$(t_j)\in \Lambda_r$. Then
\begin{align}
& \prod_{j=1}^{r}\frac{e^{\sum_{\nu=j}^{r}x_\nu}\left(1-e^{\sum_{\nu=j}^{r}t_\nu}\right)}{1-e^{\sum_{\nu=j}^{r}x_\nu}\left(1-e^{\sum_{\nu=j}^{r}t_\nu}\right)} \label{5-14}\\
& \quad =\sum_{\kk=0}^\infty \Li_{\km}^\sh(1-e^{\sum_{\nu=1}^{r}t_\nu},\ldots,1-e^{t_r})\frac{x_1^{k_1}
\cdots x_r^{k_r}}{k_1!\cdots k_r!}.\notag
\end{align}
In particular, the case $t_1=\cdots=t_{r-1}=0$ and $t_r=t$ implies \eqref{4-10}.
\end{lemma}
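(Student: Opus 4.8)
The plan is to establish the master identity \eqref{5-14-0} first by a direct manipulation of the defining series of the multiple polylogarithm of $\sh$-type, and then to read off \eqref{5-14} and \eqref{4-10} as specializations. Writing $s_j=-k_j$ in the second expression in \eqref{5-1-2}, the polylogarithm with non-positive indices is the series (convergent for $|z_j|$ small)
\[
\Li_{\km}^{\sh}(z_1,\ldots,z_r)=\sum_{l_1,\ldots,l_r=1}^\infty z_1^{l_1}\cdots z_r^{l_r}\prod_{j=1}^{r}\Bigl(\sum_{\nu=1}^{j}l_\nu\Bigr)^{k_j}.
\]
Substituting this into the right-hand side of \eqref{5-14-0} and summing, for each fixed $(l_1,\ldots,l_r)$, over $k_j$ against $x_j^{k_j}/k_j!$ turns the $j$-th factor into the exponential $\exp\bigl(x_j\sum_{\nu=1}^{j}l_\nu\bigr)$.

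First I would interchange the summation over $(\kk)$ with the summation over $(l_1,\ldots,l_r)$, which reduces the right-hand side of \eqref{5-14-0} to $\sum_{l_1,\ldots,l_r\ge1}\prod_{j=1}^{r}z_j^{l_j}\exp\bigl(x_j\sum_{\nu=1}^{j}l_\nu\bigr)$. The key algebraic step is then to rearrange the total exponent by collecting the coefficient of each $l_\nu$: since $\sum_{j=1}^{r}x_j\sum_{\nu=1}^{j}l_\nu=\sum_{\nu=1}^{r}l_\nu\sum_{j=\nu}^{r}x_j$, the summand factors as $\prod_{\nu=1}^{r}\bigl(z_\nu e^{\sum_{\mu=\nu}^{r}x_\mu}\bigr)^{l_\nu}$. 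The multiple sum over $(l_1,\ldots,l_r)$ consequently splits into a product of $r$ independent geometric series, the $\nu$-th of which sums to $z_\nu e^{\sum_{\mu=\nu}^{r}x_\mu}/\bigl(1-z_\nu e^{\sum_{\mu=\nu}^{r}x_\mu}\bigr)$; this product is exactly the left-hand side of \eqref{5-14-0}.

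The main obstacle is the justification of the interchange of the two infinite summations, so I would carry this out with some care. For $|z_j|$ small and $(x_1,\ldots,x_r)$ in a bounded neighbourhood of the origin, replacing every quantity by its absolute value majorizes the double series by $\prod_{\nu=1}^{r}\sum_{l_\nu\ge1}\bigl(|z_\nu|\,e^{\sum_{\mu=\nu}^{r}|x_\mu|}\bigr)^{l_\nu}$, which converges as soon as each $|z_\nu|\,e^{\sum_{\mu=\nu}^{r}|x_\mu|}<1$, and this holds once the $|z_\nu|$ are small enough. Hence Fubini's theorem legitimizes the rearrangement, completing the proof of \eqref{5-14-0}.

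Finally, \eqref{5-14} follows by substituting $z_j=1-e^{\sum_{\nu=j}^{r}t_\nu}$ into \eqref{5-14-0}: for $(t_j)\in\Lambda_r$ the resulting values $z_j$ remain in the region where \eqref{5-14-0} is valid, and both sides transform directly into the two sides of \eqref{5-14}. For the last assertion, setting $t_1=\cdots=t_{r-1}=0$ and $t_r=t$ makes every partial sum $\sum_{\nu=j}^{r}t_\nu$ equal to $t$, so $z_1=\cdots=z_r=1-e^{t}$; the left-hand side of \eqref{5-14} then becomes that of \eqref{4-10}, while on the right-hand side the identity $\Li_{\km}^{\sh}(z,\ldots,z)=\Li_{\km}(z)$ recorded in Remark~\ref{Rem-5-1} collapses $\Li_{\km}^{\sh}(1-e^t,\ldots,1-e^t)$ to $\Li_{\km}(1-e^t)$, yielding \eqref{4-10}.
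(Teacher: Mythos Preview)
Your proof is correct and follows essentially the same route as the paper: expand $\Li_{\km}^{\sh}$ via \eqref{5-1-2}, sum the $k_j$'s into exponentials, reindex the exponent by swapping the order of summation in $\sum_j x_j\sum_{\nu\le j}l_\nu$, and collapse the resulting product of geometric series. The paper omits the absolute-convergence justification you supply and leaves the specializations \eqref{5-14} and \eqref{4-10} implicit, but the core argument is identical.
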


\begin{proof}
We have only to prove \eqref{5-14-0}. Actually we have 
\begin{align*}
& \sum_{\kk=0}^\infty \Li_{\km}^\sh(z_1,\ldots,z_r)\frac{x_1^{k_1}\cdots x_r^{k_r}}{k_1!\cdots k_r!}\\
& =\sum_{\kk=0}^\infty\sum_{m_1,\ldots,m_r=1}^\infty  \prod_{j=1}^{r}
\frac{((\sum_{\mu=1}^{j}m_\mu)x_j)^{k_j}}{k_j!}z^{m_j}\\
& =\sum_{m_1,\ldots,m_r=1}^\infty  \prod_{j=1}^{r} z_j^{m_j}\prod_{\mu=1}^{j}e^{m_\mu x_j}\\
& =\sum_{m_1,\ldots,m_r=1}^\infty  \prod_{j=1}^{r} \left(z_je^{\sum_{\nu=j}^{r}x_\nu}\right)^{m_j}=\prod_{j=1}^{r}\frac{z_je^{\sum_{\nu=j}^{r}x_\nu}}{1-z_je^{\sum_{\nu=j}^{r}x_\nu}}.
\end{align*}
\if0
When $r=1$, the right-hand side of \eqref{5-14} is equal to
\begin{align*}
& \sum_{k_1=0}^\infty \sum_{m_1=1}^\infty m_1^{k_1}z_1^{m_1}\frac{x_1^{k_1}}{k_1!}=\sum_{m_1=1}^\infty e^{m_1x_1}z_1^{m_1}=\frac{z_1e^{x_1}}{1-z_1e^{x_1}},
\end{align*}
which is the assertion.

Hence we assume $r(\geq 2)$. The right-hand side of \eqref{5-14} is equal to
\begin{align}
& \sum_{\kk=0}^\infty \sum_{m_1<\cdots<m_r} m_1^{k_1}\cdots m_r^{k_r} z_1^{m_1}z_2^{m_2-m_1}\cdots z_r^{m_{r}-m_{r-1}}\frac{x_1^{k_1}\cdots x_r^{k_r}}{k_1!\cdots k_r!}\notag\\
& =\sum_{k_1,\ldots,k_{r-1}=0}^\infty \sum_{m_1<\cdots<m_r} m_1^{k_1}\cdots m_{r-1}^{k_{r-1}}e^{m_rx_r}\notag\\
& \qquad \qquad \times z_1^{m_1}z_2^{m_2-m_1}\cdots z_r^{m_{r}-m_{r-1}}\frac{x_1^{k_1}\cdots x_{r-1}^{k_{r-1}}}{k_1!\cdots k_{r-1}!}.\label{5-15}
\end{align}
Therefore we aim to prove that the right-hand side of \eqref{5-15} is equal to the left-hand side of 
\eqref{5-14} by induction on $r\geq 2$. 

When $r=2$, the right-hand side of \eqref{5-15} is equal to
\begin{align*}
& \sum_{k_1=0}^\infty \sum_{m_1<m_2} m_1^{k_1}e^{m_2x_2} z_1^{m_1}z_2^{m_2-m_1}\frac{x_1^{k_1}}{k_1!}\\
& =\sum_{k_1=0}^\infty \sum_{m_1=1}^\infty m_1^{k_1}z_1^{m_1}z_2^{-m_1}\frac{(z_2e^{x_2})^{m_1+1}}{1-z_2e^{x_2}}\frac{x_1^{k_1}}{k_1!}\\
& =\frac{z_2e^{x_2}}{1-z_2e^{x_2}}\sum_{m_1=1}^\infty e^{m_1x_1}z_1^{m_1}e^{m_1x_2}\\
& =\frac{z_1e^{x_1+x_2}}{1-z_1e^{x_1+x_2}}\frac{z_2e^{x_2}}{1-z_2e^{x_2}},
\end{align*}
which is the assertion.

Next we consider the case of $r(\geq 3)$. The right-hand side of \eqref{5-15} is equal to
\begin{align*}
& \sum_{k_1,\ldots,k_{r-1}=0}^\infty \sum_{m_1<\cdots<m_{r-1}} m_1^{k_1}\cdots m_{r-1}^{k_{r-1}}
\frac{z_re^{x_r(m_{r-1}+1)}}{1-z_re^{x_r}}\\
& \qquad \times z_1^{m_1}z_2^{m_2-m_1}\cdots z_{r-1}^{m_{r-1}-m_{r-2}}\frac{x_1^{k_1}
\cdots x_{r-1}^{k_{r-1}}}{k_1!\cdots k_{r-1}!}\\
& =\frac{z_re^{x_r}}{1-z_re^{x_r}}\sum_{k_1,\ldots,k_{r-2}=0}^\infty \sum_{m_1<\cdots<m_{r-1}} 
m_1^{k_1}\cdots m_{r-2}^{k_{r-2}}e^{m_{r-1} (x_{r-1}+x_r)}\\
& \qquad \times z_1^{m_1}z_2^{m_2-m_1}\cdots z_{r-1}^{m_{r-1}-m_{r-2}}\frac{x_1^{k_1}
\cdots x_{r-2}^{k_{r-2}}}{k_1!\cdots k_{r-2}!}.
\end{align*}
Therefore, by induction hypothesis, this is equal to
\begin{align*}
& \frac{z_re^{x_r}}{1-z_re^{x_r}}\prod_{j=1}^{r-1}
\frac{z_je^{\sum_{\nu=j}^{r-2}x_\nu+(x_{r-1}+x_r)}}{1-z_je^{\sum_{\nu=j}^{r-2}x_\nu+(x_{r-1}+x_{r})}},
\end{align*}
which is the assertion, where the empty sum is $0$. 
\fi
Thus we have the assertion.
\end{proof}

Using these lemmas, we obtain the following. 

\begin{theorem}\label{T-5-9}
For $k_1,\ldots,k_r\in \mathbb{Z}_{\geq 0}$, 
\begin{equation}
\eett(\km;s_1,\ldots,s_r)=\bbm_{\kk}^{(s_1,\ldots,s_r)}. \label{5-17}
\end{equation}
\end{theorem}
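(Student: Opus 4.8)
The plan is to package the individual values $\eett(\km;s_1,\ldots,s_r)$ into a generating series in auxiliary variables $x_1,\ldots,x_r$ and to recognize that series as $F(x_1,\ldots,x_r;s_1,\ldots,s_r;r)$, whose Taylor coefficients are by definition the numbers $\bbm_{\kk}^{(s_1,\ldots,s_r)}$. Concretely, I would set
\[
\mathcal{G}(x_1,\ldots,x_r;s_1,\ldots,s_r)=\sum_{\kk\ge0}\eett(\km;s_1,\ldots,s_r)\,\frac{x_1^{k_1}\cdots x_r^{k_r}}{k_1!\cdots k_r!},
\]
substitute the integral representation of Definition~\ref{Def-Main-3}, and interchange the summation over $(\kk)$ with the $r$-fold integration in \eqref{5-9}.

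After the interchange, the quantity to be summed inside the integral is precisely
\[
\sum_{\kk\ge0}\Li_{\km}^\sh\bigl(1-e^{\sum_{\nu=1}^{r}t_\nu},\ldots,1-e^{t_r}\bigr)\,\frac{x_1^{k_1}\cdots x_r^{k_r}}{k_1!\cdots k_r!},
\]
which Lemma~\ref{L-5-5}, in the form \eqref{5-14} with $z_j=1-e^{\sum_{\nu=j}^{r}t_\nu}$, evaluates in closed form to $\prod_{j=1}^{r}\frac{e^{\sum_{\nu=j}^{r}x_\nu}(1-e^{\sum_{\nu=j}^{r}t_\nu})}{1-e^{\sum_{\nu=j}^{r}x_\nu}(1-e^{\sum_{\nu=j}^{r}t_\nu})}$. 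The numerator factors $1-e^{\sum_{\nu=j}^{r}t_\nu}$ then cancel the denominator $\prod_{j=1}^{r}(1-e^{\sum_{\nu=j}^{r}t_\nu})$ already present in the integrand of \eqref{5-9}, leaving exactly the integrand on the right-hand side of \eqref{5-13}. Hence Lemma~\ref{L-5-3} identifies $\mathcal{G}$ with $F(x_1,\ldots,x_r;s_1,\ldots,s_r;r)$.

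It then remains to compare Taylor coefficients. By Definition~\ref{Def-5-1} we have $F(x_1,\ldots,x_r;s_1,\ldots,s_r;r)=\sum_{\mm\ge0}\bbm_{\mm}^{(s_1,\ldots,s_r)}\frac{x_1^{m_1}\cdots x_r^{m_r}}{m_1!\cdots m_r!}$, so matching the coefficient of $\frac{x_1^{k_1}\cdots x_r^{k_r}}{k_1!\cdots k_r!}$ in the identity $\mathcal{G}=F$ yields \eqref{5-17} for $\re(s_j)>0$; since both sides are entire in $(s_1,\ldots,s_r)$ by Theorem~\ref{Th-Main-3} and Remark~\ref{Rem-5-2}, the equality extends to all of $\mathbb{C}^r$ by the identity theorem.

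The main obstacle is the rigorous justification of the term-by-term interchange of the infinite sum over $(\kk)$ with the iterated integral. I would handle this by restricting $(x_1,\ldots,x_r)$ to a small polydisc, so that the series \eqref{5-14} converges absolutely and uniformly for $(t_j)\in\Lambda_r$ as guaranteed by Remark~\ref{Rem-5-2}, and combine this with the absolute convergence of the integral secured by Lemma~\ref{L-5-4} (which already underlies the well-definedness of \eqref{5-9}) to invoke Fubini--Tonelli. The resulting identity of holomorphic functions on the polydisc is enough, since the coefficient comparison is a local statement at the origin in $(x_1,\ldots,x_r)$.
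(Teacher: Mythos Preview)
Your proposal is correct and follows essentially the same route as the paper: use Lemma~\ref{L-5-5} to identify the generating series of the integrands, apply Lemma~\ref{L-5-3} to match with $F(x_1,\ldots,x_r;s_1,\ldots,s_r;r)$, read off the Taylor coefficients via Definition~\ref{Def-5-1}, and then extend from $\re(s_j)>0$ to all of $\mathbb{C}^r$ by analyticity. The only cosmetic difference is that the paper starts from $F$ and expands into the sum over $(\kk)$, whereas you start from the sum $\mathcal{G}$ and collapse it to $F$; your added justification of the sum--integral interchange is a welcome refinement.
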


\begin{proof}
By Lemmas \ref{L-5-3} and \ref{L-5-5}, we have
\begin{align}
& F(x_1,\ldots,x_r;s_1,\ldots,s_r;r)\label{5-19}\\
& =\frac{1}{\prod_{j=1}^{r}\Gamma(s_j)}\int_0^\infty\cdots\int_0^\infty 
\prod_{j=1}^{r}\left\{t_j^{s_j-1}\frac{e^{\sum_{\nu=j}^{r}x_\nu}}{1-e^{\sum_{\nu=j}^{r}x_\nu}
(1-e^{\sum_{\nu=j}^{r}t_\nu})}\right\}\prod_{j=1}^{r}dt_j\notag\\
& =\frac{1}{\prod_{j=1}^{r}\Gamma(s_j)}\sum_{\kk=0}^\infty 
\bigg\{\int_0^\infty\cdots\int_0^\infty \prod_{j=1}^{r}t_j^{s_j-1}\notag\\
& \qquad \times \frac{\Li_{\km}^\sh(1-e^{\sum_{\nu=1}^{r}t_\nu},
\ldots,1-e^{t_r})}{\prod_{j=1}^{r}\left(1-e^{\sum_{\nu=j}^{r}t_\nu}\right)}
\prod_{j=1}^{r}dt_j\bigg\} \frac{x_1^{k_1}\cdots x_r^{k_r}}{k_1!\cdots k_r!} \notag
\end{align}
for ${\rm Re}(s_j)>0$ $(1\leq j\leq r)$. Combining \eqref{5-2}, \eqref{5-9} and \eqref{5-19}, 
we obtain \eqref{5-17} for ${\rm Re}(s_j)>0$ $(1\leq j\leq r)$, hence for all $(s_j)\in \mathbb{C}$, 
because both sides of \eqref{5-17} are entire functions (see Remark \ref{Rem-5-2}). 
\end{proof}

\begin{proof}[Proof of Theorem \ref{T-5-8}]
Setting $(s_1,\ldots,s_r)=(-m_1,\ldots,-m_r)$ in \eqref{5-17}, we obtain \eqref{5-18} from \eqref{5-10}. 
This completes the proof of Theorem \ref{T-5-8}.
\end{proof}

\begin{example}
We can easily see that
$$\Li_{-1,0}^\sh(z_1,z_2)=\frac{z_1z_2}{(1-z_1)^2(1-z_2)},\ \ \Li_{0,-1}^\sh(z_1,z_2)=
\frac{z_1z_2(2-z_1-z_2)}{(1-z_1)^2(1-z_2)^2}.$$
Hence we have
$$\bbm_{m,n}^{(-1,0)}=2^m3^n,\quad \bbm_{m,n}^{(0,-1)}=(2^m+1)3^n\quad (m,n\in \mathbb{Z}_{\geq 0}). $$
Therefore $\bbm_{0,1}^{(-1,0)}=\bbm_{1,0}^{(0,-1)}=3.$ Similarly we obtain, for example,
\begin{align*}
& \bbm_{1,0}^{(-1,-2)}=\bbm_{1,2}^{(-1,0)}=18, \ \bbm_{1,2}^{(-3,-1)}=\bbm_{3,1}^{(-1,-2)}=1820, \ \bbm_{2,2}^{(-2,-1)}=\bbm_{2,1}^{(-2,-2)}=1958.
\end{align*}

\end{example}

\begin{remark}
Hamahata and Masubuchi \cite[Corollary 10]{HM2007-1} showed the special case of \eqref{5-18}, namely 
$$\bbm_{0,\ldots,0,m}^{(0,\ldots,0,-k)}=\bbm_{0,\ldots,0,k}^{(0,\ldots,0,-m)}\quad (m,k\in \mathbb{Z}_{\geq 0})$$
(see Remark \ref{Rem-5-1}). On the other hand, Theorem \ref{Th-4-6} corresponds to the case 
$\aa=1\neq r$ except for $r=1$ (see Remark \ref{Rem-5-1}), hence is located in the outside of 
Theorem \ref{T-5-8}. Therefore, in \eqref{4-9}, another type of multi-poly-Bernoulli numbers appear. 
\end{remark}

\ 

{\bf Acknowledgments.} \ 
The authors would like to thank Yasushi Komori for pointing out a mistake in an earlier version
of this paper.

\

\bigskip

\ 

\ 

\begin{flushleft}
\begin{small}
{M. Kaneko}: 
{Faculty of Mathematics, 
Kyushu University, 
Motooka 744, Nishi-ku
Fukuoka 819-0395, 
Japan}

e-mail: {\tt mkaneko@math.kyushu-u.ac.jp}

\

{H. Tsumura}: 
{Department of Mathematics and Information Sciences, Tokyo Metropolitan University, 1-1, Minami-Ohsawa, Hachioji, Tokyo 192-0397 Japan}

e-mail: {\tt tsumura@tmu.ac.jp}
\end{small}
\end{flushleft}

\end{document}